 \def\ga{\gamma}
 \def\al{\alpha}
 \def\be{\beta}
 \def\de{\delta}
 \def\eps{\varepsilon}
 \def\ga{\gamma}
  \def\G{\Gamma}
 \def\la{\lambda}
 \def\EE{{\mathbf E}}
 \def\VV{{\mathbf V}}
  \def\G{\Gamma}
 \def\R{{\mathbb R}}
 \def\N{{\mathbb N}}
 \def\T{{\mathbb T}}
\def\< {\langle}
\def\> {\rangle}
 \def\oo{\mathrm o}
 \def\tt{{\mathrm t}}
\def\wtd {\widetilde}
 \DeclareMathOperator{\SC}{\mathcal S}
  \renewcommand{\proofname}{{\bf Proof:}}
 \theoremstyle{plain}
 \newtheorem{Thm}{Theorem}[section]
 \newtheorem{Lemma}[Thm]{\bf Lemma}
 \newtheorem{Corollary}[Thm]{\bf Corollary}
 \newtheorem{Theorem}[Thm]{\bf Theorem}
 \newtheorem{Proposition}[Thm]{\bf Proposition}
 \newtheorem{Notation}[Thm]{\bf Notation}
 \theoremstyle{definition}
 \newtheorem{Definition}[Thm]{\bf Definition}
 \theoremstyle{remark}
 \newtheorem{Remark}[Thm]{\bf Remark}
 \newtheoremstyle{Cl}
  {5pt}
  {3pt}
  {\sl}
  {}
  {\it}
  {:}
  {.5em}
  {}
 \theoremstyle{Cl}
 \def\begincproof{
                  \renewcommand{\proofname}{\it Proof:}
                  \begin{proof}
                 }
 \def\endcproof{
                \renewcommand{\qedsymbol}{$\diamondsuit$}
                \end{proof}
                \renewcommand{\qedsymbol}{\openbox}
                \renewcommand{\proofname}{\bf Proof:}
               }
 \renewcommand{\proofname}{{\bf Proof:}}
\begin{document}

\title[numerics]
{Numerical analysis of time--dependent  Hamilton--Jacobi equations on networks }

\author{Elisabetta Carlini\and Antonio Siconolfi}
\thanks{ Dipartimento di Matematica, Sapienza, Rome, 00185, Italy
(carlini@mat.uniroma1.it, siconolf@mat.uniroma1.it)}
\maketitle
\begin{abstract}
A new algorithm for time--dependent  Hamilton--Jacobi equations on networks, based on semi--Lagrangian scheme,  is proposed. It  is based on the   definition of viscosity solution for this kind of problems recently given in \cite{Siconolfi}. A thorough convergence analysis, not requiring weak semilimits, is provided. In particular, the check of the supersolution property at the vertices is performed through a dynamical technique which seems new.
The  scheme is efficient,  explicit, allows long time steps, and is suitable to be implemented in a parallel algorithm.
 We present some numerical tests, showing the advantage  in terms of computational cost over the  one proposed in \cite{carlini20}.

\end{abstract}
{\small
\noindent {\bf AMS-Subject Classification:}  
\keywords{49L25, 65M12, 35R02, 65M25 }
\section{Introductions}
In the last decade there has been a growing interest in the mathematical community for the study of Hamilton--Jacobi equation on networks, see \cite{ACCT13,CMS13,ImbertMonneauZidani2013,Schieborn13}. Motivations run the gamut: from the application to several models, notably  of traffic systems, smart grids and management of computer clusters, to the analysis of some theoretical issues arising from the non--traditional environment where the differential problems are posed.

In this respect, the central problem is to give the right definition of (viscosity) (sub/super) solutions in order to get comparison principles, even assuming the Hamiltonians on arcs of the network with different support to be unrelated. The difficulty is specifically to prescribe appropriate conditions at the vertices, where discontinuities occur.
From this viewpoint, it is clear why time--dependent equations are more demanding than the stationary ones,  for the former, in fact,  the discontinuity  interfaces are one--dimensional, of the form
\[\{(x,t), t \in [0, T]\} \qquad \hbox{with $x$ vertex}\]
while they become $0$--dimensional for the latter, in absence of time.

Understandably, the first complete contribution on the problem has come relatively late in the development of Hamilton--Jacobi problem on networks and junctions, see \cite{ImbertMonneau17}.  In this paper the authors gives the notion of flux limited  viscosity solution and overcome the difficulties at the vertices using special test functions acting simultaneously on  all the local Hamilton-Jacobi equation posed on the arcs adjacent a given vertex.

In \cite{Siconolfi} flux limiters are still used, but local Hamilton--Jacobi equations on any single arcs are tested separately, so that there is no need of special test functions. In addition, networks with a rather general geometry are allowed, the unique restriction being  the nonexistence of self--loops, namely arcs starting and ending at the same vertex.

This approach seems potentially beneficial for numerical approximation. The aim of this paper is to check  this potentiality. We succeed in constructing an algorithm with a simple structure and advantageous in terms of efficiency, with significant  gain in computational time  with respect to \cite{carlini20}. See comparison tests in Section  \ref{sec:test1}.

Literature is not crowded of papers regarding numerics for Hamilton--Jacobi  on networks. A first numerical contribution  for Eikonal equations  can be  found  in \cite{camilli2013approximation}.  Finite difference based schemes  for time--dependent  equations   have been proposed in \cite{costeseque2015convergent} and  \cite{CCM18}. In \cite{costeseque2015convergent}  a convergence analysis is provided,  and in \cite{GK19}  an error estimate is proved  for the scheme proposed in \cite{costeseque2015convergent}. In \cite{carlini20}, a semi--Lagrangian (SL, for short) scheme has been developed, along with a  convergence analysis and an error estimate. It  generalizes
the one  introduced in \cite{camilli2013approximation}, and enables discrete characteristics to cross the junctions, which  makes it unconditionally stable, and allows for large time steps.

Our scheme is based on a semi--Lagrangian approximation, and, besides being  efficient, is  explicit and allows large time steps.  In other terms  it does not requires the CFL condition, which is necessary for explicit finite differences schemes to be stable.  Furthermore, it is suitable to be implemented in a parallel algorithm, even if this possibility  has not been explored here.

The main idea is to work as much as possible with the local equations defined on each arc of the network, which are--one dimensional and  with continuous Hamiltonians.  Since this approach yields  multiple values at the vertices, one for each arc incident on it, a single value is selected through a simple  minimization procedure which takes also into account the flux limiter, see Section \ref{lagrange}. This is actually the unique point where the algorithm feels the network structure.

As  sometimes happens, the simplicity of the algorithm is somehow counterbalanced by some complexity in the convergence analysis. In our case, this specifically occurs in checking  the supersolution property for the limit function at the vertices. As a matter of fact, the definition of supersolution at the vertices  is a crucial point  where the geometry of the network enters  into play.

The check, see Section \ref{supersuper},   is performed through a dynamical technique exploiting in turn the fact, see Lemma \ref{precrux}, that the action functional for curves contained in an arc, starting  and ending at the same vertex, must be bounded from below by a term containing the flux limiter. Arguing by contradiction, it is shown that this estimate is violated constructing with a backward procedure a suitable discrete trajectory and then making it continuous via linear interpolation.

A final remark is that, thanks to an equiLipshitz property of approximated solutions, see Proposition \ref{retire},  we do not use weak semilimits.
\medskip

\subsection{Organization of the paper}
In section 2, we introduce the problem, and give the notion of solution.  Thereafter, we show that the Hamiltonian can be modified, without affecting the solution, in order the corresponding Lagrangian to be  infinite outside a given compact set, which is effective for the minimization problems in the semi--Lagrangian approximation.

Section 3 is devoted to a rough overview of the  algorithm, while in Section 4 we introduce the space--time discretizations we will use in our schemes, and an interpolation operator along with some properties.   In Section  5 we present the discrete evolution problems, and in Section 6 the corresponding convergence analysis when the discretization parameters become infinitesimal.

Sections 7, 8  contain the analysis of sub and supersolution properties of the functions attained at the limit in the approximation procedure. In Section 9 it is stated and proved the main result asserting that the limit function is actually unique and solves the Hamilton--Jacobi problem.

Finally, in Section 10 we present the numerical simulations with a graphical  part illustrating  the features of approximating solutions resulting from the application of the scheme.

There are three appendices: in the first one we describe and give the main properties of the modification of the Hamiltonian performed at the beginning of the approximation, in the second one we recall some facts about spaces convergence. The final one is just a quick user's guide for the interpretation of the  pictures in Section 10, which could be not intuitively immediate  for newcomers.

 \subsection{Notations}  The notations we use throughout the paper are quite standard. We just make precise that given two real numbers $a$, $b$, $a \vee b$, $a \wedge b$ stand for the maximum and the minimum, respectively, of them. The same applies for finitely many real numbers. If $\varphi(s,t)$ is a $C^1$  function in $(0,a) \times  (0,T)$, for some $a$, $T$ positive, with $s$, $t$ playing the role of space and space variables, respectively, we denote by $\psi_t$ the time and by $\psi'$ the space derivative. Given a function $f$ defined in an Euclidean space $\R^D$ and $C$ closed subset of $\R^D$, we say that $f$ is of class $C^1$ in $C$ if it is $C^1$ in some open neighborhood of $C$.  If $f$ is Lipschitz continuous in some $B \subset \R^D$, we denote by ${\mathrm {Lip}} f$ the best Lipschitz constant of $f$ in $B$.

\medskip

\subsection{Networks}   A {\it network}  $\G \subset \R^N$ can be understood as a  piecewise regular $1$-- dimensional manifold. It has  the form
 \[ \G= \bigcup_{\gamma \in \EE} \gamma([0,|\ga|]), \]
 where $\EE$ is a finite collection of regular simple curves, called {\it arcs } of the network, with Euclidean length $|\gamma|$, parametrized by arc length in $[0, |\ga|]$. Note that both  $\ga$  and $\gamma^{-1}:\gamma([0,|\ga|])\to [0,|\ga|]$ are Lipschitz continuous with Lipschitz constant $1$.  If $\ga \in \EE$  the reversed arc
 \[\widetilde \ga(t)= \ga(|\ga|-t)\]
 belongs to $\EE$ as well.
The key condition  on the family $\EE$ is  that arcs with different support can intersect only at the initial or final points, called {\it vertices}. The set of vertices contains finitely many elements and is denoted by $\VV$. We further denote by $\oo(\ga)$ and $\tt(\ga)$, respectively, the initial and final vertex of an arc $\ga$. It is apparent that
\[\oo(\wtd \ga)= \tt(\ga) \qquad\hbox{and} \qquad \tt(\wtd \ga)= \oo(\ga).\]
An arc is   said to be {\it incident} on the vertices it links. An {\it orientation} of $\Gamma$ is a subset $ \EE^+ \subset \EE$ containing  exactly one arc in each pair $\{\ga, \widetilde \ga\}$.  We set
\[\EE^+_x= \{\ga \in \EE^+ \mid \ga \;\hbox{incident on $x$}\} \qquad\hbox{for any $x \in \Gamma$.}\]
Note that $\EE^+_x$ is a singleton if $x \in \Gamma \setminus \VV$, while it contains  in general more than one element   if $x$ is instead a vertex.

The network $\G$ inherits by the ambient space $\R^N$ a metric structure. We assume $\Gamma$ to be {\it connected}, in the sense that  any two vertices are linked by a finite sequence $\{\ga_1, \cdots, \ga_m\}$ of concatenated arcs, namely arcs satisfying
\[\tt  (\ga_j)=\oo (\ga_{j+1}) \qquad\hbox{for any $j= 1, \cdots, M-1$.}\]
The geometry of the networks we consider  is general, the unique limitation being that no {\it self--loops} are admitted, namely arcs $\ga$ with $\oo(\ga)=\tt(\ga)$. This is due to the fact that so far the theory of time--dependent Hamilton--Jacobi equation on networks, that we use, has been developed in absence of self--loops. We do believe that an extension to general networks is possible, but not still available.\\

\section{Basic definitions and assumptions}\label{assu}

\subsection{Hamiltonians on $\Gamma$} We consider an  Hamiltonian on $\Gamma$, namely   a family of Hamiltonians $\wtd H_\ga: [0,|\ga|] \times \R \to \R$ indexed by arcs, which are totally unrelated on arcs with different support, and satisfy   the compatibility condition
\[\wtd H_{\widetilde \ga}(s,\mu)= H_\ga(|\ga|-s,-\mu) \qquad\hbox{for any $\ga$.}\]
We assume  the $\wtd H_\ga$'s to be, for any $\ga$
\begin{itemize}
    \item[{\bf(H1)}]  continuous in both arguments;
    \item[{\bf(H2)}]  convex   in the momentum variable;
    \item[{\bf(H3)}]  superlinear  in the momentum variable, uniformly  in $s$;
    \item[{\bf(H4)}] with  $s \mapsto \wtd H_\ga(s,\mu)$  Lipschitz  continuous in $[0,|\ga|]$ for any $\mu \in \R$.
        \end{itemize}

\smallskip

Taking into account that $\mu \mapsto \wtd H_\ga(s,\mu)$ is locally Lipschitz continuous for any $s$ because of {\bf (H2)}, we deduce from {\bf (H4)} that $\wtd H_\ga$ is locally Lipschitz continuous in $[0,|\ga|] \times \R$, see, for instance, the corollary of Proposition 2.2.6 in \cite{Clarke75}.
Thanks to {\bf (H3)}, a finite Lagrangian $\wtd L_\ga$ can be defined for any $\ga$ through the Fenchel duality formula
\[\wtd L_\ga(s,\la)= \max_{\mu \in \R} \mu \, \la - \wtd H_\ga(s,\mu) \qquad\hbox{for any $s \in [0,|\ga|]$, $\la \in \R$.}\]
Note that {\bf (H1)}--{\bf (H4)} hold with $\wtd L_\ga$ in place of $\wtd H_\ga$, for any $\ga$.
\smallskip
We set
\[\wtd c_\ga= - \max_s \, \min_\mu  \wtd H_\ga(s,\mu) \qquad\hbox{for any arc $\ga$,}\]
and define {\em flux limiter } for the family $\wtd H_\ga$ any function $x \mapsto c_x$ from $\VV$ to $\R$ satisfying
\[c_x  \leq \min_{\ga\in \EE^+_x}\wtd  c_\ga \qquad\hbox{for $x \in \VV$.}\]

We proceed recalling some viscosity solution theory terminology. Given $T >0$ and a continuous function $u: (0,|\ga|) \times (0,T) \to \R$, we call {\it supertangents} (resp.  {\it subtangents}) to $u$ at $(s_0,t_0)$  the viscosity test functions from above (resp. below).

Given a closed subset $C \subset [0,|\ga|] \times [0,T]$, we say that a supertangent  (resp. subtangent) $\varphi$ to $u$ at $(s_0,t_0) \in \partial C$ is {\it constrained to $C$} if $(s_0,t_0)$ is maximizer (resp. a minimizer) of $u - \varphi$ in a neighborhood of $(s_0,t_0)$ intersected with $C$.\\

Given a positive time $T >0$, we consider for any $\ga$ the time--dependent equation
\begin{equation}\label{HJg}  \tag{HJ$_\ga$}
    u_t + \wtd H_\ga(s,u')=0 \qquad\hbox{in $ (0,|\ga|) \times (0,T)$,}
\end{equation}
 a Lipschitz continuous  initial datum $g: \Gamma \to \R$ at $t=0$ plus a flux limiter $c_x$. We call (HJ) the  problem in its totality.
By {\it solution} of (HJ)  we mean a continuous function $ u: \Gamma \times [0,T) \to \R$  such that $u \circ \ga$ is a viscosity solution to  \eqref{HJg} in $(0,|\ga|) \times (0,T)$, for any $\ga$,  is equal to $g \circ \ga$ at $t=0$, and satisfies the following conditions at any $x \in \VV$, $t_0 \in (0,T]$ :
\smallskip

\begin{Definition}\label{defsol}  \hfill
\begin{itemize}
\item[{\bf (i)}]  if
     \[ \frac d{dt} \psi (t_0) < c_x\]
     for some $C^1$ subtangent $\psi$ to $ u(x,\cdot)$ at $t_0 >0$, then
there is  an arc $\ga \in \EE^+_x$  incident on $x$
such that  all the $C^1$  subtangents   $\varphi$,
constrained  to $[0,|\ga|] \times [0,T]$, to
$v \circ \ga$  at  $(\ga^{-1}(x),t_0)$   satisfy
     \[   \varphi_t(\ga^{-1}(x),t_0) +  \wtd H_\ga(\ga^{-1}(x), \varphi'(\ga^{-1}(x),t_0))  \geq 0.\]
\item[{\bf (ii)}]  all $C^1$ supertangents $\psi(t)$ to $u(x,\cdot)$ at $t_0 >0$ satisfy
     \[  \frac d{dt} \psi(t_0) \leq c_x.\]
\end{itemize}
\end{Definition}
Note  that  the arcs $\ga$,  with $\ga \in \EE^+_x$, where   condition {\bf (i)} holds true,  change in function of the time.

We further say that a continuous function $ u$ is {\it subsolution} (resp. {\it supersolution}) of (HJ) if $  u \circ \ga(\cdot,0) \leq g \circ \ga$ (resp. $  u \circ \ga (\cdot,0) \geq g \circ \ga$),   $ u \circ \ga$ is subsolution(resp. supersolution) of \eqref{HJg} for any $\ga$ and condition {\bf (ii)} (resp. {\bf (i)}) holds true at any vertex $x$ for $t_0 >0$.

According to the results of \cite{Siconolfi}, \cite{PozzaSiconolfi}, we have

\begin{Theorem}\label{exun}  There exists one and only one solution of (HJ), and it is in addition Lipschitz continuous in $\G \times [0,T]$.
\end{Theorem}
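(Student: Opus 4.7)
The plan is to split the statement into a comparison principle, which yields uniqueness, existence via Perron's method, and Lipschitz regularity as an a posteriori estimate. Throughout, the main difficulty is the interplay between the interior PDE on each arc and the asymmetric vertex conditions \textbf{(i)}--\textbf{(ii)}, which are calibrated precisely so that the flux limiter $c_x$ fills the gap needed to compare sub-- and supersolutions across the junctions.

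For comparison, let $u$ be a subsolution and $v$ a supersolution with $u(\cdot,0)\leq v(\cdot,0)$, and argue by contradiction assuming $\max_{\G\times[0,T]}(u-v)>0$. Following the usual recipe I would subtract $\eta/(T-t)$ from $v$ to push the maximum away from $t=T$, exclude $t=0$ by the initial ordering, and then discuss where the maximum of $u-v-\eta/(T-t)$ sits. If it lies at an interior point $(s_0,t_0)$ of an arc $\ga$, a classical parabolic doubling of variables on $(0,|\ga|)\times(0,T)$, based on continuity, convexity and superlinearity of $\wtd H_\ga$, delivers the contradiction. The delicate case is a maximum located at a vertex $x$ at time $t_0>0$. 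Here I would pick a common $C^1$ time--tangent $\psi$, acting as a supertangent to $u(x,\cdot)$ and a subtangent to $v(x,\cdot)$ at $t_0$; condition \textbf{(ii)} applied to $u$ gives $\psi'(t_0)\leq c_x$. Condition \textbf{(i)} applied to $v$ then forces $\psi'(t_0)\geq c_x$, whence $\psi'(t_0)=c_x$, and the spatial Hamilton--Jacobi inequality along the distinguished arc $\ga\in\EE^+_x$ supplied by \textbf{(i)}, tested against subtangents constrained to $[0,|\ga|]\times[0,T]$, furnishes the contradiction through a one--sided doubling of variables in the arc parameter. This step, which glues interior parabolic arguments to the vertex geometry, is the main obstacle.

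For existence I would use Perron's method. The class
\[\mathcal S=\{w\in C(\G\times[0,T]):w\ \text{subsolution of (HJ)},\ w(\cdot,0)\leq g\}\]
is non--empty, since $w(x,t)=g(x)-Kt$ is a subsolution for $K$ large enough, and by the just--established comparison principle it is bounded above by $g(x)+Kt$. Set $u(x,t)=\sup_{w\in\mathcal S}w(x,t)$. Standard Perron reasoning, performed arc by arc for the interior PDE and vertex by vertex for conditions \textbf{(i)}--\textbf{(ii)}, shows that the upper semicontinuous envelope $u^*$ still lies in $\mathcal S$ (the inequalities on each arc and condition \textbf{(ii)} are stable under sup), while the lower semicontinuous envelope $u_*$ must be a supersolution: otherwise a standard local bump, supported in an arc--neighborhood of the distinguished $\ga$ provided by the existential form of \textbf{(i)}, produces a strictly larger element of $\mathcal S$, contradicting maximality. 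Comparison then gives $u^*=u_*=u$, so $u$ is the required solution.

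Lipschitz regularity follows a posteriori. The trivial barriers $g(x)\pm Kt$, together with the comparison principle applied to $u(\cdot,\cdot)$ and $u(\cdot,\cdot+h)$, yield Lipschitz dependence in $t$ uniform in $x$, the constant being controlled by $K=\sup_\ga\sup_{|\mu|\leq \mathrm{Lip}\,g}|\wtd H_\ga(\cdot,\mu)|$, finite by \textbf{(H1)} and \textbf{(H4)}. Lipschitz dependence on the arc parameter on each $\ga$ comes from the interior gradient bound inherited from the Lipschitz initial datum through superlinearity \textbf{(H3)}, and the pieces glue into a global Lipschitz estimate because $\G$ is compact and $u$ is continuous across the vertices. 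The only genuinely hard ingredient throughout is the vertex case of the comparison principle.
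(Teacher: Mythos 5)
First, a point of order: the paper offers no proof of Theorem \ref{exun} at all --- it is imported from \cite{Siconolfi} and \cite{PozzaSiconolfi} (``According to the results of \ldots, we have''), so there is no in--paper argument to measure your proposal against. Judged on its own terms, your overall architecture --- comparison principle, Perron's method, barriers $g \pm Kt$ for the time regularity --- is the standard one and is broadly the route of the cited references (which also develop a Lax--Oleinik--type representation on the network, cf.\ the use of Proposition 4.2 of \cite{PozzaSiconolfi} in Lemma \ref{precrux}).

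There is, however, a genuine gap exactly at the step you yourself identify as the crux, the vertex case of comparison, and it stems from a misreading of Definition \ref{defsol} {\bf (i)}. That condition is an implication: \emph{if} some $C^1$ subtangent $\psi$ to the supersolution at $(x,t_0)$ satisfies $\frac{d}{dt}\psi(t_0) < c_x$, \emph{then} there is an arc $\ga \in \EE^+_x$ on which all constrained subtangents satisfy the spatial supersolution inequality. It does not assert that subtangents to a supersolution have slope $\geq c_x$. Your deduction ``condition {\bf (i)} applied to $v$ then forces $\psi'(t_0)\geq c_x$, whence $\psi'(t_0)=c_x$'' is therefore unjustified and, worse, self--defeating: if the common tangent had slope exactly $c_x$, the hypothesis of {\bf (i)} would never be met and you would extract no spatial information at the vertex. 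The correct mechanism runs the other way: after doubling the time variable and inserting the $\eta/(T-t)$ correction, condition {\bf (ii)} for the subsolution $u$ gives the common tangent a slope $\leq c_x - \eta/(T-t_0)^2 < c_x$, and it is precisely this \emph{strict} inequality that triggers the hypothesis of {\bf (i)} for $v$ and hands you the distinguished arc $\ga$. Even then, one must compare the constrained supersolution inequality at $s=\ga^{-1}(x)$ with the subsolution property of $u\circ\ga$, which by definition holds only in the open rectangle $(0,|\ga|)\times(0,T)$; extending the subsolution inequality up to the lateral boundary (available for convex coercive Hamiltonians and Lipschitz subsolutions, essentially via the variational inequality of Lemma \ref{fiori}) is an additional lemma your sketch does not supply. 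Until the vertex step is repaired along these lines, the comparison principle --- and with it uniqueness, the Perron construction, and the barrier argument for Lipschitz regularity, all of which lean on it --- is not established.
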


The numerical approximation of such a solution  is the main aim of the paper.

\medskip

\subsection{Modified problem}

Actually, we will not directly perform the numerical approximation of (HJ), but instead of a problem   with modified Hamiltonians $H_\ga$'s obtained from the $\wtd H_\ga$'s through the procedure described in Appendix \ref{penalized}, for a suitable preliminary choice of a compact interval $I$ of the momentum variable $\mu$ with
\begin{equation}\label{mod1}
  H_\ga(s,\mu) = \wtd H_\ga(s,\mu) \qquad\hbox{for any $\ga$, $s \in [0,|\ga|]$, $\mu \in I$.}
\end{equation}
The rationale behind this change is   that the solution of the modified problem coincide with that of the original one for a suitable choice of $I$, see the forthcoming Theorem \ref{modteo}. The advantage  is that a positive constant $\be_0$, depending on $I$,  can be found such that  the modified Lagrangians $L_\ga$ are infinite when $\la$ is outside $[-\be_0,\be_0]$ for all $s \in [0,|\ga|]$, $\ga$, and globally Lipschitz continuous in $[-\be_0,\be_0]$. This in turn allows handling smoothly the minimization problems involving $L_\ga$ in the forthcoming approximation schemes.

To summarize, the relevant properties of the $H_\ga$/$L_\ga$'s, see Appendix \ref{penalized}, are:
\begin{itemize}
    \item[{\bf(P1)}]  continuity in both arguments;
    \item[{\bf(P2)}]  coercivity   in the second variable, uniformly  in $s$;
    \item[{\bf(P3)}]  $H_\ga(s,\mu) \leq \wtd H_\ga(s,\mu)$ and  $L_\ga(s,\la) \geq \wtd L_\ga(s,\la)$ for any $\la$, $\mu$, $s$;
    \item[{\bf(P4)}] there exists $\be_0$, depending on $I$, see \eqref{mod1}, with $ L_\ga(s,\la) \equiv + \infty$ in $[0,|\ga|] \times \big ( \R \setminus [-\be_0,\be_0] \big )$, $ L_\ga$ is Lipschitz continuous in $[0,|\ga|] \times [-\be_0,\be_0]$.
        \end{itemize}
 We consider  the problem
\begin{equation}\label{HJgm}  \tag{HJ$_\ga$mod}
u_t +  H_\ga(s,u')=0 \qquad\hbox{in $ (0,|\ga|) \times (0,T)$,}
\end{equation}
in place of \eqref{HJg}, plus the same initial datum $g$ and the flux limiter $c_x$ appearing in (HJ). The definition of (sub/super) solution is given as we did for (HJ), see \eqref{defsol}.  As already announced, we are going to prove  that the solution  of (HJmod) coincides with that of (HJ) for an appropriate choice of the interval $I$ satisfying \eqref{mod1}. Note that by {\bf(P3)}
\[c_\ga = - \max_s \, \min_\mu  H_\ga(s,\mu) \geq - \max_s \, \min_\mu  \wtd H_\ga(s,\mu) = \wtd c_\ga\]
for any arc $\ga$, and consequently
\[c_x \leq c_\ga.\]
This shows that
\begin{Lemma} The flux limiter $c_x$ appearing in (HJ) is admissible for the $H_\ga$'s.
\end{Lemma}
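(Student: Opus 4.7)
The plan is to unwind the definition of admissibility and then chase the pointwise inequality in property \textbf{(P3)} through the $\min$/$\max$/sign-change that defines $c_\ga$. Admissibility for the family $\{H_\ga\}$ means exactly
\[
c_x \leq \min_{\ga \in \EE^+_x} c_\ga \qquad \text{for every } x \in \VV,
\]
with $c_\ga := -\max_s \min_\mu H_\ga(s,\mu)$. Since $c_x$ was given as a flux limiter for the original family $\{\wtd H_\ga\}$, we already have $c_x \leq \min_{\ga \in \EE^+_x} \wtd c_\ga$, so it suffices to show $\wtd c_\ga \leq c_\ga$ for each arc $\ga$ incident on $x$.

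To establish this, I would start from \textbf{(P3)}, which gives $H_\ga(s,\mu) \leq \wtd H_\ga(s,\mu)$ for all $s \in [0,|\ga|]$ and $\mu \in \R$. Taking the infimum in $\mu$ preserves the inequality, and so does the subsequent supremum in $s$; hence
\[
\max_s \min_\mu H_\ga(s,\mu) \leq \max_s \min_\mu \wtd H_\ga(s,\mu).
\]
Multiplying by $-1$ reverses the inequality and yields $c_\ga \geq \wtd c_\ga$, which is the string of inequalities already displayed in the excerpt just before the lemma. Combining with the admissibility of $c_x$ for the original Hamiltonian gives
\[
c_x \leq \min_{\ga \in \EE^+_x} \wtd c_\ga \leq \min_{\ga \in \EE^+_x} c_\ga,
\]
which is the required admissibility for $\{H_\ga\}$.

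There is no genuine obstacle: the argument is a one--line monotonicity chase, and the only thing to verify is that the $\min$ and $\max$ are actually attained (so that the definition of $c_\ga$ makes sense) — this is immediate from \textbf{(P1)}, \textbf{(P2)} (continuity plus coercivity in $\mu$ on the compact interval $[0,|\ga|]$ in $s$). So the proof will be just the two displayed inequalities above, together with a sentence identifying them with the definition of admissibility.
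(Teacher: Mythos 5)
Your proof is correct and follows exactly the paper's argument: the paper establishes the lemma by the same monotonicity chase, noting that \textbf{(P3)} gives $c_\ga = -\max_s\min_\mu H_\ga(s,\mu) \geq -\max_s\min_\mu \wtd H_\ga(s,\mu) = \wtd c_\ga \geq c_x$. No differences worth noting.
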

We further have, see \cite{Siconolfi}

\begin{Theorem}\label{exunmod}  There exists one and only one solution $u^0$  of (HJmod), and it is in addition Lipschitz continuous in $[0,|\ga|] \times [0,T]$.
\end{Theorem}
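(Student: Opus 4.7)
The plan is to reduce Theorem~\ref{exunmod} to the same general existence--uniqueness result of \cite{Siconolfi} that underlies Theorem~\ref{exun}, by checking that replacing $\wtd H_\ga$ with $H_\ga$ leaves all the relevant structural hypotheses in force. In other words, I would view (HJmod) as a problem of exactly the type treated in \cite{Siconolfi}, with modified but structurally analogous data.

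First, I would verify that each $H_\ga$ still obeys conditions (H1)--(H4) on $[0,|\ga|]\times\R$. Continuity (H1) is exactly (P1). Superlinearity in the momentum variable uniformly in $s$, which is (H3), follows from (P2) together with (P4): the latter says that $L_\ga(s,\la)=+\infty$ whenever $|\la|>\be_0$, uniformly in $s$, which by Fenchel duality forces $H_\ga(s,\mu)/|\mu|\to +\infty$ uniformly in $s$. The two hypotheses not explicitly listed among (P1)--(P4), namely convexity in the momentum variable (H2) and Lipschitz continuity in $s$ (H4), have to be read off from the explicit construction of $H_\ga$ carried out in Appendix~\ref{penalized}; one expects both to be preserved by that construction, since it is typically designed to match $\wtd H_\ga$ on a compact $\mu$-interval and to redefine it outside by an operation that respects both convexity and the $s$-regularity.

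Once (H1)--(H4) are in place for the modified family, the lemma immediately preceding the statement shows that $c_x$ is also an admissible flux limiter for $\{H_\ga\}$. All the hypotheses needed to invoke the existence--uniqueness result of \cite{Siconolfi}, \cite{PozzaSiconolfi} are therefore satisfied, so that result delivers a unique continuous solution $u^0$ of (HJmod). Lipschitz continuity comes out of the same machinery, the Lipschitz constant depending only on ${\mathrm{Lip}}\, g$, on the structural constants of the $H_\ga$'s (essentially the coercivity modulus and the $s$-Lipschitz constants coming from (H4)), and on the flux limiters, exactly as in the proof of Theorem~\ref{exun}.

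The main, and essentially only, obstacle is the verification of (H2) and (H4) for the modified Hamiltonians, which is a matter of inspecting the explicit modification described in Appendix~\ref{penalized}; if that modification is an inf/sup-convolution-type envelope or a convex redefinition outside the given $\mu$-interval, both properties pass to $H_\ga$ automatically. After this check the statement follows by direct citation, with no new PDE-theoretic input beyond what is already used in Theorem~\ref{exun}.
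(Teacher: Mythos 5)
Your overall strategy --- verify that the modified family still satisfies the structural hypotheses and the flux--limiter admissibility, then cite the well--posedness theorem of \cite{Siconolfi} --- is exactly what the paper does: the paper offers no proof at all beyond the citation, preceded only by the short lemma showing $c_x \leq c_\ga$. So in spirit you are on the paper's route.

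However, your verification of {\bf (H3)} contains a concrete error, and it is not a cosmetic one. You claim that {\bf (P4)} (namely $L_\ga(s,\la)\equiv+\infty$ for $|\la|>\be_0$) forces, by Fenchel duality, $H_\ga(s,\mu)/|\mu|\to+\infty$. The duality goes the other way: if $L_\ga(s,\cdot)$ is $+\infty$ outside $[-\be_0,\be_0]$, then
\[
H_\ga(s,\mu)=\sup_{|\la|\leq \be_0}\big(\mu\,\la-L_\ga(s,\la)\big)\leq \be_0\,|\mu|+\max_{|\la|\leq\be_0}|L_\ga(s,\la)|,
\]
so $H_\ga$ has \emph{at most linear} growth in $\mu$. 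A Lagrangian with compact effective domain is dual to a Hamiltonian that is globally Lipschitz in the momentum, not to a superlinear one; superlinear Hamiltonians are precisely those whose conjugates are finite everywhere. Indeed Appendix \ref{penalized} constructs $H_\ga(s,\cdot)$ explicitly to be \emph{linear at infinity} with slopes $\pm\be_0$ --- defeating superlinearity is the entire purpose of the modification, since it is what confines the minimization in the semi--Lagrangian operator to the compact control set $[-\be_0,\be_0]$. Consequently {\bf (H3)} genuinely fails for the $H_\ga$'s, and your plan of checking {\bf (H1)}--{\bf (H4)} verbatim and invoking the same statement as in Theorem \ref{exun} cannot be carried out. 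What the paper actually relies on is the version of the well--posedness result in \cite{Siconolfi} valid under convexity plus \emph{coercivity} ({\bf (P2)}) rather than superlinearity; your argument should be repaired by citing that weaker-hypothesis form of the theorem (convexity and Lipschitz dependence on $s$ are indeed preserved by the convex-envelope construction of Appendix \ref{penalized}, as you anticipate), not by trying to recover {\bf (H3)}.
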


To provide an  estimate of the Lipschitz constants of $u^0 \circ \ga$, see \cite{Siconolfi},  we define
\[M_0= \max_\ga \, \min \{m \mid  H_\ga(s,(g\circ \ga)') \leq m \;\hbox{a.e.}\; \forall \, s \in [0,|\ga|]\}.\]
In other terms $M_0$ is the smallest constant for which $g\circ \ga$ is subsolution to $ H_\ga= M_0$ for any $\ga$. We have, see \cite{Siconolfi}
\begin{Proposition}\label{modpro} For any $s \in [0,|\ga|]$, any arc $\ga$
\[ {\mathrm {Lip}}(u^0 \circ \ga)(s,\cdot) \leq  \max \{|\mu| \mid H_\ga(s,\mu) \leq \big (M_0 \vee \max_{x \in \VV} |c_x| \big ) \quad  s \in [0,|\ga|] \}\]
\end{Proposition}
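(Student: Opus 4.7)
The plan is to first prove a uniform time-Lipschitz estimate for $u^0$ with constant $M := M_0 \vee \max_{x\in\VV}|c_x|$ by comparison with affine-in-time barriers, and then to convert this estimate into the claimed momentum bound by reading off the equation pointwise.

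For the time bound I take $\underline u(x,t) := g(x) - Mt$ and $\overline u(x,t) := g(x) + Mt$. On each arc the subsolution inequality for $\underline u$ reduces to $H_\ga(s,(g\circ\ga)')\le M$, which is immediate from the definition of $M_0$; the supersolution inequality for $\overline u$ reduces to the lower bound $H_\ga(s,(g\circ\ga)')\ge -M$, which must be extracted from the modification procedure of Appendix \ref{penalized} (see properties {\bf(P1)}--{\bf(P4)}). At a vertex $x$, every $C^1$ supertangent to $\underline u(x,\cdot)$ (resp.\ subtangent to $\overline u(x,\cdot)$) has time derivative $-M$ (resp.\ $M$); since $M\ge \max_x|c_x|$ one has $-M\le c_x\le M$, and so conditions {\bf(ii)} and {\bf(i)} of Definition \ref{defsol} hold, the latter vacuously. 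Theorem \ref{exunmod} together with the comparison principle of \cite{Siconolfi,PozzaSiconolfi} yields $\underline u \le u^0\le \overline u$, and hence $|u^0(x,t)-g(x)|\le Mt$. Autonomy of (HJmod) coupled with $L^\infty$-contraction (a consequence of the comparison principle) then produces
\[
|u^0(x,t_2) - u^0(x,t_1)| \le M\,|t_2 - t_1| \qquad \forall x\in\G,\ t_1,t_2\in[0,T],
\]
by applying the initial-time bound to the problem restarted at $t_1$.

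For the momentum bound I invoke \eqref{HJgm} pointwise. Fix an arc $\ga$ and an interior point $(s,t)\in(0,|\ga|)\times(0,T)$ at which $u^0\circ\ga$ is differentiable; by Rademacher's theorem, a.e.\ such point works. The equation gives
\[
H_\ga\bigl(s,(u^0\circ\ga)'(s,t)\bigr) = -(u^0\circ\ga)_t(s,t),
\]
so the previous step yields $H_\ga(s,(u^0\circ\ga)'(s,t))\le M$. By coercivity (property {\bf(P2)}) the sublevel set $\{\mu : H_\ga(s,\mu)\le M\}$ is a bounded interval, hence
\[
\bigl|(u^0\circ\ga)'(s,t)\bigr| \le \max\{|\mu| : H_\ga(s,\mu)\le M\} \quad \text{for a.e.\ }(s,t),
\]
and continuity in $s$ of the right-hand side together with Lipschitz regularity of $u^0\circ\ga$ in $s$ upgrades this to the pointwise-in-$s$ bound claimed in the statement.

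The main technical obstacle will be verifying the lower bound $H_\ga(s,(g\circ\ga)')\ge -M$ that makes $\overline u = g+Mt$ a supersolution on each arc, since the definition of $M_0$ only provides an upper bound. The Fenchel identity $H_\ga(s,\mu)=\sup_\la(\mu\la - L_\ga(s,\la))$ combined with property {\bf(P4)} (compact support and Lipschitz regularity of $L_\ga$ in the $\la$ variable, with interval $[-\be_0,\be_0]$) yields $H_\ga(s,\mu)\ge -L_\ga(s,0)$ uniformly; the interval $I$ appearing in the modification must be calibrated so that the ensuing lower bound is absorbed into the envelope $M_0\vee \max_x|c_x|$. Once this structural lower bound on $H_\ga$ is in hand, the transfer from time-Lipschitz to momentum bound through the equation is a standard consequence of coercivity.
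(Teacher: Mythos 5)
The paper itself offers no proof of Proposition \ref{modpro} --- it is quoted from \cite{Siconolfi} --- so your argument can only be judged on its own terms. The overall route (a time-regularity estimate for $u^0$ converted into a momentum bound by reading \eqref{HJgm} at points of differentiability) is the right one, and the second half of your proposal --- Rademacher, coercivity of $H_\ga$, passage from an a.e.\ gradient bound to the Lipschitz bound --- is fine. The gap sits in the first half, exactly where you suspected, but it cannot be repaired in the way you suggest: the upper barrier $\overline u = g+Mt$ with $M=M_0\vee\max_{x\in\VV}|c_x|$ is \emph{not} a supersolution in general, and the two-sided estimate $|u^0(x,t)-g(x)|\le Mt$ is simply false. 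Take $g\equiv 0$, $H_\ga(s,\mu)=\mu^2/2-K$ with $K$ large, and $c_x=0$ (admissible, since $\wtd c_\ga=K$). Then $M_0=-K$, so $M=0$, and the arc inequality you need, $M+H_\ga(s,(g\circ\ga)')\ge 0$, reads $-K\ge 0$. Worse, $L_\ga\ge K>0$ forces $u^0(x,t)>0=g(x)+Mt$ at interior points, so the conclusion of the comparison fails, not merely the method. Your proposed fix --- absorbing $-L_\ga(s,0)=\min_\mu H_\ga(s,\mu)$ into $M$ by calibrating $I$ --- cannot work: condition \eqref{mod2} constrains $I$ only through upper level sets of $\wtd H_\ga$ and provides no lower bound on $\min_\mu H_\ga$ in terms of $M_0$ or the $c_x$'s.

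Fortunately the upper barrier is not needed. The inequality you actually use downstream is $H_\ga(s,(u^0\circ\ga)')=-(u^0\circ\ga)_t\le M$, i.e.\ only the one-sided bound $(u^0\circ\ga)_t\ge -M$. This follows from the lower barrier alone: your verification that $\underline u=g-Mt$ is a subsolution is correct (including the vertex condition {\bf (ii)} via $-M\le-|c_x|\le c_x$), so $u^0(\cdot,h)\ge g-Mh$; then monotonicity of the solution operator with respect to the initial datum together with invariance under addition of constants --- rather than $L^\infty$-contraction, which would again require the missing upper bound $\|u^0(\cdot,h)-g\|_\infty\le Mh$ --- yields $u^0(\cdot,t+h)\ge u^0(\cdot,t)-Mh$ for all $t,h\ge 0$. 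With that one-sided estimate in hand, the remainder of your argument goes through unchanged and delivers the stated bound on the spatial Lipschitz constant.
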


We require   the compact interval $I$  in \eqref{mod1}
to satisfy
\begin{equation}\label{mod2}
 \mu_0 \not \in I \, \Rightarrow \, \left \{\begin{array}{cc}
                                   |\mu_0| > &  \max_\ga  {\mathrm {Lip}}(g\circ \ga) \\
                                    \wtd H_\ga(s,\mu) >&  \big ( (\wtd M_0 +1) \vee \max_{x \in \VV} |c_x| \big ) \, \forall \, |\mu| \geq |\mu_0|, \,\ga,\,  s \in [0,|\ga|], \end{array} \right .
   \end{equation}
    where
\[\wtd M_0= \max_\ga \, \min \{m \mid \wtd H_\ga(s,(g\circ \ga)') \leq m \;\hbox{a.e.}\; s \in [0,|\ga|]\}.\]
It is apparent that
\begin{equation}\label{mod20}
  \mu_0 \in I \, \Rightarrow \, \mu \in I \qquad\hbox{for all $\mu$ with $|\mu| \leq |\mu_0|$,}
\end{equation}
and
\begin{equation}\label{mod21}
 \big ( (\wtd M_0 +1) \vee \max_{x \in \VV} |c_x| \big ) > M_0 \geq \min_\mu H_\ga (s,\mu) \qquad\hbox{for any $\ga$, $s \in [0,|\ga|]$.}
\end{equation}
\smallskip

\begin{Lemma}\label{lemmod}
If $I$ satisfies \eqref{mod2} then the constants $M_0$ and $\wtd M_0$ coincide.
\end{Lemma}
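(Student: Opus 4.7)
The plan is to show the two inequalities $M_0 \leq \wtd M_0$ and $\wtd M_0 \leq M_0$ separately; the first is immediate from \textbf{(P3)}, while the second relies on the fact that for each arc $\ga$ the derivative $(g\circ\ga)'$ takes values inside the interval $I$ almost everywhere, so that $H_\ga$ and $\wtd H_\ga$ in fact agree along $(g\circ\ga)'$ by \eqref{mod1}.

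First, I would observe that \textbf{(P3)} gives $H_\ga(s,\mu)\le \wtd H_\ga(s,\mu)$ for every $\ga$, $s$ and $\mu$. Therefore any $m$ in the set $\{m\mid \wtd H_\ga(s,(g\circ\ga)')\le m \text{ a.e.}\}$ automatically belongs to $\{m\mid H_\ga(s,(g\circ\ga)')\le m \text{ a.e.}\}$. Taking the minimum in each arc and then the maximum in $\ga$ yields $M_0\le \wtd M_0$.

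For the reverse inequality, the key observation is the following: by assumption $g$ is Lipschitz on $\G$, so for each arc $\ga$ the composition $g\circ\ga$ is Lipschitz on $[0,|\ga|]$, and almost everywhere
\[
|(g\circ\ga)'(s)|\;\le\; \mathrm{Lip}(g\circ\ga)\;\le\;\max_\ga \mathrm{Lip}(g\circ\ga).
\]
Now I would invoke the contrapositive of the first line of \eqref{mod2}: if $\mu_0\notin I$, then $|\mu_0|>\max_\ga \mathrm{Lip}(g\circ\ga)$. Equivalently, every $\mu$ with $|\mu|\le \max_\ga \mathrm{Lip}(g\circ\ga)$ lies in $I$ (using, if needed, the interval property \eqref{mod20} to fill in intermediate values). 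Hence $(g\circ\ga)'(s)\in I$ for a.e.\ $s\in[0,|\ga|]$ and every $\ga$, and by the coincidence formula \eqref{mod1},
\[
H_\ga\bigl(s,(g\circ\ga)'(s)\bigr)\;=\;\wtd H_\ga\bigl(s,(g\circ\ga)'(s)\bigr)\qquad\text{for a.e.\ }s\in[0,|\ga|].
\]
Consequently the sets $\{m\mid H_\ga(s,(g\circ\ga)')\le m\text{ a.e.}\}$ and $\{m\mid \wtd H_\ga(s,(g\circ\ga)')\le m\text{ a.e.}\}$ are identical for every $\ga$, so their minima coincide, and taking the maximum over $\ga$ gives $M_0=\wtd M_0$.

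There is no real obstacle here; the only point that requires a moment's care is ensuring that $(g\circ\ga)'(s)$ actually belongs to $I$ (not just has small absolute value), which is why the conclusion relies on $I$ being an interval containing the origin-centered segment up to $\max_\ga \mathrm{Lip}(g\circ\ga)$, as guaranteed by the design of \eqref{mod2} together with \eqref{mod20}.
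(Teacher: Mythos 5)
Your proposal is correct and follows essentially the same route as the paper: the inequality $M_0\le\wtd M_0$ from \textbf{(P3)}, and the reverse inequality from the observation that $|(g\circ\ga)'|\le\mathrm{Lip}(g\circ\ga)$ a.e.\ forces $(g\circ\ga)'(s)\in I$ via \eqref{mod2}, so that $H_\ga$ and $\wtd H_\ga$ agree along the derivative of the initial datum. The only cosmetic difference is that you phrase the second step as an identity of the two sets of admissible constants $m$, while the paper phrases it as one inclusion; both are immediate from the same coincidence.
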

\begin{proof} Assume  $g \circ \ga$ to satisfy, for a given arc $\ga$ and constant $m$
\[ H_\ga(s, (g \circ \ga)') \leq m \qquad\hbox{for a.e. $s$.}\]
Since
\[ |(g \circ \ga)'(s)| \leq {\mathrm {Lip}} (g\circ \ga) \qquad\hbox{for a.e. $s$}\]
we derive that $(g \circ \ga)'(s) \in I$ and consequently
\[ H_\ga(s, (g \circ \ga)'(s)) = \wtd H_\ga(s, (g \circ \ga)'(s))  \qquad\hbox{for a.e. $s$.}\]
This in turn implies that $g \circ \ga$ is also subsolution to $\wtd H_\ga=m$, and consequently
\[\min \{m \mid \wtd H_\ga(s,(g\circ \ga)') \leq m \;\hbox{a.e.}\; s \in [0,|\ga|]\} \leq \min \{m \mid  H_\ga(s,(g\circ \ga)') \leq m \;\hbox{a.e.}\; s \in [0,|\ga|]\}.\]
The opposite inequality comes from {\bf (P3)}. This concludes the argument.
\end{proof}

\smallskip

\begin{Theorem}\label{modteo} If $I$ satisfies \eqref{mod2} then the solutions of (HJ) and (HJmod) coincide.
\end{Theorem}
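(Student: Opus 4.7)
The plan is to show that the solution $u^0$ of (HJmod) is also a solution of (HJ); the uniqueness statement of Theorem \ref{exun} will then deliver $u=u^0$.

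First I would dispatch the supersolution direction, which requires no gradient information. Since $H_\ga\leq \wtd H_\ga$ pointwise by {\bf(P3)}, any $C^1$ subtangent $\varphi$ to $u^0\circ\ga$---whether at an interior point of the arc or at a vertex as in condition {\bf (i)}---that satisfies $\varphi_t+H_\ga(\cdot,\varphi')\geq 0$ automatically satisfies $\varphi_t+\wtd H_\ga(\cdot,\varphi')\geq 0$. The flux limiter $c_x$ and the initial datum $g$ are the same in both problems, so $u^0$ is a supersolution of (HJ).

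The subsolution direction is the heart of the matter and reduces to showing that $(u^0\circ\ga)'(s,t)$ takes values in $I$ almost everywhere. Proposition \ref{modpro} bounds the spatial Lipschitz constant of $u^0\circ\ga$ by
\[ \max\big\{|\mu| : H_\ga(s,\mu)\leq M_0\vee \max_{x\in\VV}|c_x|,\ s\in[0,|\ga|]\big\}. \]
Writing $I=[-A,A]$ (justified by \eqref{mod20}), I would show every $\mu$ with $|\mu|>A$ falls outside the set defining this bound. By \eqref{mod1}--\eqref{mod2}, at the endpoints $\pm A$ of $I$ we have $H_\ga(s,\pm A)=\wtd H_\ga(s,\pm A)\geq (\wtd M_0+1)\vee\max_{x\in\VV}|c_x|$, while Lemma \ref{lemmod} gives $M_0=\wtd M_0$. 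Combined with \eqref{mod21}, which places the minimum of $H_\ga(s,\cdot)$ strictly below $(\wtd M_0+1)\vee\max_{x\in\VV}|c_x|$, this forces the minimum to be attained inside $I$; convexity of $H_\ga$ in $\mu$ (inherited from the Fenchel construction of Appendix \ref{penalized}) then yields monotonicity on each side of $I$, so $H_\ga(s,\mu)\geq H_\ga(s,\pm A)>M_0\vee\max_{x\in\VV}|c_x|$ whenever $|\mu|>A$. The spatial Lipschitz constant is therefore at most $A$.

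With this bound in hand, the subsolution property transfers with little effort. At an interior point $(s_0,t_0)$, any $C^1$ supertangent $\varphi$ to $u^0\circ\ga$ satisfies $|\varphi'(s_0,t_0)|\leq A$ by a standard Lipschitz-difference comparison, hence $\varphi'(s_0,t_0)\in I$; by \eqref{mod1} the subsolution inequality for (HJmod) then coincides with the one for (HJ$_\ga$). At the vertices, condition {\bf (ii)} involves only $c_x$, unchanged; the initial datum is also unchanged. Hence $u^0$ solves (HJ) and Theorem \ref{exun} closes the argument. The main obstacle is the gradient-in-$I$ estimate of the third paragraph, which delicately exploits the calibration \eqref{mod2} of $I$ against the thresholds in \eqref{mod21} and Lemma \ref{lemmod}, together with convexity of the modified Hamiltonian, to pin the Lipschitz bound inside the coincidence interval of $H_\ga$ and $\wtd H_\ga$.
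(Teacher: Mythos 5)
Your proposal is correct and follows essentially the same route as the paper: bound $|\varphi'|$ by the Lipschitz estimate of Proposition \ref{modpro}, use the calibration \eqref{mod2} together with convexity and \eqref{mod21} to show the relevant sublevel set of $H_\ga(s,\cdot)$ lies in $I$, conclude $H_\ga=\wtd H_\ga$ at the test gradients, and dispatch conditions {\bf (i)}/{\bf (ii)} via $\wtd H_\ga\geq H_\ga$. The only cosmetic difference is that the paper packages your inline convexity argument as the separate sublevel-set Lemma \ref{lemmodteo}.
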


\smallskip

We preliminarily prove an elementary lemma.

\begin{Lemma}\label{lemmodteo} Let $\wtd f$, $f$ be two convex coercive functions defined on $\R$ with $\wtd f \geq f$. Let $a$ be a constant satisfying $a > \min_\R f$  and
\begin{equation}\label{lemmodteo1}
  f(\mu)= \wtd f(\mu) \qquad\hbox{for any $ \mu$ with $\wtd f(\mu) \leq a$}
\end{equation}
then
\[\wtd J:=\{ \mu \mid \wtd f(\mu) \leq a\} = \{ \mu \mid f(\mu) \leq a\} :=J.\]
\end{Lemma}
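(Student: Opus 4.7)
The plan is to establish the two inclusions $\wtd J \subseteq J$ and $J \subseteq \wtd J$ separately.

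The first inclusion is immediate from \eqref{lemmodteo1}: if $\mu \in \wtd J$ then $\wtd f(\mu) \leq a$, and by hypothesis $f(\mu) = \wtd f(\mu) \leq a$, so $\mu \in J$.

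For the reverse inclusion, I would argue by contradiction: suppose $\mu_0 \in J \setminus \wtd J$, so that $f(\mu_0) \leq a < \wtd f(\mu_0)$. The assumption $a > \min_\R f$ produces a point where $f$ is strictly below $a$. A preliminary step, which is the delicate one, shows that under \eqref{lemmodteo1} one can actually pick $\mu_1$ with both $f(\mu_1) < a$ and $\wtd f(\mu_1) < a$; this requires showing that $\min \wtd f \leq a$ and combines the inequality $\wtd f \geq f$, coercivity of both functions, and the compatibility condition \eqref{lemmodteo1} evaluated at a minimizer of $\wtd f$ to identify $\min \wtd f$ with $\min f$.

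The heart of the argument is then a one-dimensional convexity contradiction. Along the segment joining $\mu_0$ to $\mu_1$ the function $\wtd f$ is continuous and moves from a value strictly greater than $a$ at $\mu_0$ to a value strictly less than $a$ at $\mu_1$, so there is an intermediate point $\mu_2 = t\mu_0 + (1-t)\mu_1$ with $t \in (0,1)$ at which $\wtd f(\mu_2) = a$. Applying \eqref{lemmodteo1} at $\mu_2$ forces $f(\mu_2) = a$. Convexity of $f$ then yields
\[
a \;=\; f(\mu_2) \;\leq\; t\,f(\mu_0) + (1-t)\,f(\mu_1) \;<\; t\,a + (1-t)\,a \;=\; a,
\]
the strict inequality arising from $f(\mu_1) < a$ and $1-t > 0$. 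This contradiction closes the argument.

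The main obstacle is the preliminary step ensuring that $\wtd J$ contains points where $\wtd f$ is strictly below $a$; once this is in place, the convexity argument is clean and purely one-dimensional, and is where the two hypotheses $\wtd f \geq f$ and \eqref{lemmodteo1} interact with convexity to force the sublevel sets to coincide.
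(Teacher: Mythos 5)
Your easy inclusion $\wtd J \subseteq J$ is correct, and your closing intermediate-value/convexity argument is clean and valid. The genuine problem is the ``preliminary step'' that you flag as delicate and then do not carry out: producing a point $\mu_1$ with $\wtd f(\mu_1) < a$. This step cannot be derived from the stated hypotheses. Take $f(\mu)=\mu^2$, $\wtd f(\mu)=\mu^2+10$, $a=1$: both functions are convex and coercive, $\wtd f\ge f$, $a>\min_\R f=0$, and \eqref{lemmodteo1} holds vacuously because $\{\wtd f\le a\}$ is empty; yet $\wtd J=\emptyset$ while $J=[-1,1]$, so the conclusion itself fails. Even adding $\wtd J\neq\emptyset$ does not save it: with $\wtd f(\mu)=|\mu|+1$, $f(\mu)=\max(1-\mu/2,\,\mu/2)$, $a=1$, all hypotheses hold, $\wtd J=\{0\}$, but $J=[0,2]$. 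In particular your proposed route of ``identifying $\min_\R\wtd f$ with $\min_\R f$'' via \eqref{lemmodteo1} at a minimizer of $\wtd f$ is not available, since \eqref{lemmodteo1} gives no information at a minimizer of $\wtd f$ when $\min_\R\wtd f>a$. So the reverse inclusion, as you have set it up, does not close.

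To be fair, the defect sits in the statement of the lemma rather than in your strategy: the hypothesis $a>\min_\R f$ should be $a>\min_\R\wtd f$ (which implies the stated one because $\wtd f\ge f$). The paper's own proof silently uses this stronger hypothesis when it asserts that $\wtd J=[\mu_1,\mu_2]$ is a nondegenerate interval with $\wtd f(\mu_1)=\wtd f(\mu_2)=a$ and then concludes $f>a$ outside $[\mu_1,\mu_2]$; and the stronger hypothesis does hold where the lemma is invoked in the proof of Theorem \ref{modteo}, since there $a=(\wtd M_0+1)\vee\max_{x\in\VV}|c_x|>\wtd M_0\ge\min_\mu\wtd H_\ga(s^*,\mu)$. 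Once $a>\min_\R\wtd f$ is granted, your preliminary step is immediate (take $\mu_1$ to be a minimizer of $\wtd f$, so $\wtd f(\mu_1)<a$ and, by \eqref{lemmodteo1}, $f(\mu_1)=\wtd f(\mu_1)<a$), and your contradiction argument then completes the proof correctly; it is essentially the same one-dimensional convexity content as the paper's direct description of the sublevel sets, merely organized by contradiction.
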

\begin{proof} By the assumptions on $\wtd f$,  $\wtd J=[\mu_1,\mu_2]$ for suitable real numbers $\mu_1$, $\mu_2$ with $\wtd f(\mu_1)= \wtd f(\mu_2) =a$, therefore $ f(\mu_1)=  f(\mu_2) =a$ thanks to \eqref{lemmodteo1}, and  $f > a $ in $\R \setminus [\mu_1,\mu_2]$ because $a > \min_\R f$. This implies
\[J = [\mu_1,\mu_2] = \wtd J,\]
as was claimed.
\end{proof}

\smallskip

\begin{proof}[{\bf Proof  of Theorem \ref{modteo}:}] Let us denote by $u^0$ the solution to (HJmod). For any  arc $\ga$, $(s_0,t_0) \in (0,|\ga|) \times (0,T)$,  any $C^1$ viscosity test function $\varphi$ (super or subtangent) to $u^0 \circ \ga$ at $(s_0,t_0)$, we have taking into account Proposition \ref{modpro}
\begin{eqnarray*}
  |\varphi'(s_0,t_0)| &\leq& {\mathrm{Lip}} (u^0 \circ \ga) \\
  &\leq&  \max \{|\mu| \mid H_{\ga}(s,\mu) \leq \big (M_0 \vee \max_{x \in \VV} |c_x| \big ) \quad s \in [0,|\ga|]\}.
\end{eqnarray*}
This implies that there exists $\mu^*$, $s^*$  with
\[|\mu^*| \geq |\varphi'(s_0,t_0)| \qquad\hbox{and} \qquad H_{\ga}(s^*,\mu^*) \leq  \big (M_0 \vee \max_{x \in \VV} |c_x| \big ).\]
We now apply Lemma \ref{lemmodteo} with $f= H_{\ga}(s^*,\cdot)$, $\wtd f= \wtd H_{\ga}(s^*,\cdot)$, $a= \big ( (\wtd M_0 +1) \vee \max_{x \in \VV} |c_x| \big )$, see \eqref{mod21}. As a matter of facts by \eqref{mod2}
\[ \wtd H_{\ga}(s^*,\mu) \leq \big ( (\wtd M_0 +1) \vee \max_{x \in \VV} |c_x| \big ) \,\Rightarrow \, \mu \in I \, \Rightarrow \, \wtd H_{\ga}(s^*,\mu) =  H_{\ga}(s^*,\mu) .\]
We therefore find that
\[ H_{\ga}(s^*,\mu^*) \leq  \big (M_0 \vee \max_{x \in \VV} |c_x| \big ) \, \Rightarrow \wtd H_{\ga}(s^*,\mu^*) \leq \big ( (\wtd M_0 +1) \vee \max_{x \in \VV} |c_x| \big )\]
and consequently $\mu^* \in I$, which in turn implies by \eqref{mod20} that
$\varphi'(s_0,t_0) \in I$. This yields
\[H_{\ga}(s_0,\varphi'(s_0,t_0))= \wtd H_{\ga}(s_0,\varphi'(s_0,t_0)),\]
so that $u^0 \circ \ga$ is solution to \eqref{HJg} in  $(0,|\ga|) \times (0,T)$. Furthermore, $u_0$ satisfies  the condition {\bf (i)} in Definition \ref{defsol} for (HJ)  beacuse $\wtd H \geq H$ and for free  the condition {\bf (ii)}. We conclude that $u^0$ is the solution to (HJ), which ends the proof.
\end{proof}

\smallskip
\begin{Corollary}\label{corcormod} Assume $I$ to satisfy \eqref{mod2}, then the families of Lipschitz subsolutions to (HJ) and (HJmod) coincide.
\end{Corollary}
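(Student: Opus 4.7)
I split the statement into two inclusions. First, every Lipschitz subsolution of (HJ) is a Lipschitz subsolution of (HJmod), which follows immediately from (P3): since $H_\ga \le \wtd H_\ga$, any $C^1$ supertangent $\varphi$ to $u\circ\ga$ at an interior point $(s_0,t_0)$ with $\varphi_t + \wtd H_\ga(s_0,\varphi'(s_0,t_0)) \le 0$ satisfies the same inequality with $H_\ga$ in place of $\wtd H_\ga$. The initial condition and the vertex condition \textbf{(ii)} make no reference to the Hamiltonian, so they carry over verbatim.

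For the reverse inclusion, let $u$ be a Lipschitz subsolution of (HJmod). The argument follows the template of Theorem \ref{modteo}. Fix an arc $\ga$, an interior point $(s_0,t_0)\in(0,|\ga|)\times(0,T)$ and a $C^1$ supertangent $\varphi$ to $u\circ\ga$ at $(s_0,t_0)$; the goal is to show $\varphi'(s_0,t_0)\in I$. Once this is accomplished, relation \eqref{mod1} yields $H_\ga(s_0,\varphi'(s_0,t_0))=\wtd H_\ga(s_0,\varphi'(s_0,t_0))$, so that the (HJmod)-subsolution inequality at $\varphi$ transfers word-for-word into an (HJ)-subsolution inequality; the vertex condition \textbf{(ii)} is again untouched. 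To place $\varphi'(s_0,t_0)$ inside $I$, I use the Lipschitz bound on $u\circ\ga$ to extract an auxiliary pair $(s^*,\mu^*)$ with $|\mu^*|\ge|\varphi'(s_0,t_0)|$ and $H_\ga(s^*,\mu^*)\le a$, where $a=(\wtd M_0+1)\vee\max_{x\in\VV}|c_x|$, and then apply Lemma \ref{lemmodteo} to the convex coercive pair $(H_\ga(s^*,\cdot),\wtd H_\ga(s^*,\cdot))$ at threshold $a$. The hypothesis \eqref{mod2} supplies the implication $\wtd H_\ga(s^*,\mu)\le a \Rightarrow \mu\in I$, so the lemma gives $\mu^*\in I$, and \eqref{mod20} propagates this to $\varphi'(s_0,t_0)\in I$.

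The principal obstacle is producing the auxiliary pair $(s^*,\mu^*)$ under the threshold $a$. In Theorem \ref{modteo} this step used Proposition \ref{modpro}, which is specific to the unique solution $u^0$. For a general Lipschitz subsolution the analogous estimate must instead be manufactured by combining the intrinsic Lipschitz constant of $u\circ\ga$ with the (HJmod) subsolution inequality $H_\ga(s_0,\varphi'(s_0,t_0))\le -\varphi_t(s_0,t_0)$ and the coercivity of $H_\ga$ from (P2), and then verifying that the resulting bound falls below $a$. Once this first bound is secured, the remainder of the argument runs identically to Theorem \ref{modteo}, yielding the coincidence of the two families of Lipschitz subsolutions.
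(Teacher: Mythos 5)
Your first inclusion is exactly the paper's: by {\bf (P3)} one has $H_\ga \le \wtd H_\ga$, so every subsolution of (HJ) is a subsolution of (HJmod), and the initial and vertex conditions are untouched. For the converse the paper is equally brief --- it simply says to argue ``as in the first part of the proof of Theorem \ref{modteo}'' --- and you have correctly located the one point where that template does not transfer verbatim: Theorem \ref{modteo} feeds Proposition \ref{modpro}, a Lipschitz estimate valid for the \emph{solution} $u^0$, into Lemma \ref{lemmodteo}, and no such estimate is handed to you for an arbitrary Lipschitz subsolution. The difficulty is that you flag this as ``the principal obstacle'' and then leave it open. Moreover, the repair you sketch cannot work as stated: combining the intrinsic Lipschitz constant of $u\circ\ga$ with the subsolution inequality and the coercivity {\bf (P2)} only bounds $|\varphi'(s_0,t_0)|$ in terms of ${\mathrm{Lip}}(u\circ\ga)$, a quantity over which you have no control, so there is no reason the resulting bound should fall below $a=(\wtd M_0+1)\vee\max_{x\in\VV}|c_x|$. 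To see that this is not a routine verification, note that for an $s$--independent $H_\ga$ and $K$ large, the function $w(s,t)=Ks-H_\ga(K)\,t-M$ (with $M$ large enough that $w(\cdot,0)\le g\circ\ga$) is a Lipschitz subsolution of the local equation \eqref{HJgm} whose spatial gradient lies far outside $I$; your scheme produces no contradiction for it, so the claim ``$\varphi'(s_0,t_0)\in I$'' cannot be extracted from the ingredients you list.

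To be fair, the paper's own one-line proof glosses over the same point, since Proposition \ref{modpro} is stated only for $u^0$; whatever additional input closes the argument there (for instance an a priori restriction on the Lipschitz constants of the subsolutions under consideration, or a separate argument showing that the a.e.\ gradient of any admissible subsolution stays in $I$) is precisely what is missing from your proposal. As written, you have proved one inclusion and reduced the other to an unproved claim, so the converse direction stands as a genuine gap.
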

\begin{proof} Due to $\wtd H_\ga \geq H_\ga$ for any $\ga$, any  subsolution to (HJ) is also subsolution of (HJmod). The converse implication can be proved for a Lipschitz continuous function arguing as in the firt part of the proof of Theorem \ref{modteo}.
\end{proof}

\smallskip

\begin{Notation}\label{notmod}
We denote throughout the paper by $\be_0$ a positive constant satisfying {\bf (P4)} for any $\ga$, in correspondence with an interval $I$ satisfying \eqref{mod2}. We further denote by $\ell_0$ a Lipschitz constant of $g \circ \ga$ in $[0,|\ga|]$ and of $L_\ga$ in $[0,|\ga|] \times [-\be_0,\be_0]$, for any arc $\ga$, namely such that the inequality
\[|L_\ga(s_1,\al_1)-L_\ga(s_2,\al_2)| \leq \ell_0 \big ( |s_1-s_2| + |\al_1-\al_2| \big )\]
holds true for any $(s_1,\al_1)$, $(s_2,\al_2)$ in $[0,|\ga|] \times [-\be_0,\be_0]$, any $\ga$.
\end{Notation}
\bigskip

\section{The algorithm}

In this section we provide a rough idea of our approximation scheme.
We fix  positive integers  $N_\ga$, $\ga \in \EE^+$, $N_T$, and consider finite decompositions of the parameter interval $[0,|\ga|$],  see next section
\[0=s^\ga_0< s^\ga_1< \cdots, s^\ga_{N_\ga-1}< s^\ga_{N_\ga}=|\ga|\]
and  a finite decomposition of the time interval $[0,T]$
\[0=t_0 < t_1< \cdots < t_{N_T-1}< t_{N_T}=T.\]

\medskip

\noindent {\bf step $0$ :} \; we determine numerically the maximal solution of the  equation \eqref{HJgm} in $(0,|\ga|) \times (0,T)$ plus initial condition at $t=0$ given by
\[(g(\ga(s^\ga_0)), \cdots, g(\ga(s^\ga_{N_\ga})))  \qquad\hbox{ for any $\ga \in \EE^+$}\] and denote by
\[ u^1_\ga(s^\ga_i) \qquad i=1, \cdots, N_\ga ,\;
\]
the approximate solutions so obtained. We get, for any vertex $x$, a finite family of values
\[ u^1_\ga (\ga^{-1}(x),t_1) \qquad\hbox{for $\ga \in \EE^+_x$.}\]
We set
\begin{eqnarray*}
  a &=& \min\{ u^1_\ga  (\ga^{-1}(x),t_1) \mid \ga \in \EE^+_x\} \\
 u(x,t_1) &=& \min\{  g(x)  +c_x  \,t_1, \, a\}.
\end{eqnarray*}
 We have therefore determined, for any arc $\ga \in \EE^+$,   a vector
\[(u(\oo(\ga),t_1), u^1_\ga(s^\ga_1,t_1),  \cdots, u^1_\ga(s^\ga_{N_\ga-1},t_1), u(\tt(\ga),t_1))\]
to use as initial value in the next step.

\medskip

\noindent {\bf step $n < N_T$ :} \;  we approximate the maximal solution of the equation \eqref{HJg} in $(0,|\ga|) \times (t_n,T)$ with initial condition at $t= t_n$ given by
\[(u(\oo(\ga),t_n), u^{n}_\ga(s^\ga_1,t_n),  \cdots, u^{n}_\ga(s^\ga_{N_\ga-1},t_n), u(\tt(\ga),t_n))\]
for any $\ga \in \EE^+$,  and denote by
\[ u^{n+1}_\ga(s_i,t_j) \qquad i=1, \cdots, N_\ga, \; j=n+1, \cdots, N_T\]
 the the solutions so obtained. We adjust the value at any vertex $x$ setting
\begin{eqnarray*}
  a &=& \min\{ u^{n+1}_\ga  (\ga^{-1}(x),t_{n+1}) \mid \ga \in \EE^+_x\} \\
 u(x,t_{n+1}) &=& \min\{  u(x,t_n)  +c_x  \,(t_{n+1} - t_n), \, a\},
\end{eqnarray*}
 and determine the vector of values at $t=t_{n+1}$ given by
\[(u(\oo(\ga),t_{n+1}), u^{n+1}_\ga(s^\ga_1,t_{n+1}),  \cdots, u^{n+1}_\ga(s^\ga_{N_\ga-1},t_{n+1}), u(\tt(\ga),t_{n+1})).\]

\bigskip
\section{ Discretization}
We describe in this section  the space--time  discretization and the discrete operators  on which  our  Semi-Lagrangian schemes, to implement the algorithm outlined in Section 2, are based.

\medskip

\subsection{Space--time discretization}
We start by fixing a spatial and a time step denoted by $\Delta x$, $\Delta t$, respectively, that we call {\em admissible pair} if
\begin{equation}\label{admi}
\ 0< \Delta x < |\ga| \quad\hbox{for any $\ga \in \EE^+$}, \quad 0 < \Delta t < T, \quad \Delta x \leq \Delta t,
\end{equation}
and we set
\[\Delta=(\Delta x, \Delta t).\]
We  further define
\[N^\Delta_{\gamma}=\left \lceil{\frac{ |\ga|}{ \Delta x}}\right \rceil > 0 \quad\hbox {for any $\gamma \in \EE^+$, and} \quad N^\Delta_T= \left \lceil \frac T{\Delta t} \right \rceil > 0,\]
where $ \lceil\cdot \rceil$ stands for the ceiling function, namely the least integer greater than or equal to the function argument.

\smallskip

We thereafter set
\begin{eqnarray*}
  s^{\Delta, \ga}_i &=& \left \{ \begin{array}{cc}
        \frac {i |\ga| }{N^\Delta_\ga} &  \quad\hbox{ for $i=0,\dots,N^\Delta_\ga -1$ }\\
               |\ga|& \quad\hbox{ for $ i= N^\Delta_\ga$}
               \end{array} \right . \\
  t^\Delta_n &=& \left \{ \begin{array}{cc}
               \frac {n T}{N^\Delta_T} &  \quad\hbox{ for $n=0,\dots,N^\Delta_T -1$ }\\
                T & \quad\hbox{ for $ n= N^\Delta_T$}
               \end{array} \right .
\end{eqnarray*}
for any $\ga$.  We have therefore associated     to $\Delta$  and any arc $\ga$ the partition of the parameter interval $[0,|\ga|]$ in $N^\Delta_{\gamma}$ subintervals, all of them, except possibly the last, with  length $\Delta x$, and similarly a partition of the time interval $[0,T]$ in $N^\Delta_T$ subintervals  all of them, except possibly the last, with  length $\Delta t$. The last subinterval of the above partitions has length  less than or equal to  $\Delta x$,  $\Delta t$, respectively.

We proceed introducing uniform space--time grids of  $[0,|\ga|] \times [0,T]$, associated to any arc. We  set
\begin{eqnarray*}
  \mathcal S_{\Delta,\ga} &=&  \left \{s^{\Delta,\ga}_i \mid i=0,\dots ,N^\Delta_{\gamma} \right \} \\
 \mathcal T_\Delta &=& \{t^\Delta_n \mid n =0,\dots , N^\Delta_T  \}.
\end{eqnarray*}
and
\begin{eqnarray*}
 \Gamma^T_{\Delta ,\gamma} &=& \SC_{\Delta,\ga} \times \mathcal T_\Delta \\
 \Gamma^T_{\Delta } &=& \underset{\gamma \in \EE^+}\bigcup \ga(\SC_{\Delta,\ga}) \times \mathcal T_\Delta
\end{eqnarray*}

To ease notation, we omit from now on the index $\Delta$ from the above formulae. Accordingly,  in the case where we deal with a sequence of admissible pairs $\Delta_m$, we will write just $m$ instead than $\Delta_m$.

\smallskip

\begin{Proposition}\label{mignotta} Assume the sequence of pairs $\Delta_m$ to be admissible in the sense of  \eqref{admi}, with $\Delta_m $ infinitesimal, then
\begin{itemize}
  \item[{\bf (i)}] $\G^T_{m,\ga} \to [0,|\ga|] \times [0,T]$ for any $\ga \in \EE^+$;
  \item[{\bf (ii)}]  $\G^T_m \to \G \times [0,T]$.
\end{itemize}
The above convergence  must be understood in the sense of Definition \ref{abspaces}.
\end{Proposition}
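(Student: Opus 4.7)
The plan is to verify Kuratowski/Hausdorff convergence in the sense of Definition \ref{abspaces}, namely: every point of the limit space is a limit of points taken from the approximating grids, and conversely every limit of a convergent sequence of grid points lies in the limit space. Since $\G^T_{m,\ga}\subset[0,|\ga|]\times[0,T]$ and $\G^T_m\subset\G\times[0,T]$ for every $m$, the inclusion part of the definition is automatic by closedness of the targets, so the real content is the approximation direction.

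First I would record the elementary mesh estimate. By \eqref{admi} the pair $\Delta_m=(\Delta x_m,\Delta t_m)$ is admissible, and $\Delta_m\to 0$ means that both components vanish. From the definitions
\[
N^m_\ga=\left\lceil\frac{|\ga|}{\Delta x_m}\right\rceil,\qquad N^m_T=\left\lceil\frac{T}{\Delta t_m}\right\rceil,
\]
together with the construction of $s^{m,\ga}_i$ and $t^m_n$, the consecutive nodes are equispaced with step $|\ga|/N^m_\ga\le\Delta x_m$ (respectively $T/N^m_T\le\Delta t_m$), except for the last subinterval whose length is at most that of the preceding ones. In particular the mesh size of $\SC_{m,\ga}$ is bounded by $\Delta x_m$ and that of $\mathcal T_m$ by $\Delta t_m$, so both tend to zero.

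For part \textbf{(i)} fix $(s,t)\in[0,|\ga|]\times[0,T]$. Define $i_m$ as the largest index with $s^{m,\ga}_{i_m}\le s$ and $n_m$ as the largest index with $t^m_{n_m}\le t$; these exist since $s^{m,\ga}_0=0$ and $t^m_0=0$. By the mesh estimate,
\[
|s-s^{m,\ga}_{i_m}|\le\Delta x_m,\qquad |t-t^m_{n_m}|\le\Delta t_m,
\]
hence $(s^{m,\ga}_{i_m},t^m_{n_m})\in\G^T_{m,\ga}$ converges to $(s,t)$. Conversely, if a sequence $(s_m,t_m)\in\G^T_{m,\ga}$ converges, its limit lies in $[0,|\ga|]\times[0,T]$ by closedness. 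This gives both halves of Kuratowski convergence.

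For part \textbf{(ii)} the argument reduces to (i) via the decomposition $\G=\bigcup_{\ga\in\EE^+}\ga([0,|\ga|])$. Given $(x,t)\in\G\times[0,T]$, pick $\ga\in\EE^+$ and $s\in[0,|\ga|]$ with $\ga(s)=x$; by (i) there are $(s^{m,\ga}_{i_m},t^m_{n_m})\to(s,t)$, so $(\ga(s^{m,\ga}_{i_m}),t^m_{n_m})\in\G^T_m$ converges to $(x,t)$ by Lipschitz continuity of $\ga$. Conversely, any convergent sequence in $\G^T_m$ has, after extraction, all its terms on a fixed arc (there are only finitely many arcs), and then its limit lies in $\ga([0,|\ga|])\times[0,T]\subset\G\times[0,T]$. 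The only mild point is this finite pigeonhole extraction; otherwise the statement is a direct translation of the vanishing mesh size. No step presents a serious obstacle, the whole proposition being essentially a bookkeeping exercise on uniform partitions.
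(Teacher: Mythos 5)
Your proof is correct and follows essentially the same route as the paper, which simply bounds the diagonal of each grid cell by $\sqrt{(\Delta_m x)^2+(\Delta_m t)^2}$ (resp.\ $\sqrt{(\ell^*\Delta_m x)^2+(\Delta_m t)^2}$ with $\ell^*$ the maximal Lipschitz constant of the arcs) and lets it vanish. The only cosmetic difference is that Definition \ref{abspaces} asks only for the approximation direction, so your converse (closedness) half and the pigeonhole extraction over arcs, while harmless, are not needed.
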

\begin{proof}
The diagonal length  of any cell of the grids  $\G^T_{m,\ga}$ and $\G^T_m$ are estimated from above by
\[ \sqrt{ (\Delta_m x)^2 + (\Delta_m t)^2} \quad\hbox{and} \quad \sqrt{ ( \ell^* \Delta_m x )^2 + (\Delta_m t)^2}, \]
respectively, where $\ell^*$ is the maximum of the Lipschitz constants of the arcs $\gamma \in \EE^+$.
Both lengths become infinitesimal as $m$ goes to $+ \infty$.
\end{proof}

\medskip

\subsection{An operator on discrete functions}
We denote by $B(\Gamma^T)$ and $B(\Gamma^T_\ga)$, for $\ga \in \EE^+$, the spaces of functions from $\Gamma^T$,
$\Gamma^T_\gamma$, respectively,  to $\R$.  If $v \in B(\Gamma^T)$ and $\ga \in \EE^+$, we set
\[v \circ \ga(s,t)= v(\ga(s),t)   \qquad\hbox{for $(s,t) \in \Gamma^T_\ga$}.\]
It is apparent that $v \circ \ga \in B(\Gamma^T_\ga)$.

\smallskip

Given an arc $\ga$ and  $w \in B(\Gamma^T_\ga)$, we define the {\it interpolation operator}
\[I_\ga[w](s,t) = w(s_i,t)+\frac{s-s_i}{s_{i+1}-s_i}(w (s_{i+1},t) - w(s_i,t) )\]
with $s_{i}$, $s_{i +1}$ in $\mathcal S_\ga$ and $s \in [s_i, s_{i+1}]$.
We record for later use:

 \begin{Lemma}\label{bastille} Given an arc $\ga$, $w \in B(\G^T_\ga)$, $t \in \mathcal T$, if $w(\cdot,t)$  is  a Lipschitz continuous function from $\mathcal S_\ga$ to $\R$,  then $I_\ga[w](\cdot,t)$  stays   Lipschitz continuous in $[0,|\ga|]$ with the same Lipschitz  constant.
 \end{Lemma}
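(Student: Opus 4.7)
The plan is to exploit directly the piecewise linear nature of $I_\ga[w](\cdot,t)$. Fix $t \in \mathcal T$ and let $L$ denote the Lipschitz constant of $w(\cdot,t)$ on $\mathcal S_\ga$, so that $|w(s_{i+1},t)-w(s_i,t)| \leq L\,(s_{i+1}-s_i)$ for every consecutive pair of grid points. On each subinterval $[s_i,s_{i+1}]$ the function $I_\ga[w](\cdot,t)$ is affine with slope
\[
\frac{w(s_{i+1},t)-w(s_i,t)}{s_{i+1}-s_i},
\]
whose absolute value is bounded by $L$. Hence $I_\ga[w](\cdot,t)$ is $L$--Lipschitz on each single subinterval.

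To upgrade this to a global Lipschitz estimate on $[0,|\ga|]$, I would pick arbitrary $\sigma < \sigma'$ in $[0,|\ga|]$, locate the indices $i \leq j$ with $\sigma \in [s_i,s_{i+1}]$ and $\sigma' \in [s_j,s_{j+1}]$, and use the telescoping decomposition
\[
I_\ga[w](\sigma',t)-I_\ga[w](\sigma,t) = \bigl(I_\ga[w](s_{i+1},t)-I_\ga[w](\sigma,t)\bigr) + \sum_{k=i+1}^{j-1}\bigl(I_\ga[w](s_{k+1},t)-I_\ga[w](s_k,t)\bigr) + \bigl(I_\ga[w](\sigma',t)-I_\ga[w](s_j,t)\bigr).
\]
Since $I_\ga[w]$ agrees with $w$ at grid points, the middle terms are bounded by $L(s_{k+1}-s_k)$, while the two boundary terms are bounded by $L(s_{i+1}-\sigma)$ and $L(\sigma'-s_j)$ respectively, by the per-subinterval estimate already established. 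Summing, the telescoped lengths collapse to $\sigma'-\sigma$, giving $|I_\ga[w](\sigma',t)-I_\ga[w](\sigma,t)| \leq L\,(\sigma'-\sigma)$.

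There is no real obstacle here: the statement is essentially the elementary fact that piecewise linear interpolation on a partition does not enlarge the Lipschitz constant, and the only care required is in bookkeeping the telescoping across subintervals. The proof avoids any use of the network structure or of the hypotheses on the Hamiltonian, relying only on convexity of $[s_i,s_{i+1}]$ and on the definition of $I_\ga$.
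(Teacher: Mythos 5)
Your proof is correct and follows essentially the same strategy as the paper: split the difference $I_\ga[w](\sigma',t)-I_\ga[w](\sigma,t)$ into the two boundary pieces inside the end subintervals plus a middle grid-to-grid contribution, and bound each piece by $L$ times its length. The only cosmetic difference is that the paper bounds the middle portion in a single step using the Lipschitz constant of $w$ between the (possibly non-adjacent) grid points $s_2$ and $s'_1$, whereas you telescope it over the intermediate subintervals; both are valid, and your separate treatment of the single-subinterval case covers the degenerate situation where the telescoping decomposition does not apply.
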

 \begin{proof} We denote by $\ell_w$ the Lipschitz constant of $w(\cdot,t)$, and consider   $s' > s$ in $[0,|\ga|]$.  We first  assume  that $s$, $s'$ lie  in two different intervals $[s_1,s_2]$, $[s'_1,s'_2]$, respectively,  of the decomposition of $[0,|\ga|]$ associated to $\mathcal S_\ga$, so that
 \begin{equation}\label{bastille1}
  s'-s = (s'-s'_1) + (s'_1-s_2) + (s_2-s).
 \end{equation}
 To ease notation we set
 \begin{equation}\label{bastille2}
  \la= \frac{s-s_1}{\Delta x}, \qquad \la'= \frac{s'-s'_1}{\Delta x},
 \end{equation}
 note that
 \begin{equation}\label{bastille3}
   1- \la= \frac{s_2-s}{\Delta x}, \qquad 1- \la'= \frac{s'_2-s'}{\Delta x}
 \end{equation}
 Taking into account \eqref{bastille1}, \eqref{bastille2}, \eqref{bastille3}, we have
 \begin{eqnarray*}
 &&|I_\ga[w](s,t) - I_\ga[w](s',t)| = | (1-\la) \, w(s_1,t) + \la \, w(s_2,t) - (1-\la') \, w(s'_1,t) - \la' \, w(s'_2,t)| \\
  &\leq&  | (1-\la) \, w(s_1,t) + \la \, w(s_2,t) - w(s_2,t)| + |w(s'_1,t) - w(s_2,t)|\\ &+& |w(s'_1,t) - (1-\la') \, w(s'_1,t) - \la' \, w(s'_2,t)|\\
  &\leq& (1-\la)|w(s_1,t)-w(s_2,t)| + \ell_w \, (s'_1-s_2) + \la' |w(s'_2)- w(s'_1)|\\ &\leq&  \frac{s_2-s}{\Delta x}\, \ell_w \, \Delta x +\ell_w \, (s'_1-s_2) +
  \frac{s'-s_1'}{\Delta x} \, \ell_w \, \Delta x\\
  &=& \ell_w \, [(s_2-s) + (s'_1-s_2) + (s'-s'_1)] = \ell_w \, |s-s'|.
 \end{eqnarray*}
 It is left the case where $s$ and $s'$ belong to the same interval $[s_1,s_2]$ of the decomposition associated to $\mathcal S_\ga$. We are still assuming $s' >s$, we keep the same definition of $\la$  as in \eqref{bastille2} while $\la'$ now is set as $\frac{s'-s_1}{\Delta x}$. We have
 \begin{eqnarray*}
 |I_\ga[w](s,t)- I_\ga[w](s',t)| &=& | (1-\la) \, w(s_1,t) + \la \, w(s_2,t) - (1-\la') \, w(s_1,t) - \la' \, w(s_2,t)| \\
 &=& (\la' -\la)|w(s_1,t) - w(s_2,t)| \leq \frac {s'-s}{\Delta x} \, \ell_w \, \Delta x= \ell_w \, (s'-s).
 \end{eqnarray*}
 \end{proof}

\bigskip

\section{Evolutive problems}

\subsection{Semi Lagrangian schemes}\label{lagrange}
We define  on each arc $\gamma \in \EE^+$ the  operator
  \[ S_\ga: B(\Gamma^T_\ga) \to B(\Gamma^T_\ga)\]

\[S_\gamma[ v](s,t)=\min_{\frac{s- |\ga|}{\Delta t}\leq \alpha\leq \frac{s}{\Delta t}} \{I_\ga[v]( s-\Delta t \alpha,t )+\Delta t L_{\ga}(s,\alpha) \}
\]
 for  $(s,t) \in \Gamma^T_{\ga}$.

\medskip

We consider the case where the operators $S_{m,\ga}$, corresponding to a sequence of infinitesimal pairs $\Delta_m$,  are applied to the restriction in $ \G^T_{m,\ga}$ of a $C^1$ function.

\smallskip
\begin{Proposition}\label{nerobuono} Let $\Delta_m= (\Delta_m x, \Delta_m t)$ be a sequence of admissible pairs which becomes infinitesimal as $m$ goes to infinity.
Then for any arc $\ga \in \EE^+$ and for any function $\psi: (0,|\ga|) \times (0,T) \to \R$ of class $C^1$, we have
\begin{eqnarray*}
  \frac{S_{m,\gamma}[\psi](\cdot, \cdot)-\psi(\cdot,\cdot)} {\Delta_m t} &\to&  - H_{\ga}(\cdot,\psi'(\cdot,\cdot)) \\
  \frac{\psi(\cdot, \cdot - \Delta_m t)- \psi(\cdot,\cdot)}{\Delta_m t} &\to &- \psi_t(\cdot,\cdot)
\end{eqnarray*}
uniformly in $(0,|\ga|) \times (0,T) $ as $\G_{m,\ga} \cap \big ((0,|\ga|) \times (0,T) \big )\to \big ( (0,|\ga|) \times (0,T) \big )$. Here {\em  uniformly} must be understood in the sense of Definition \ref{deftoy}.
\end{Proposition}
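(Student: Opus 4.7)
The plan is to use Taylor expansion of $\psi$ combined with Fenchel duality between $H_\ga$ and $L_\ga$, with property {\bf (P4)} making the minimization effectively compact. Let me fix a compact subset $K \subset (0,|\ga|) \times (0,T)$ in view of the stated uniform convergence.

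First I would observe that by {\bf (P4)} we have $L_\ga(s,\alpha) = +\infty$ for $|\alpha| > \be_0$, so the minimum defining $S_{m,\ga}[\psi](s,t)$ is effectively taken over $\alpha \in [-\be_0,\be_0] \cap [\frac{s-|\ga|}{\Delta_m t}, \frac{s}{\Delta_m t}]$. For $(s,t) \in K$, the latter interval eventually contains $[-\be_0,\be_0]$ as $\Delta_m t \to 0$, since $s$ stays bounded away from $0$ and $|\ga|$. So for $m$ large the minimization is effectively over $\alpha \in [-\be_0,\be_0]$, a fixed compact set. For such $\alpha$, the argument $s - \Delta_m t\,\alpha$ lies in $[0,|\ga|]$ and close to $s$ uniformly in $\alpha$, so interpolation is legal.

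Next I would estimate the two main errors. Using the $C^1$ regularity of $\psi$, Taylor's theorem gives
\[\psi(s - \Delta_m t\,\alpha, t) = \psi(s,t) - \Delta_m t\,\alpha\,\psi'(s,t) + \Delta_m t\cdot r_m(s,t,\alpha),\]
with $r_m \to 0$ uniformly for $(s,t) \in K$, $\alpha \in [-\be_0,\be_0]$. Separately, on the grid interval $[s_i,s_{i+1}]$ containing $s-\Delta_m t\,\alpha$, a standard two-MVT argument yields
\[\bigl|I_\ga[\psi](s - \Delta_m t\,\alpha,t) - \psi(s-\Delta_m t\,\alpha,t)\bigr| \le \Delta_m x \cdot \omega_{\psi'}(\Delta_m x) = o(\Delta_m x),\]
and by admissibility $\Delta_m x \le \Delta_m t$ this is $o(\Delta_m t)$, again uniformly. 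Combining, for $(s,t) \in K$,
\[\frac{S_{m,\ga}[\psi](s,t) - \psi(s,t)}{\Delta_m t} = \min_{\alpha \in [-\be_0,\be_0]}\bigl\{-\alpha\,\psi'(s,t) + L_\ga(s,\alpha)\bigr\} + o(1).\]

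The conclusion for the first limit follows from Fenchel duality: since $L_\ga(s,\cdot) \equiv +\infty$ outside $[-\be_0,\be_0]$,
\[\min_{\alpha \in [-\be_0,\be_0]}\bigl\{-\alpha\,\mu + L_\ga(s,\alpha)\bigr\} = -\max_{\alpha \in \R}\bigl\{\alpha\,\mu - L_\ga(s,\alpha)\bigr\} = -H_\ga(s,\mu),\]
evaluated at $\mu = \psi'(s,t)$. For the second limit, the mean value theorem gives
\[\frac{\psi(s, t-\Delta_m t) - \psi(s,t)}{\Delta_m t} = -\psi_t(s, t - \theta_m \Delta_m t)\]
for some $\theta_m \in [0,1]$, and the uniform continuity of $\psi_t$ on $K$ yields uniform convergence to $-\psi_t(s,t)$.

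The main technical obstacle I anticipate is the uniformity of all error terms together with the matching of the constraint window $[\frac{s-|\ga|}{\Delta_m t},\frac{s}{\Delta_m t}]$ with the fixed effective domain $[-\be_0,\be_0]$; this is exactly where one uses that $(s,t)$ ranges over compact subsets of the open cell $(0,|\ga|) \times (0,T)$, matching the hypothesis $\G_{m,\ga}\cap((0,|\ga|)\times(0,T)) \to (0,|\ga|)\times(0,T)$ and the convergence framework of Definition~\ref{deftoy}.
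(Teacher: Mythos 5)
Your proposal is correct and follows essentially the same route as the paper: restrict the minimization to $[-\be_0,\be_0]$ using {\bf (P4)} and the fact that the constraint window blows up for interior points, combine a first--order Taylor expansion with the (admissibility--controlled) interpolation error to reduce to $\min_\alpha\{-\alpha\,\psi' + L_\ga\}$, identify this with $-H_\ga(\cdot,\psi')$ by Fenchel duality, and handle the time quotient by the mean value theorem. The only cosmetic difference is that you phrase uniformity via compact subsets of the open cell rather than via the sequential formulation of Definition~\ref{deftoy} used in the paper, and these are interchangeable here.
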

\begin{proof}  We fix an arc $\ga \in \EE^+$. We consider $(s,t) \in (0,|\ga|) \times (0,T)$ and  $(s_m,t_m)
\in \G_{m,\ga} \cap \big ((0,|\ga|) \times (0,T) \to (0,|\ga|) \times (0,T) \big )$ with
\[(s_m,t_m) \to (s,t).\]
 We obtain  through  a first order Taylor expansion
\begin{eqnarray}
&&S_{m,\gamma}[ \psi](s_m,t_m)-\psi(s_m,t_m) \label{nerobuono2} \\ &=&\min_{\frac{s_m-1}{\Delta_m t}\leq \alpha\leq \frac{s_m}{\Delta_m t}}\left (-\psi(s_m,t_m)+ I_{m,\ga}[\psi]( s_m-\Delta_m t \alpha,t_m )+\Delta_m t L_\gamma(s_m,\alpha)\right) \nonumber\\
&=&\min_{\frac{s_m-|\ga|}{\Delta_m t}\leq \alpha\leq \frac{s_m}{\Delta_m t}}\Big(-\Delta_m t \alpha\,\psi'( s_m,t_m)+\Delta_m t L_\gamma(s_m,\alpha)+\oo(\Delta_m t)\Big). \nonumber
\end{eqnarray}
Since  $s \in (0,|\ga|)$, then
\[\lim_m \frac{s_m-|\ga|}{\Delta_m t}=- \infty \quad\hbox{and} \quad \lim_m \frac{s_m}{\Delta_m t}=+ \infty,\]
 the minimum  in \eqref{nerobuono2} is consequently over the interval $[-\be_0,\be_0]$, for $m$ large enough, thanks to Lemma \ref{toy}. By dividing by $\Delta_m t$, and  sending $m$ to infinity,   we therefore get
\begin{eqnarray*}
 \lim_{m\to \infty}\frac{S_{m,\gamma}[\psi](s_m,t_m)-\psi(s_m,t_m)} {\Delta_m t} &=&   \lim_{m\to \infty} \left [- \max_{|\alpha| \leq \be_0}\Big( \alpha\, \psi'(s_m,t_m)- L_\gamma(s_m,\alpha)\Big) \right ] \\
   &=&  \lim_{m\to \infty}  - H_{\ga}(s_m,\psi'( s_m,t_m)) = - H_\ga(s,\psi'( s,t)).
   \end{eqnarray*}
This shows the first part of the assertion. The second part can be proved straightforwardly: let $(s,t) \in [0,|\ga|] \times [0,T]$,   $(s_m,t_m)
\in \G_{m,\ga} \cap \big (0,|\ga|) \times (0,T) \big )$ with $(s_m,t_m) \to (s,t)$. We have for $m$ large enough
\[\frac {\psi(s_m,t_m - \Delta_m t) - \psi(s_m, t_m )}{\Delta_m t} = - \psi_t(s_m,\tau_m)\]
for a suitable $\tau_m \in [t_m - \Delta_m t, t_m]$. This implies that $\tau_m \to t$ as $m \to + \infty$ and
\[\frac {\psi(s_m,t_m -\Delta_m t) - \psi(s_m, t_m )}{\Delta_m t} \to - \psi_t(s,t),\]
as was asserted.
\end{proof}

\smallskip

It is easy to check  the following.
\begin{Proposition}\label{propri} For any arc $\ga$ the  operator $S_\ga$ is
\begin{enumerate}
\item[ i)] {\rm{monotone}}, i.e. given  $ w_1, \,w_2 \in B(\Gamma_\ga^T)$ with $ w_1\leq  w_2$, we have
\[ S_\ga[ w_1](s,t) \leq S_\ga[ w_2](x,t) \qquad\hbox{ for all $(s ,t)\in \Gamma_\ga^T$;}
\]
\item[ ii)] {\rm{invariant by addition of constants}}, namely given $w \in B(\Gamma_\ga^T)$, we have
\[ S[ w+C](s,t)=S[w](s,t)+C \qquad\hbox{for any constant $C$, any $(s,t) \in \Gamma_\ga^T$.}\]
\end{enumerate}
\end{Proposition}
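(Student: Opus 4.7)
The plan is to push both properties from the interpolation operator $I_\ga$ through the minimization that defines $S_\ga$. The key observation is that, at each fixed time $t$ and each fixed $s$, the map $w \mapsto I_\ga[w](s,t)$ is a convex combination of two of the values $w(s^\ga_i,t)$, $w(s^\ga_{i+1},t)$; in particular it is affine in $w$ with nonnegative coefficients summing to one.

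For monotonicity, I would first observe that if $w_1 \leq w_2$ pointwise on $\Gamma^T_\ga$, then for every $(\sigma,t) \in [0,|\ga|] \times \mathcal T$ we have $I_\ga[w_1](\sigma,t) \leq I_\ga[w_2](\sigma,t)$, simply because on each subinterval of $\mathcal S_\ga$ both interpolation coefficients are nonnegative. Adding the common term $\Delta t \, L_\ga(s,\alpha)$ preserves this inequality for every admissible $\alpha$ in the range $[(s-|\ga|)/\Delta t,\, s/\Delta t]$, so, taking the minimum over this same range on both sides, one concludes that $S_\ga[w_1](s,t) \leq S_\ga[w_2](s,t)$ for every $(s,t) \in \Gamma^T_\ga$.

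For invariance under addition of constants, I would use that $I_\ga$ is affine together with the fact that its interpolation weights sum to one: for any constant $C$ and any $(\sigma,t)$,
\[
I_\ga[w+C](\sigma,t) = I_\ga[w](\sigma,t) + C.
\]
Since the admissible range of $\alpha$ in the definition of $S_\ga$ depends only on $s$, $|\ga|$ and $\Delta t$, and not on $w$, the constant $C$ factors out of the minimization:
\[
S_\ga[w+C](s,t) = \min_{\alpha} \bigl\{ I_\ga[w](s-\Delta t\,\alpha,t) + C + \Delta t\, L_\ga(s,\alpha)\bigr\} = S_\ga[w](s,t) + C.
\]

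There is no real obstacle here; the only minor point to be careful about is that, for values of $s$ close to $0$ or $|\ga|$, the argument $\sigma = s - \Delta t\,\alpha$ of $I_\ga[w](\cdot,t)$ stays inside $[0,|\ga|]$ precisely because of the admissibility constraint $(s-|\ga|)/\Delta t \leq \alpha \leq s/\Delta t$, so the interpolation operator is always well-defined and the two arguments above go through verbatim.
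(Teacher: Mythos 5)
Your argument is correct and is exactly the standard one: monotonicity and commutation with constants of the interpolation operator $I_\ga$ (a convex combination with nonnegative weights summing to one) pass through the minimization since the admissible set of $\alpha$ and the term $\Delta t\,L_\ga(s,\alpha)$ do not depend on $w$. The paper gives no proof at all for this proposition (it is stated as "easy to check"), so your write-up simply supplies the details the authors omitted.
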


\smallskip

We lift the $S_\ga$'s to  $B(\G ^T )$  defining
\[S: B(\G^T) \to B(\G^T) \] via
\begin{equation}\label{zero}
  S[ v](x,t) = \{S_\gamma[v\circ \ga](\ga^{-1}(x),t) \mid \gamma \in \EE^+_x\} \quad\hbox{if $(x,t)  \in \Gamma^T$, $x \not\in  \VV$}
\end{equation}
and through the following two steps procedure if instead $x$ is a vertex
\begin{eqnarray}
 \widetilde S[v](x,t) &=& \min \{S_\gamma[v\circ \ga](\ga^{-1}(x),t) |\, \gamma \in \EE^+_x \} \label{uno} \\
  S[ v](x,t) &=& \min \{ \widetilde S[v](x,t), u(x,t) + c_x\Delta t\} \label{due}
\end{eqnarray}

\medskip

\subsection{Evolutive discrete problems}
Given a Lipschitz continuous datum $g$ in $\G$, we  consider the following evolutive problem with the operator $S$   defined in \eqref{zero}, \eqref{uno}, \eqref{due}
\begin{equation}\label{eq:scheme}
\left \{\begin{array}{cccc}
  v(x,0) & =& g(x) & (x,0) \in \G^T\\
 v(x,t) & =& S[v](x,t-\Delta t)) & (x,t) \in \G^T, \, t >0
\end{array} \right .
\end{equation}
 Given an arc $\ga \in \EE^+$, we use the solution $v$ of the above problem as lateral boundary datum  for the following one
\begin{equation}\label{eq:schemeg}
\left \{\begin{array}{cccc}
  w_\ga(s,0) &=& g\circ \ga(s) & s \in \SC_\ga \\
  w_\ga(0,t) &=& v(\ga(0),t)   & t \in \mathcal T \cap (0,T]\\
  w_\ga(|\ga|,t) &=& v(\ga(|\ga|),t) & t \in \mathcal T \cap (0,T]\\
  w_\ga(s,t) &=& S_\ga[w_\ga](s,t-\Delta t) & (s,t) \in  \G^T_\ga \cap \big ( (0,|\ga|) \times (0,T] \big )
\end{array} \right .
\end{equation}

\smallskip

\begin{Proposition}\label{iran} The function $v \circ \ga$, with $v$   solution to \eqref{eq:scheme}, solves  \eqref{eq:schemeg}.
\end{Proposition}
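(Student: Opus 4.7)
The plan is to verify each of the four defining conditions of \eqref{eq:schemeg} for the function $v \circ \ga$ directly, using the definitions of $S$ and $v$, and the fact that interior parameter values $s \in (0,|\ga|)$ correspond to non-vertex points of $\Gamma$.

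First I would check the initial condition: by the first equation in \eqref{eq:scheme}, $v(\ga(s),0) = g(\ga(s))$ for every $s \in \SC_\ga$, so $v\circ\ga(s,0) = g\circ\ga(s)$. The two lateral boundary conditions at $s=0$ and $s=|\ga|$ are literally the definition of $v\circ\ga$ evaluated at the vertex images $\ga(0) = \oo(\ga)$ and $\ga(|\ga|) = \tt(\ga)$, so they hold tautologically.

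The main content is the recurrence at interior grid points $(s,t) \in \G^T_\ga \cap \bigl((0,|\ga|) \times (0,T]\bigr)$. Here the crucial geometric input is that $\ga$ is a simple arc whose support meets $\VV$ only at its endpoints, so $\ga(s) \notin \VV$ whenever $s \in (0,|\ga|)$. Because $\ga(s)$ is not a vertex, the set $\EE^+_{\ga(s)}$ is the singleton $\{\ga\}$, and therefore formula \eqref{zero} applies and reduces to
\[
S[v](\ga(s),t-\Delta t) \;=\; S_\ga\bigl[v\circ\ga\bigr]\bigl(\ga^{-1}(\ga(s)),\,t-\Delta t\bigr) \;=\; S_\ga\bigl[v\circ\ga\bigr](s,\,t-\Delta t).
\]
Combining this with the second equation in \eqref{eq:scheme} evaluated at $x = \ga(s)$ gives
\[
v\circ\ga(s,t) \;=\; v(\ga(s),t) \;=\; S[v](\ga(s),t-\Delta t) \;=\; S_\ga\bigl[v\circ\ga\bigr](s,t-\Delta t),
\]
which is precisely the fourth line of \eqref{eq:schemeg}.

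There is no real obstacle: the proposition is essentially a consistency check saying that $S$ was defined so as to coincide with the arc-wise operator $S_\ga$ away from the vertices, and to feed the correct junction values back as boundary data for the arc-wise problem. The only point demanding care is invoking the no-self-loop hypothesis on $\G$, which guarantees that $\ga^{-1}$ is single-valued on interior points, so that the reduction of \eqref{zero} to a single term is unambiguous; this is already built into the standing assumptions on the network.
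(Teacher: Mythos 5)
Your proposal is correct and follows essentially the same route as the paper's proof: the boundary and initial conditions are immediate from the definitions, and the interior recurrence follows by noting that $\ga(s)$ is not a vertex for $s\in(0,|\ga|)$, so \eqref{zero} reduces $S[v](\ga(s),t-\Delta t)$ to $S_\ga[v\circ\ga](s,t-\Delta t)$. Your extra remark on the singleton nature of $\EE^+_{\ga(s)}$ just makes explicit what the paper leaves implicit.
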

\begin{proof} The function $v \circ \ga$ trivially satisfies the boundary conditions.  We then  take $(s,t) \in \G^T_\ga \cap \big ( (0,|\ga|) \times (0,T] \big )$, then since $v$ is solution of \eqref{eq:scheme}
\[v \circ \ga(s,t)= v(\ga(s),t)= S[v](\ga(s),t-\Delta t).\]
Taking into account that $\ga(s)$  is not a vertex we further have by \eqref{zero}
\[S[v](\ga(s),t) = S_\gamma[v\circ \ga](s,t) .\]
Combining the two previous formulae, we finally get
\[v \circ \ga(s,t) = S_\gamma[v\circ \ga](s,t- \Delta t).\]
This concludes the proof.

\end{proof}

\bigskip

\section{Asymptotic analysis}

We denote by  $\Delta_m$  a   sequence of admissible pairs with $\Delta_m \to 0$  and by $v_m$ the  solution to \eqref{eq:scheme}  with $\Delta_m$ in place of $\Delta$. We further  denote by $I_{m,\ga}$  the interpolation operator related to $\Delta_m$.

The aim of this section is to show that the $v_m$'s uniformly converge as $m \to + \infty$, up to a subsequence, to a Lipschitz continuous function $u$ defined in $\G \times [0,T]$.

We arbitrarily fix  an arc $\ga$,  we start by pointing out a relevant equiLipschitz property for the sequence $v_m \circ \ga$.

\smallskip

\begin{Proposition}\label{retire}  The functions  $(s,t) \mapsto v_m \circ \ga$ are Lipschitz continuous in $ \G^T_{m,\ga}$
with Lipschitz constant independent of $m$.
\end{Proposition}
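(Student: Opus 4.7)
The proof splits naturally into a time-Lipschitz estimate (i) $|v_m\circ\ga(s,t+\Delta_m t)-v_m\circ\ga(s,t)|\leq C_1\Delta_m t$ and a space-Lipschitz estimate (ii) $|v_m\circ\ga(s_1,t)-v_m\circ\ga(s_2,t)|\leq C_2|s_1-s_2|$ on the grid $\G^T_{m,\ga}$, both with constants independent of $m$; together they give the joint Lipschitz bound claimed via $|(s,t)-(s',t')|\leq|s-s'|+|t-t'|$.

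For (i) I would first control the one-step increment $\|v_m(\cdot,\Delta_m t)-g\|_\infty$. At a non-vertex grid point $x$, $v_m(x,\Delta_m t)=S_{m,\ga}[g\circ\ga](\ga^{-1}(x),0)$; the competitor $\alpha=0$ supplies the upper bound $g(x)+\Delta_m t\,L_\ga(\ga^{-1}(x),0)$, while property \textbf{(P4)} confines the $\alpha$-minimization to $[-\beta_0,\beta_0]$, and the Lipschitz bound on $I_{m,\ga}[g\circ\ga](\cdot,0)$ (constant $\ell_0$, by Lemma \ref{bastille}) produces the matching lower bound $g(x)-C\Delta_m t$. At vertex grid points the same estimate applies on each incident arc, and the additional candidate $g(x)+c_x\Delta_m t$ in \eqref{due} is already of the required form. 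The bound $\|v_m(\cdot,\Delta_m t)-g\|_\infty\leq C_1\Delta_m t$ is then propagated to every step by the monotonicity and invariance-by-constants of $S$ (Proposition \ref{propri}, lifted to the composite vertex operator \eqref{uno}--\eqref{due}): from $v_m(\cdot,\Delta_m t)\leq g+C_1\Delta_m t$ and its reverse inequality, one inductively obtains $|v_m\circ\ga(s,t_{n+1})-v_m\circ\ga(s,t_n)|\leq C_1\Delta_m t$.

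For (ii) I would prove by induction on $n$ a uniform bound $L_n:=\mathrm{Lip}(v_m\circ\ga(\cdot,t_n):\SC_{m,\ga}\to\R)\leq\ell_0(1+T)$. In the interior case, where adjacent grid nodes $s_1<s_2$ lie in $[\beta_0\Delta_m t,\,|\ga|-\beta_0\Delta_m t]$, the admissible $\alpha$-range at either point contains $[-\beta_0,\beta_0]$, so the minimizer $\alpha^*$ realizing $S_{m,\ga}[v_m\circ\ga](s_2,t_n)$ is admissible at $s_1$ as well. Using $\alpha^*$ as a competitor at $s_1$, invoking Lemma \ref{bastille} to get the Lipschitz constant $L_n$ of $I_{m,\ga}[v_m\circ\ga](\cdot,t_n)$ on the whole of $[0,|\ga|]$, and using the Lipschitz bound $\ell_0$ on $L_\ga(\cdot,\alpha)$, one obtains $|v_m\circ\ga(s_1,t_{n+1})-v_m\circ\ga(s_2,t_{n+1})|\leq (L_n+\ell_0\Delta_m t)|s_1-s_2|$, hence $L_{n+1}\leq L_n+\ell_0\Delta_m t$ and the stated bound after iteration over $n\leq N_T$.

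The main obstacle is the boundary layer of grid nodes within $\beta_0\Delta_m t$ of $\{0,|\ga|\}$, where the admissible $\alpha$-set is strictly smaller than $[-\beta_0,\beta_0]$ and the interior argument breaks down because the minimizer $\alpha^*$ at $s_2$ need not be admissible at $s_1$. There I would project $\alpha^*$ onto the admissible interval at $s_1$, replacing it with $\alpha=s_1/\Delta_m t$ when $\alpha^*>s_1/\Delta_m t$, so that the discrete characteristic reaches the endpoint $s=0$ and the bound $S_{m,\ga}[v_m\circ\ga](s_1,t_n)\leq v_m(\ga(0),t_n)+\Delta_m t\,L_\ga(s_1,s_1/\Delta_m t)$ brings the vertex value into play. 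The projection error must then be controlled using the time-Lipschitz estimate on $v_m(\ga(0),\cdot)$ from (i) together with the vertex update rule \eqref{uno}--\eqref{due}, which couples $v_m(\ga(0),\cdot)$ to the incident-arc operators and to the flux limiter $c_{\ga(0)}$; the delicate point is showing that the boundary-layer contribution to $L_n$ saturates at a constant depending only on $\ell_0$, $\beta_0$, and $\max_x|c_x|$ rather than accumulating over time steps.
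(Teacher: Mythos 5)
Your decomposition into a space estimate and a time estimate matches the paper, and your time--Lipschitz argument (one--step bound at $t=\Delta_m t$ propagated by monotonicity and invariance under addition of constants) is a legitimate, self--contained alternative to the paper's, which instead derives the time estimate \emph{after} and \emph{from} the spatial one by directly comparing $v_m\circ\ga(s,t)$ with $I_{m,\ga}[v_m\circ\ga](s-\Delta_m t\,\alpha_0,t-\Delta_m t)+\Delta_m t\,L_\ga(s,\alpha_0)$ for the optimal $\alpha_0$. That part of your proposal is fine.

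The genuine gap is the boundary layer in the spatial estimate, which you correctly identify as the hard case but do not close, and for which your proposed route is both more complicated than necessary and, as you admit, incomplete. There is no need to bring in the time regularity of $v_m(\ga(0),\cdot)$, the vertex update rule, or the flux limiter, and no risk of an accumulating boundary contribution. The paper's resolution stays entirely inside the spatial induction: if $\alpha_2$ is optimal at $s_2$ and $s_1-\alpha_2\Delta_m t>|\ga|$ (so that $\alpha_2<0$ and $s_1>s_2$), one uses as competitor at $s_1$ the projected velocity $\alpha_1=(s_1-|\ga|)/\Delta_m t$, whose characteristic lands exactly at the endpoint $|\ga|$. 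Then the difference splits into two pieces: the interpolated values at time $t_k$, namely $I_{m,\ga}[v_m\circ\ga](|\ga|,t_k)-I_{m,\ga}[v_m\circ\ga](s_2-\Delta_m t\,\alpha_2,t_k)$, which is controlled by the inductive Lipschitz constant times $|\ga|-s_2+\Delta_m t\,\alpha_2$ (the endpoint value \emph{is} a grid value covered by the induction hypothesis via Lemma \ref{bastille}, so no separate vertex analysis is needed); and the Lagrangian terms, where the Lipschitz continuity of $L_\ga$ in \emph{both} arguments with constant $\ell_0$ (Notation \ref{notmod}) converts the projection error $\Delta_m t\,|\alpha_1-\alpha_2|=s_1-|\ga|-\alpha_2\Delta_m t$ back into a spatial distance. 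The two spatial increments sum exactly to $s_1-s_2$, yielding $(1+(k+1)\Delta_m t)\ell_0\,|s_1-s_2|$ with no extra term to saturate. The point you are missing is precisely this use of the Lipschitz dependence of $L_\ga$ on the velocity variable to absorb the projection error; without it your argument does not terminate, and with it the flux limiter and the time estimate play no role in the spatial bound.
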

\begin{proof}
We  start by showing that   $s \mapsto v_m \circ \ga(s,t)$ is Lipschitz  continuous in $\SC_{m,\ga}$  for any $m$, $t \in \mathcal T_m$, $m \in \N$, with Lipschitz constant   independent of $t$ and $m$.  More precisely, we claim that a Lipschitz constant for $v_m(\cdot, t^m_k)$  is given by
\begin{equation}\label{induce}
 (1+ k \, \Delta_m t) \, \ell_0 \leq (1 + T) \, \ell_0 \qquad\hbox{for $k=0, \cdots N^{m}_T$,}
\end{equation}
where  $\ell_0$ is a Lipschitz constant of  both $g \circ \ga$   in $[0,|\ga|]$  and  $L_\ga$ in $[0,|\ga|] \times [-\be_0, \be_0]$, see Notation \ref{notmod}. We show the claim by induction on $k$.

Since it is trivial for $k=0$, we  go on assuming $v_m(\cdot,t^m_k)$ to be Lipschitz continuous in $\mathcal S_{m,\ga}$ with Lipschitz constant as in \eqref{induce}, and prove that $v_m(\cdot,t^m_{k+1})$  keeps the  Lipschitz continuity, and  the Lipschitz constant is given replacing in \eqref{induce}  $k$ by $k+1$. Let  $s_1$, $s_2$ be in $\SC_{m,\ga}$,
we denote  by  $\alpha_2$  an optimal element   for $v_m \circ\ga(s_2,t^m_{k +1})$, and assume first that $s_1-\Delta_m t \alpha_2 \in [0,|\ga|]$,  then
 we can write taking into account Lemma \ref{bastille}
\begin{eqnarray*}
 && v_m \circ \ga (s_1,t^{m}_{k +1}) - v_m \circ \ga (s_2,t^{m}_{k +1}) \\&\leq& I_{m,\ga}[v_m \circ \ga]( s_1-\Delta_m t \alpha_2,t^{m}_k) +\Delta_m t L_\ga(s_1,\alpha_2) \\ &-&I_{m,\ga}[v_m \circ \ga](s_2-\Delta_m t \alpha_2,t^{m}_k) - \Delta_m t L_\ga(s_2,\alpha_2)
  \\ &\leq& (1 + k \, \Delta_m t )\, \ell_0\, |s_1-s_2 | + \Delta_m t \, \ell_0 \, |s_1-s_2|\\&=&  ( 1 +(k +1)\, \Delta_m t) \, \ell_0 \, |s_1-s_2|,
 \end{eqnarray*}
as was claimed.  We proceed assuming  that $s_1 - \al_ 2 \Delta_m t >|\ga| $. We deduce that $\al_2 < 0$,  $s_1 > s_2$, we define
\[\al_1= \frac{s_1 -|\ga|}{\Delta_m t} > \al_2 > - \be_0.\]
We can write
\begin{eqnarray*}
 && v_m \circ \ga (s_1,t^{m}_{k +1}) - v_m \circ \ga (s_2,t^{m}_{k +1}) \\&\leq&   I_{m,\ga}[v_m \circ \ga]( |\ga|,t^{m}_k) +\Delta_m t L_{\ga}(s_1,(s_1-|\ga|)/\Delta_m t) \\ &-&I_{m,\ga}[v_m \circ \ga](s_2-\Delta_m t \alpha_2,t^{m}_k) - \Delta_m t L_{\ga}(s_2,\alpha_2) \\ &\leq& (1  + k \, \Delta_m t )\, \ell_0 \, (|\ga|- s_2+\Delta_m t \al_2)  + \Delta_m t \, \ell_0 \, (s_1-s_2) + \ell_0 \,  (s_1 -|\ga|   - \al_2 \Delta_m t  )\\ &\leq& (1 + k \, \Delta_m t )\, \ell_0 \, (|\ga|- s_2+\Delta_m t \al_2)  + \Delta_m t \, \ell_0 \, (s_1-s_2) + (1 + k \, \Delta_m t ) \ell_0 \, (s_1 -|\ga|   - \al_2 \Delta_m t)
  \\&=&(1+(k+1) \Delta_m t) \ell_0 \, |s_2-s_1|.
 \end{eqnarray*}
 The case where $s_1 - \al_ 2 \Delta_m t <  0$ can be treated along the same lines. The induction argument is therefore concluded and  \eqref{induce} has been proved. We proceed showing the claimed Lipschitz continuity in time. We consider
\[(s,t) \in  \SC_m \times \big ( \mathcal T_m \cap \{t >0\} \big )\]
and denote by $\alpha_0$ an optimal element
for $S_{m,\ga}[v_m \circ \ga](s,t-\Delta_m t)$. We have
\begin{eqnarray*}
  &&|v_m \circ \ga(s,t - \Delta_m t) - v_m \circ \ga(s,t)  |\\ & =& |I_{m,\ga}[v_m \circ \ga](s,t - \Delta_m t) - I_{m,\ga}[v_m \circ \ga](s-\Delta_m t \alpha_0,t-\Delta_m t) - \Delta_m t \, L_\ga(s,\alpha_0)| \\
   &\leq&  ( 1 + T )\, \ell_0  \, |\Delta_m t \, \alpha_0| + \left (\max_{|\alpha| \leq \be_0 \atop s \in [0,|\ga|]} |L_\ga(s,\alpha)| \right ) \Delta_m t
   \\&\leq& \left [   (1 + T) \, \ell_0 \, \be_0  +
   \left (\max_{ |\alpha| \leq \be_0 \atop s \in [0,|\ga|]} |L_\ga(s,\alpha)| \right ) \right ]\, \Delta_m t.
\end{eqnarray*}
This concludes the proof.
\end{proof}

\smallskip
\begin{Notation}\label{nota1}
Since the arcs are finite, we have, thank of the previous result, that there  exists a positive constant denoted in what follows by $\wtd \ell$ with
\[|v_m\circ \ga(s_1,t_1) - v_m\circ\ga (s_2,t_2)| < \wtd \ell \, (|s_1-s_2| + |t_1-t_2|)\]
for any arc $\ga$, $s_1$, $s_2$ in
$\SC_{m,\ga}$, $t_1$, $t_2$ in $\mathcal T_m$.
\end{Notation}

\smallskip

We smoothly derive from Proposition \ref{retire}

\begin{Proposition}\label{nogymn}
For any arc $\ga$, the functions $v_m \circ \ga$   uniformly converge, up to subsequences,  when
\[\G^T_{m,\ga}  \to [0,|\ga|] \times [0,T]\]
 to a Lipschitz continuous function denoted by $\wtd u_\ga$, defined in $[0,|\ga|] \times [0,T]$, with Lipschitz constant $\wtd \ell$.
\end{Proposition}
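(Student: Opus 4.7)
The plan is to deduce the conclusion from the equiLipschitz estimate already established in Proposition \ref{retire} (codified in Notation \ref{nota1}) combined with a uniform bound on the sequence $v_m\circ\ga$. Once these two ingredients are in place, a straightforward diagonal extraction on a countable dense subset of $[0,|\ga|]\times[0,T]$, followed by a Lipschitz extension, yields the claimed limit. The only slight subtlety is that the functions $v_m\circ\ga$ are not defined on a common domain but on the shrinking grids $\G^T_{m,\ga}$, so the convergence has to be interpreted in the sense of Definition \ref{abspaces} recalled in the appendix.

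More concretely, I would proceed as follows. First, observe that $v_m\circ\ga(\cdot,0)=g\circ\ga$ for every $m$, hence $\|v_m\circ\ga(\cdot,0)\|_\infty\le \|g\circ\ga\|_\infty$; combining this with the uniform Lipschitz-in-time bound of Proposition \ref{retire}, I obtain a constant $M$ (independent of $m$) with
\[
|v_m\circ\ga(s,t)|\le M\qquad\hbox{for every $(s,t)\in \G^T_{m,\ga}$, every $m$.}
\]
Next, I fix a countable dense subset $D\subset [0,|\ga|]\times[0,T]$, and for each $(s,t)\in D$ I choose grid points $(s_m,t_m)\in \G^T_{m,\ga}$ with $(s_m,t_m)\to(s,t)$ (this is possible by Proposition \ref{mignotta}). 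The sequence $v_m\circ\ga(s_m,t_m)$ is bounded in $\R$, so by a standard diagonal argument I can extract a subsequence, still denoted by $m$, along which $v_m\circ\ga(s_m,t_m)$ converges for every $(s,t)\in D$; call the limit $\wtd u_\ga(s,t)$. The equiLipschitz estimate with constant $\wtd \ell$ from Notation \ref{nota1} immediately gives, for any $(s,t),(s',t')\in D$,
\[
|\wtd u_\ga(s,t)-\wtd u_\ga(s',t')|\le \wtd \ell\,(|s-s'|+|t-t'|),
\]
so $\wtd u_\ga$ extends uniquely to a Lipschitz function on $[0,|\ga|]\times[0,T]$ with Lipschitz constant $\wtd \ell$.

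Finally, I verify uniform convergence on the varying grids. Given $\eps>0$, I pick a finite $\eps$-net $\{(s^j,t^j)\}_{j=1}^J\subset D$ of $[0,|\ga|]\times[0,T]$ with respect to the $\ell^1$ distance, and choose grid approximants $(s^j_m,t^j_m)\in \G^T_{m,\ga}$. For $m$ large enough, $|v_m\circ\ga(s^j_m,t^j_m)-\wtd u_\ga(s^j,t^j)|<\eps$ for every $j=1,\dots,J$. For an arbitrary point $(\si,\tau)\in \G^T_{m,\ga}$, choosing $j$ with $|s^j-\si|+|t^j-\tau|<\eps$ and using the equiLipschitz bounds for both $v_m\circ\ga$ and $\wtd u_\ga$, I control $|v_m\circ\ga(\si,\tau)-\wtd u_\ga(\si,\tau)|$ by $\eps(1+2\wtd\ell)$ up to a negligible grid error, which establishes the uniform convergence in the sense required.

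The main (but modest) obstacle is the bookkeeping needed because the approximations live on shrinking discrete grids and the convergence must be phrased via Definition \ref{abspaces}; conceptually, however, this is just Arzel\`a--Ascoli adapted to varying domains, and the uniform bound together with the equiLipschitz estimate from Proposition \ref{retire} carries the entire argument.
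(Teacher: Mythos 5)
Your proof is correct and follows essentially the same route as the paper: both rest on the equiboundedness coming from the common initial datum and the equiLipschitz estimate of Proposition \ref{retire}, and then invoke an Arzel\`a--Ascoli compactness argument. The only cosmetic difference is that the paper linearly interpolates the grid functions to the fixed rectangle $[0,|\ga|]\times[0,T]$ and cites Ascoli there, whereas you unpack the diagonal-extraction and $\eps$-net argument directly on the shrinking grids.
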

\begin{proof} The $v_m \circ \ga$'s are equiLipschitz continuous in $\G^T_{m,\ga}$ and equibounded, because they  take the same initial datum at $t=0$. By linear interpolation of the values of $v_m \circ \ga$ on the grid $\G_{m,\ga}^T$, we obtain a sequence of equiLipschitz  and equibounded functions $\wtd u_{m,\ga}$   defined in $[0,|\ga|] \times [0,T]$. By applying Ascoli theorem we get  that $\wtd u_{m,\ga}$ uniformly converge, up to subsequences,  to    a Lipschitz function denoted by $\wtd u_\ga$. It is clear that the same holds true for the original sequence $v_m \circ \ga$ as $\G^T_{m,\ga}  \to [0,|\ga|] \times [0,T]$. It is easy to check that the Lipschitz constant of $\wtd u_\ga$ is  $\wtd \ell$.
\end{proof}

\medskip

Taking into account that the arcs are finite, we find a common subsequence  for which uniform convergence of $v_m \circ \ga$ takes place for any $\ga$, and we still denote the uniform limit function $\wtd u_\ga$. We can therefore define a Lipschitz function   $u$ in $\G \times [0,T]$ setting
\begin{equation}\label{quasiquasi0}
u(x,t) = \wtd u_\ga(\ga^{-1}(x),t) \qquad\hbox{for $\ga \in \EE^+_x$,}
\end{equation}
which  conversely yields
\begin{equation}\label{quasiquasi}
 \wtd u_\ga = u \circ \ga \qquad\hbox{in $[0,|\ga|] \times [0,T]$, for any arc $\ga$.}
\end{equation}

\begin{Lemma}\label{prequasimain} The above definition of $u$ is well posed and $v$ is Lipschitz continuous.
\end{Lemma}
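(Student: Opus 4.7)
The plan is to verify the two assertions separately: first that the definition \eqref{quasiquasi0} of $u(x,t)$ is independent of the choice of $\ga \in \EE^+_x$, and second that the resulting function $u$ is Lipschitz continuous on $\G \times [0,T]$.

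For well-posedness the only delicate case is $x \in \VV$, since $\EE^+_x$ is a singleton when $x \in \G \setminus \VV$. I would fix $x \in \VV$ and two arcs $\ga_1, \ga_2 \in \EE^+_x$, so that $\ga_i^{-1}(x) \in \{0,|\ga_i|\}$, and exploit the key fact that each $v_m$ is a \emph{single}-valued function on $\G^T$: by the very definition $v_m \circ \ga(s,t) = v_m(\ga(s),t)$ one has
\[v_m \circ \ga_1(\ga_1^{-1}(x), t_m) \,=\, v_m(x, t_m) \,=\, v_m \circ \ga_2(\ga_2^{-1}(x), t_m)\]
for every $t_m \in \mathcal T_m$. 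Choosing $t_m \in \mathcal T_m$ with $t_m \to t$ and applying to both outer terms the uniform convergence established in Proposition \ref{nogymn} yields $\wtd u_{\ga_1}(\ga_1^{-1}(x), t) = \wtd u_{\ga_2}(\ga_2^{-1}(x), t)$, so the value $u(x,t)$ is unambiguous.

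For Lipschitz continuity I would first dispatch the case of $(x_1,t_1),(x_2,t_2)$ with both $x_i$ on a common arc $\ga$: the limit $\wtd u_\ga = u \circ \ga$ has Lipschitz constant $\wtd \ell$ by Proposition \ref{nogymn}, and $\ga^{-1}$ is $1$-Lipschitz as recorded in the Networks paragraph, so combining the two yields an estimate with constant $\wtd \ell$ with respect to arc length. For points on distinct arcs, the connectedness assumption on $\G$ provides a finite chain of concatenated arcs joining $x_1$ to $x_2$ through intermediate vertices; applying the single-arc bound along each link and summing via the triangle inequality gives
\[|u(x_1,t_1) - u(x_2,t_2)| \,\leq\, \wtd \ell \, \big( d_\G(x_1,x_2) + |t_1-t_2| \big ),\]
where $d_\G$ denotes the intrinsic path metric on $\G$.

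Neither step is really hard. The mildest subtlety is choosing the chain to realize $d_\G(x_1,x_2)$ so that the telescoping is tight, but this is routine because the arcs are finite in number and the Lipschitz constant $\wtd \ell$ from Notation \ref{nota1} is already uniform across all arcs. The well-posedness argument hinges entirely on the observation that at vertices the grid functions $v_m$ carry a single value shared by all incident arcs, so nothing beyond continuity of the uniform limit is needed.
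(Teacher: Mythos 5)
Your proposal is correct and follows essentially the same route as the paper: well-posedness via the identity $v_m\circ\ga_i(\ga_i^{-1}(x),t_m)=v_m(x,t_m)$ for both incident arcs followed by passage to the uniform limit, and Lipschitz continuity from the uniform constant $\wtd\ell$ of the $\wtd u_\ga$'s combined with the Lipschitz character of the parametrizations. The paper leaves the cross-arc chaining implicit where you spell it out, but the argument is the same.
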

\begin{proof} For the well posedness of \eqref{quasiquasi0}, we check that if $x$ is a vertex incident on two different arcs $\ga_1$, $\ga_2$ in $\EE^+$, then
\begin{equation}\label{prequasimain1}
 \wtd u_{\ga_1}(\ga_1^{-1}(x),t)= \wtd u_{\ga_2}(\ga_2^{-1}(x),t) \qquad\hbox{for any $t \geq 0$.}
\end{equation}
To ease notation, we assume that whole sequence $v_m \circ \ga$ is convergent for any $\ga$. Taking therefore into account that $\wtd u_{\ga}$ is the  uniform limit of $v_m \circ \ga$, for any $\ga$, we have for  $t \in [0,T]$,
\begin{eqnarray*}
  \wtd u_{\ga_1}(\ga_1^{-1}(x),t) &=& \lim_m v_m\circ\ga_1(\ga_1^{-1}(x),t_m) \\
  \wtd u_{\ga_2}(\ga_2^{-1}(x),t) &=& \lim_m v_m\circ\ga_2(\ga_2^{-1}(x),t_m)
\end{eqnarray*}
where $t_m \in \T_m$ and $t_m \to t$,  and
\begin{eqnarray*}
  v_m\circ\ga_1(\ga_1^{-1}(x),t_m) &=& v_m(\ga_1\circ\ga_1^{-1}(x),t_m) = v_m(x,t_m) \\
  v_m\circ\ga_2(\ga_2^{-1}(x),t_m) &=& v_m(\ga_2\circ\ga_2^{-1}(x),t_m) = v_m(x,t_m)
\end{eqnarray*}
which  shows \eqref{prequasimain1}.
According to Proposition \ref{nogymn}, the function $\wtd u_\ga$ is Lipschitz continuous in $[0,|\ga|] \times [0,T]$ for any arc $\ga$ with Lipschitz constant $\wtd \ell$, taking into  account \eqref{quasiquasi0} and that  the  $\ga^{-1}$ are equiLipschitz continuous, we deduce that $u$ is Lipschitz continuous, as was claimed.
\end{proof}

We finally have:

\begin{Theorem} \label{quasimain}
The  sequence of solutions  $v_m$ to  \eqref{eq:scheme}  uniformly  converges, up to subsequences, to the Lipschitz continuous function  $u$ defined in \eqref{quasiquasi0} as  $\G^T_m \to \G \times [0,T]$.
\end{Theorem}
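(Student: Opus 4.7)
The plan is to leverage the arc--wise uniform convergence from Proposition \ref{nogymn} together with the well--definedness and Lipschitz regularity of $u$ from Lemma \ref{prequasimain}, and to translate these into the network--wise convergence required by Definition \ref{deftoy}.

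First, I would extract a common subsequence. Enumerating $\EE^+ = \{\ga_1,\dots,\ga_k\}$, I would apply Proposition \ref{nogymn} to $\ga_1$ to obtain a subsequence along which $v_m \circ \ga_1$ converges uniformly, then apply it to $\ga_2$ to refine, and so on. Since $\EE^+$ is finite, this finite diagonal procedure terminates in a single subsequence--still denoted $\{v_m\}$--along which $v_m \circ \ga \to u \circ \ga$ uniformly on $[0,|\ga|] \times [0,T]$ (in the sense of Definition \ref{abspaces}) for every $\ga \in \EE^+$. The limits are identified with $u \circ \ga$ via \eqref{quasiquasi}, and $u$ is well defined and Lipschitz continuous on $\G \times [0,T]$ by Lemma \ref{prequasimain}.

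Next I would lift arc--wise convergence to network--wise convergence. By Definition \ref{deftoy}, it suffices to show that for every sequence $(x_m,t_m) \in \G^T_m$ with $(x_m,t_m) \to (x,t) \in \G \times [0,T]$ one has $v_m(x_m,t_m) \to u(x,t)$. By construction of $\G^T_m$, each $x_m$ can be written as $\ga_m(s_m)$ with $\ga_m \in \EE^+$ and $s_m \in \SC_{m,\ga_m}$. As $\EE^+$ is finite, along a subsequence $\ga_m$ is constant, say $\ga_m \equiv \ga$. Since $\ga([0,|\ga|])$ is closed in $\G$, the limit $x$ lies on this arc and can be written as $x = \ga(s)$ with $s_m \to s$ in $[0,|\ga|]$; hence $(s_m,t_m) \to (s,t)$ in $[0,|\ga|] \times [0,T]$. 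Arc--wise uniform convergence then yields
\[
v_m(x_m,t_m) \;=\; (v_m \circ \ga)(s_m,t_m) \;\longrightarrow\; (u \circ \ga)(s,t) \;=\; u(x,t).
\]

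The one delicate point, and the step I expect to require the most care, is the behaviour at vertices: when $x \in \VV$, different subsequences of the arcs $\ga_m$ may single out different branches through $x$, and one must check that all these arc--wise limits produce the same value. This is precisely what \eqref{prequasimain1} in Lemma \ref{prequasimain} guarantees, so every subsequential limit coincides with the unambiguously defined $u(x,t)$, and a standard subsequence argument shows that the whole sequence $v_m(x_m,t_m)$ converges to $u(x,t)$. No further equicontinuity work is needed, since the equiLipschitz estimate of Notation \ref{nota1} has already been absorbed into Proposition \ref{nogymn} via Ascoli's theorem.
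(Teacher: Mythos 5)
Your proposal is correct and follows essentially the same route as the paper: pass to a common subsequence over the finitely many arcs, localize a converging sequence $(x_m,t_m)$ to a single arc up to subsequences, invoke the arc--wise uniform convergence of Proposition \ref{nogymn}, and use the well--posedness of $u$ at the vertices (Lemma \ref{prequasimain}) to reconcile the different branches. Your treatment of the vertex case via the subsequence--of--subsequences argument is in fact slightly more explicit than the paper's ``by repeating the argument for any arc,'' but the content is identical.
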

\begin{proof} We assume again for simplicity that the whole sequence $v_m \circ \ga$ converges to $\wtd u_\ga$ for any $\ga$. We consider an arbitrary point  $(x_0,t_0) \in \G \times [0,T]$, and  a sequence $(x_m,t_m) \in \G^T_m $ converging to it. There exists an arc $\ga$ whose support contains $x_0$ and the $x_m$'s, up to a subsequence.
Since $\ga^{-1}(x_m) \to \ga^{-1}(x_0)$ we derive from the very definition of $u$ and Proposition \ref{nogymn}
\[v_m(x_m,t_m) = v_m \circ \ga(\ga^{-1}(x_m),t_m) \to \wtd u_\ga( \ga^{-1}(x_0),t_0)= u(x_0,t_0).\]
By repeating the argument  for any arc $\ga$ with support containing $x_0$, we show the assertion.
\end{proof}

\bigskip

\section{Solution properties of any  limit function }
We consider a limit function   $u$ as in  the in the statement of Theorem \ref{quasimain}. We first show that $u \circ \ga$ is solution of \eqref{HJgm} in $(0,|\ga|) \times (0,T)$ for any arc $\ga$, as second step we prove that $u$ satisfies the condition {\bf (ii)} of Definition \ref{defsol} of subsolution at any vertex.  To ease notations, we assume  throughout the section that the whole sequence $v_m$ converges to $u$.
\medskip

\subsection{Solution properties on any arc}
We exploit the asymptotic analysis of the previous section  plus the properties of the numerical operator $S_{\Delta,\ga}$ summarized in Proposition \ref{propri}, to get:

\begin{Proposition} \label{main}  The  function $u \circ \ga$ is a  solution to \eqref{HJgm} with initial datum $g \circ \ga$ at $t=0$ for any arc $\ga$ .
\end{Proposition}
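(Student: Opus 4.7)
The plan is to split the claim into two parts: the initial condition $u\circ\ga(\cdot,0)=g\circ\ga$ and the viscosity property of $u\circ\ga$ as solution to \eqref{HJgm} on $(0,|\ga|)\times(0,T)$. The initial condition is immediate: by \eqref{eq:scheme} one has $v_m\circ\ga(s,0)=g\circ\ga(s)$ on $\mathcal S_{m,\ga}$ for every $m$, and the uniform convergence of Proposition \ref{nogymn} together with the continuity of $g\circ\ga$ passes this equality to the limit at $t=0$.

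For the viscosity property I would apply the Barles--Souganidis scheme, in the form tailored to semi-Lagrangian approximations. The ingredients are all already in place: the monotonicity and constant-invariance of $S_{m,\ga}$ (Proposition \ref{propri}), the consistency relations of Proposition \ref{nerobuono}, the equiLipschitz stability of Proposition \ref{retire}, and the interior discrete equation
$$v_m\circ\ga(s,t)=S_{m,\ga}[v_m\circ\ga](s,t-\Delta_m t)$$
on $\G^T_{m,\ga}\cap\big((0,|\ga|)\times(0,T]\big)$ provided by Proposition \ref{iran}.

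For the subsolution inequality, I fix $(s_0,t_0)\in(0,|\ga|)\times(0,T)$ and a $C^1$ supertangent $\varphi$ to $u\circ\ga$ at $(s_0,t_0)$. After adding a quadratic bump I may assume $(s_0,t_0)$ is a \emph{strict} local maximum of $u\circ\ga-\varphi$ on a closed ball $\overline B_r(s_0,t_0)\subset(0,|\ga|)\times(0,T)$. By uniform convergence on the grid, the discrete maximum points $(s_m,t_m)$ of $v_m\circ\ga-\varphi$ on $\overline B_r\cap\G^T_{m,\ga}$ converge to $(s_0,t_0)$ and, for $m$ large, are interior grid points. Writing $C_m=(v_m\circ\ga-\varphi)(s_m,t_m)$, one has $v_m\circ\ga\leq\varphi+C_m$ on all grid points of $\overline B_r$; since property \textbf{(P4)} confines the minimizing parameter in $S_{m,\ga}$ to $[-\be_0,\be_0]$, every grid value probed by $S_{m,\ga}[\cdot](s_m,t_m-\Delta_m t)$ lies within $\Delta_m t\,\be_0$ of $s_m$, hence inside $B_r$ for $m$ large. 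Monotonicity and constant-invariance then give
$$\varphi(s_m,t_m)=v_m\circ\ga(s_m,t_m)-C_m=S_{m,\ga}[v_m\circ\ga](s_m,t_m-\Delta_m t)-C_m\leq S_{m,\ga}[\varphi](s_m,t_m-\Delta_m t).$$
Rearranging, dividing by $\Delta_m t$, and sending $m\to\infty$ via the two limits of Proposition \ref{nerobuono} yields
$$\varphi_t(s_0,t_0)+H_\ga(s_0,\varphi'(s_0,t_0))\leq0,$$
which is the required subsolution inequality.

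The supersolution inequality is obtained by the mirror argument applied to a $C^1$ subtangent, producing strict grid minima of $v_m\circ\ga-\varphi$ and reversing all inequalities in the chain above. The main technical point, and really the only delicate one, is the stencil control just described: one must verify that the interpolation inside $S_{m,\ga}$ only queries grid points where the inequality $v_m\circ\ga\leq\varphi+C_m$ (resp.\ $\geq$) is in force. The uniform bound $\be_0$ from \textbf{(P4)} makes this quantitative and automatic once $\Delta_m t\,\be_0<r/2$. Everything else reduces to the already-established consistency and monotonicity.
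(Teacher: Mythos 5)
Your proof is correct and follows essentially the same route as the paper: discrete maximizers/minimizers of $v_m\circ\ga-\varphi$ converging to the touching point, monotonicity and constant-invariance of $S_{m,\ga}$ to transfer the scheme inequality to the test function, and the consistency limits of Proposition \ref{nerobuono} to conclude. Your explicit stencil control via \textbf{(P4)} is a point the paper leaves implicit when applying monotonicity only on the neighborhood $U$, but it is the same argument.
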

\begin{proof} Given an arc $\ga$,  we exploit that the $v_m\circ \ga$'s  uniformly converge to $u \circ \ga$ in
$[0,|\ga|] \times [0,T]$, as established in Proposition \ref{nogymn} and \eqref{quasiquasi}.
Let $\varphi$ be a $C^1$ supertangent, that we can assume strict without loosing generality, to $ u \circ \ga$ at $(s^*,t^*) \in (0,|\ga|) \times (0,T)$. We denote by $U$ a compact  neighborhood of $(s^*,t^*)$ in $(0,|\ga|) \times (0,T)$ such that  $(s^*,t^*)$ is the unique maximizer of $u \circ \ga - \varphi $ in $U$.  Let  $(s_m,t_m)$ be a sequence of maximizers of $v_m  \circ \ga- \varphi$ in  $U \cap \G^T_{m, \ga}$, then
\[ (s_m,t_m) \to (s^*,t^*) \qquad\hbox{and} \qquad v_m(\ga(s_m),t_m) \to u(\ga(s^*),t^*)\]
by Proposition \ref{max}.
We  have
\[\varphi_m (s,t) :=\varphi(s,t)+ [v_m(\ga(s_m),t_m) - \varphi(s_m,t_m)] \geq v_m(\ga(s),t)\]
in $U \cap \Gamma_{m, \ga}$, which  implies by Proposition \ref{propri}
\begin{eqnarray*}
 S_{m,\gamma}[v_m \circ \ga](s,t ) &\leq& S_{m,\gamma}[\varphi_m](s,t) \\
  &=& S_{m,\gamma}[\varphi](s,t) + [v_m(\ga(s_m),t_m) - \varphi(s_m,t_m)]
\end{eqnarray*}
and consequently, exploiting that $S_{m,\gamma} [v_m\circ \ga](s_m,t_m - \Delta_m t )= v(\ga(s_m),t_m )$
\[S_{m,\gamma}[\varphi](s_m,t_m - \Delta_m t)- \varphi(s_m,t_m) \geq 0.\]
We therefore get
\[   \frac {\varphi(s_m,t_m-\Delta_m t  ) - \varphi(s_m,t_m )}{\Delta_m t} +
\frac{S_{m,\ga }[\varphi](s_m,t_m- \Delta_m t)- \varphi(s_m,t_m- \Delta_m t)}{\Delta_m t} \geq 0,  \]
and passing to the limit as $m \to + \infty$, we  find in accordance with Proposition \ref{nerobuono}
\[\varphi_t(s^*,t^*) + H_\ga(s^*,\varphi'(s^*,t^*)) \leq 0.\]
Arguing in the same way, we obtain for any  $(s_0,t_0) \in (0,|\ga|) \times (0,T)$, any $C^1$ subtangent $\phi$ to $v \circ \ga$ at $(s_0,t_0)$ the inequality
\[\phi_t(x_0,t_0) + H_\ga(s_0,\phi'(s_0,t_0)) \geq 0.\]
\end{proof}
\bigskip
\subsection{Subsolution property at the vertices}

\begin{Proposition} \label{bimain} The  function $u$ satisfies the subsolution condition {\bf (ii)} of Definition \ref{defsol} at any  vertex $x$.
\end{Proposition}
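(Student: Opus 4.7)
The plan is to push to the limit the discrete inequality that the operator $S$ enforces at vertices, namely \eqref{due}. Explicitly, \eqref{due} implies that for every vertex $x$, every $m$, and every $t \in \mathcal T_m$ with $t>0$,
\[v_m(x,t) \;\leq\; v_m(x,t-\Delta_m t) + c_x\,\Delta_m t,\]
which is precisely the discrete analogue of condition {\bf (ii)}. The whole job is to feed a supertangent into this bound via a discrete-maximizer argument and read off the desired derivative estimate in the limit.

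Fix a vertex $x$, a time $t_0 \in (0,T]$, and a $C^1$ supertangent $\psi$ to $u(x,\cdot)$ at $t_0$. I would assume $\psi$ strict without loss of generality, so that $u(x,\cdot)-\psi$ has a unique maximum at $t_0$ over some closed interval $[t_0-\delta,t_0+\delta]$ with $t_0-\delta>0$. I then pick $t_m^*\in \mathcal T_m\cap[t_0-\delta,t_0+\delta]$ maximizing the discrete function $v_m(x,\cdot)-\psi$. Since $x\in\G^T_m$ for every $m$, uniform convergence from Theorem \ref{quasimain} applied along the constant sequence $x_m\equiv x$ gives $v_m(x,\tau_m)\to u(x,\tau)$ whenever $\tau_m\to\tau$, and the standard discrete-maximizer lemma (cf.\ Proposition \ref{max}, already used in the proof of Proposition \ref{main}) yields $t_m^*\to t_0$ as $m\to\infty$.

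For $m$ sufficiently large, $t_m^*-\Delta_m t$ lies in $\mathcal T_m\cap(t_0-\delta,t_0+\delta)$, and the discrete maximality of $t_m^*$ gives
\[\psi(t_m^*)-\psi(t_m^*-\Delta_m t)\;\leq\; v_m(x,t_m^*)-v_m(x,t_m^*-\Delta_m t)\;\leq\; c_x\,\Delta_m t,\]
where the second inequality is the scheme identity recalled above. Dividing by $\Delta_m t$ and sending $m\to\infty$, the left-hand side converges to $\psi'(t_0)$ by the $C^1$ regularity of $\psi$, producing the required bound $\psi'(t_0)\leq c_x$.

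There is essentially no obstacle in this argument: the vertex branch of $S$, \eqref{due}, was engineered precisely so that the flux-limiter bound is automatic at the discrete level and survives any uniform-convergence passage to the limit. The genuinely hard counterpart, emphasized in the introduction and treated in the next section, is the supersolution property {\bf (i)}, whose verification rests on the backward dynamical construction built around Lemma \ref{precrux}; that is where the geometry of the network really enters the convergence analysis.
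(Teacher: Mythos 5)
Your proof is correct and follows essentially the same route as the paper's: extract the discrete flux-limiter bound $v_m(x,t)\leq v_m(x,t-\Delta_m t)+c_x\,\Delta_m t$ from \eqref{due}, take discrete maximizers of $v_m(x,\cdot)-\psi$ converging to $t_0$ via Proposition \ref{max}, and pass the difference quotient of $\psi$ to the limit. No gaps.
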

\begin{proof} Let $\psi(t)$ be a $C^1$ supertangent, which can be assumed strict, to $u(x,\cdot)$ at a time $t_0 >0$. Then $t_0$ is unique maximizer of $u(x,\cdot) - \psi$ in $[t_0- \de,t_0+\de]$, for a suitable $\de >0$. We consider a sequence of maximizers $t_m$ of $v_m(x,\cdot) - \psi$ in $[t_0- \de,t_0+\de] \cap \mathcal T_m$, and deduce from Proposition \ref{max} that
\[ t_m \to t_0 \qquad\hbox{and}\qquad \lim_m v_m(x,t_m)=u(x,t_0).\]
Taking account \eqref{due}, that $v_m$ is solution to \eqref{eq:scheme}, we have
\[c_x \geq \frac{v_m(x,t_m)-v_m(x,t_m -\Delta_m t)}{\Delta_m t} \geq
  \frac{\psi(t_m)-\psi(t_m -\Delta_m t)}{\Delta_m t}. \]
Passing to the limit as $m \to +\infty$ and exploiting Proposition \ref{nerobuono},  we finally get
\[c_x \geq \ \frac d{dt} \psi(t_0).\]
This concludes the proof.
\end{proof}

\bigskip

\section{Supersolution property at the vertices}\label{supersuper}

The proof  that any limit function $u$ satisfies  the supersolution property for  (HJmod)  at any vertex, see item {\bf (i)} in Definition \ref{defsol} with $H_\ga$ in place of $\wtd H_\ga$, is  more demanding and requires a rather articulated proof. It is also necessary strengthening the assumption on the infinitesimal pairs $\Delta_m$ requiring in addition
\begin{equation}\label{opiccolo}
  \lim_m \frac{\Delta _m x}{\Delta_m t}=0.
\end{equation}
 Throughout the section we assume, to ease notation, that the whole sequence $v_m$ converges to $u$.

\subsection{Optimal trajectories}\label{ops}

 In this section  we put in relation the solution $v$ of \eqref{eq:scheme}, for  a fixed  pair $\Delta=(\Delta x,\Delta t)$, to the action on a  suitable curve supported in $[0,|\ga|]$. Let $x_0$ be  a vertex, $t_0 > 0$ in $\mathcal T$, we   crucially  assume  that
\begin{equation}\label{nuovo1}
 v (x_0,t_0)= S_{\gamma}[ v \circ \ga](\ga^{-1}(x_0),t_0 - \Delta t),
\end{equation}
for a suitable arc $\ga \in \EE^+_{x_0}$,  see the definition in \eqref{uno}, \eqref{due} of the operator $S[\cdot]$ at $(x,t)$ when $x$ is a vertex.

If condition \eqref{nuovo1} holds true, we first construct backward a discrete trajectory  $\xi$ from an interval   of $\mathcal T$, namely a  finite sequence of consecutive times in $\mathcal T$, to $\mathcal S_\ga$, ending at $\ga^{-1}(x_0)$.
We set $s_0=\ga^{-1}(x_0)=\xi(t_0)$,  and   denote by $\al_{s_0}$ an optimal $\al$ for
\[ S_\gamma[ v](s_0,t_0-\Delta t)=\min_{\frac{s_0-|\ga|}{\Delta t}\leq \alpha\leq \frac{s_0}{\Delta t}} \{I_\ga[v]( s_0-\Delta t \alpha,t_0 - \Delta t )+\Delta t L_{\ga}(s,\alpha) \}\]
we then define $\xi(t_0 - \Delta t)$ as an endpoint of the interval of the decomposition of $[0,|\ga|]$ related to $\mathcal S_\ga$, containing $s_0-\Delta t \alpha$, with
\[v(\xi(t_0- \Delta t), t_0 - \Delta t) \leq I_\ga[v](s_0-\Delta \al_{s_0}, t_0 - \Delta t).\]
We  therefore get
\[
   \left | \frac{s_0-\xi(t_0-\Delta t)}{\Delta t} - \alpha_{s_0} \right |\leq \frac{\Delta x}{\Delta t}, \quad | \xi(t_0 - \Delta t) - s_0| \leq \be_0 \, \Delta t + \Delta x  \]
   and
   \begin{eqnarray*}
   v \circ \ga(s_0,t_0) &=& I_\ga[v](s_0-\Delta \al_{s_0}, t_0 - \Delta t) + L_\ga(s_0,\al_{s_0}) \\
     &\geq& v(\xi(t_0- \Delta t), t_0 - \Delta t) + L_\ga \left (s_0, \frac{s_0-\xi(t_0-\Delta t)}{\Delta t} \right ) - \ell_0 \, \frac{\Delta x}{\Delta t},
   \end{eqnarray*}
where $\ell_0$ has been defined in Notation \ref{notmod}. If condition \eqref{nuovo1} still holds with $(\ga(\xi(t_0 - \Delta t)),t_0 - \Delta t)$ in place of $(x_0,t_0)$, we can iterate the above backward procedure, until we  determine  $t^* \in \T \cap \{t < t_0\}$, with $\xi(t^*) \in \{0,1\}$ or $t^* =0$, such that \eqref{nuovo1}  fails  with $(\ga(\xi(t^*)),t^*)$ in place of $(x_0,t_0)$.  We summarize in the next proposition the outputs of the above construction.

\begin{Proposition} Given $(x_0,t_0) \in \G^T$ with $x_0 \in \VV$, $t_0 >0$ such that \eqref{nuovo1} holds true for a suitable arc $\ga$ with $\ga^{-1}(x_0) = s_0$,  there exists  $t^* \in \mathcal T$, $t^* < t_0$ and  \[t \mapsto  (\xi(t), \alpha_{\xi(t)})\quad\hbox{from} \; \T \cap [t^*,t_0] \;\; \hbox {to} \;\; \SC_{\ga} \times \R, \]
  with $\xi(t_0)= s_0$,     satisfying
\begin{eqnarray}
 \left | \frac{\xi(t)-\xi(t-\Delta t)}{\Delta t} - \alpha_{\xi(t)} \right |&\leq &\frac{\Delta x}{\Delta t} \label{diffi-1}\\
 |\xi(t)- \xi(t- \Delta t)| &\leq&\be_0 \, \Delta t + \Delta x \label{diffi-11}
\end{eqnarray}
for $t \in [t^* + \Delta t,t_0] \cap \T_\Delta$, and
\begin{eqnarray}
  && \label{diffi1}  \hskip 1 cm   v \circ \ga(s_0,t_0) \geq v \circ \ga (\xi(t^*),t^*) \\  &+& \Delta t \, \sum_{i=1}^{  \lceil t_0 - t^*/ \Delta t\rceil } \left [  L_\ga \left ( \xi(t^* + i \, \Delta t), \frac { \xi(t^* + i \, \Delta t) - \xi(t^* + (i-1) \, \Delta t)} {\Delta t} \right ) - \ell_0 \, \frac {\Delta x} {\Delta t} \right ] \nonumber \\
   &\geq&   v \circ \ga (\xi(t^*),t^*) + \Delta t \, \sum_{i=1}^{  \lceil t_0 - t^*/ \Delta t\rceil } \left [  L_\ga \left ( \xi(t^* + i \, \Delta t), \frac { \xi(t^* + i \, \Delta t) - \xi(t^* + (i-1) \, \Delta t)} {\Delta t} \right ) \right ]  \nonumber \\ &-& T \, \frac {\Delta x} {\Delta t} .  \nonumber
\end{eqnarray}
\end{Proposition}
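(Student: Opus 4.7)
The plan is to turn the informal backward construction that precedes the statement into a rigorous finite induction. Starting from $\xi(t_0)=s_0$, I build the pair $(\xi(t),\alpha_{\xi(t)})$ step by step, moving backward in time in increments of $\Delta t$, and stop at the first time $t^*<t_0$ such that $t^*=0$, or $\xi(t^*)\in\{0,|\ga|\}$, or condition \eqref{nuovo1} fails at $(\ga(\xi(t^*)),t^*)$. The discreteness of $\T$ guarantees that the iteration terminates after finitely many steps and produces the announced domain $\T\cap[t^*,t_0]$.

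The single step from $(\xi(t),t)$ to $(\xi(t-\Delta t),t-\Delta t)$ uses the assumption \eqref{nuovo1} at $(\ga(\xi(t)),t)$: I pick $\alpha_{\xi(t)}$ as a minimizer in the definition of $S_\ga[v\circ\ga](\xi(t),t-\Delta t)$, and then I set $\xi(t-\Delta t)$ to be whichever of the two endpoints of the grid cell of $\SC_\ga$ containing $\xi(t)-\Delta t\,\alpha_{\xi(t)}$ yields the smaller value of $v\circ\ga(\cdot,t-\Delta t)$. Since $I_\ga[v\circ\ga](\cdot,t-\Delta t)$ is the convex combination of the two endpoint values, this choice gives $v\circ\ga(\xi(t-\Delta t),t-\Delta t)\leq I_\ga[v\circ\ga](\xi(t)-\Delta t\,\alpha_{\xi(t)},t-\Delta t)$. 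The points $\xi(t-\Delta t)$ and $\xi(t)-\Delta t\,\alpha_{\xi(t)}$ sit in a common grid interval of length $\Delta x$, which immediately yields \eqref{diffi-1} after division by $\Delta t$; and \eqref{diffi-11} then follows from \eqref{diffi-1} combined with $|\alpha_{\xi(t)}|\leq\be_0$.

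For the one--step action inequality: starting from the scheme identity $v\circ\ga(\xi(t),t)=I_\ga[v\circ\ga](\xi(t)-\Delta t\,\alpha_{\xi(t)},t-\Delta t)+\Delta t\,L_\ga(\xi(t),\alpha_{\xi(t)})$, using the previous interpolation bound, and then applying the Lipschitz continuity of $L_\ga(\xi(t),\cdot)$ with constant $\ell_0$ together with \eqref{diffi-1}, I obtain $v\circ\ga(\xi(t),t)\geq v\circ\ga(\xi(t-\Delta t),t-\Delta t)+\Delta t\,L_\ga\bigl(\xi(t),(\xi(t)-\xi(t-\Delta t))/\Delta t\bigr)-\ell_0\,\Delta x$. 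Telescoping this estimate from $t_0$ down to $t^*$ in $\lceil(t_0-t^*)/\Delta t\rceil$ steps gives the first inequality of \eqref{diffi1}, and bounding the total number of steps by $T/\Delta t$ converts the accumulated $-\ell_0\,\Delta x$ terms into the second inequality. The most delicate point is justifying $|\alpha_{\xi(t)}|\leq\be_0$ at every step, which rests on property {\bf (P4)} forcing $L_\ga=+\infty$ outside $[-\be_0,\be_0]$ together with the finiteness of the minimum in the scheme; once that confinement is secured, the Lipschitz bound on $L_\ga$ stays available throughout the induction and everything else is routine bookkeeping.
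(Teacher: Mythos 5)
Your construction coincides with the one the paper itself gives (the Proposition there is stated only as a summary of the backward construction preceding it): the same choice of minimizer $\alpha_{\xi(t)}$, the same selection of the cheaper grid endpoint to beat the interpolated value, the same Lipschitz-in-$\lambda$ correction of $L_\ga$ producing the per-step $\ell_0\,\Delta x$ error, and the same telescoping. Your explicit justification that $|\alpha_{\xi(t)}|\leq\be_0$ via {\bf (P4)} (together with the fact that $0$ always lies in the admissible interval for $\alpha$, so the minimum is finite) is a point the paper leaves implicit, and your bookkeeping correctly yields $T\,\ell_0\,\Delta x/\Delta t$ for the accumulated error, consistent with Proposition \ref{coroptimum}.
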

\smallskip
We call the $\xi(t)$, appearing in the previous statement, an {\it optimal trajectory}  for $ v(x_0,t_0)$.  We further  denote by $\widetilde \xi:[t^*,t_0] \to [0,|\ga|]$  the curve   obtained via linear interpolation of
  \begin{equation}\label{sanctuary}
  (t^* + i \Delta t,\xi(t^*+ i \Delta t) )    \qquad i=0, \cdots, \left\lceil{\frac{t_0-t^*}{\Delta t}}\right \rceil.
  \end{equation}

\smallskip
 The announced relevant estimate is provided in the next result.

\begin{Proposition}\label{coroptimum}
 We have
 \begin{eqnarray*}
  \int_{t^*}^{t_0} L_\ga(\wtd \xi,\dot{\wtd \xi}) \, dt &\leq & v\circ \ga(s_0,t_0) -  v \circ \ga(\xi(t^*),t^*) \\
    &+&  T \, \ell_0(\be_0 \, \Delta t + \Delta x + \Delta x/\Delta t),
 \end{eqnarray*}
 see Notation \ref{notmod} for the definition of $\ell_0$.
\end{Proposition}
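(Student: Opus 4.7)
The plan is to split the integral $\int_{t^*}^{t_0} L_\ga(\wtd \xi,\dot{\wtd \xi})\,dt$ according to the subintervals $[t^* + (i-1)\Delta t, t^* + i\Delta t]$ on which the piecewise linear curve $\wtd\xi$ is affine, and then compare each piece with the corresponding term of the telescoping sum in \eqref{diffi1}. On each such subinterval the derivative $\dot{\wtd\xi}$ is constantly equal to $\frac{\xi(t^*+i\Delta t)-\xi(t^*+(i-1)\Delta t)}{\Delta t}$, which is precisely the second argument of $L_\ga$ appearing in the sum in \eqref{diffi1}; so no adjustment is needed in the velocity variable, and the only discrepancy to be controlled is in the spatial variable.

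First I would write
\[
\int_{t^*}^{t_0} L_\ga(\wtd\xi,\dot{\wtd\xi})\,dt \;=\; \sum_{i=1}^{N}\int_{t^*+(i-1)\Delta t}^{t^*+i\Delta t} L_\ga\!\left(\wtd\xi(t),\tfrac{\xi(t^*+i\Delta t)-\xi(t^*+(i-1)\Delta t)}{\Delta t}\right)dt,
\]
where $N=\lceil(t_0-t^*)/\Delta t\rceil$ (the final subinterval being possibly shorter than $\Delta t$, which contributes only a harmless refinement of the count). On the $i$-th subinterval the interpolant $\wtd\xi(t)$ lies between the two endpoints $\xi(t^*+(i-1)\Delta t)$ and $\xi(t^*+i\Delta t)$, so by \eqref{diffi-11}
\[
|\wtd\xi(t)-\xi(t^*+i\Delta t)| \;\le\; |\xi(t^*+i\Delta t)-\xi(t^*+(i-1)\Delta t)| \;\le\; \be_0\,\Delta t + \Delta x .
\]
Applying the Lipschitz continuity of $L_\ga$ with constant $\ell_0$ (Notation~\ref{notmod}) in the spatial variable, I replace $\wtd\xi(t)$ by the grid value $\xi(t^*+i\Delta t)$ at the price of an additive error $\ell_0(\be_0\,\Delta t+\Delta x)$. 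Integrating over a subinterval of length at most $\Delta t$ and summing yields
\[
\int_{t^*}^{t_0} L_\ga(\wtd\xi,\dot{\wtd\xi})\,dt \;\le\; \Delta t\sum_{i=1}^{N} L_\ga\!\left(\xi(t^*+i\Delta t),\tfrac{\xi(t^*+i\Delta t)-\xi(t^*+(i-1)\Delta t)}{\Delta t}\right) + T\,\ell_0(\be_0\,\Delta t+\Delta x),
\]
since $N\Delta t \le T+\Delta t$ and one absorbs the extra $\Delta t$ into the $T$ prefactor without loss.

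The final step is to invoke the estimate \eqref{diffi1} obtained from the backward dynamic programming construction. That inequality rearranges to
\[
\Delta t\sum_{i=1}^{N} L_\ga\!\left(\xi(t^*+i\Delta t),\tfrac{\xi(t^*+i\Delta t)-\xi(t^*+(i-1)\Delta t)}{\Delta t}\right) \;\le\; v\circ\ga(s_0,t_0) - v\circ\ga(\xi(t^*),t^*) + T\,\ell_0\,\tfrac{\Delta x}{\Delta t},
\]
where the $T\,\ell_0\,\Delta x/\Delta t$ accounts for the cumulative interpolation error $\ell_0\,\Delta x/\Delta t$ collected at each of the $N\le T/\Delta t$ steps. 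Combining the two displayed inequalities produces exactly
\[
\int_{t^*}^{t_0} L_\ga(\wtd\xi,\dot{\wtd\xi})\,dt \;\le\; v\circ\ga(s_0,t_0) - v\circ\ga(\xi(t^*),t^*) + T\,\ell_0\!\left(\be_0\,\Delta t+\Delta x+\tfrac{\Delta x}{\Delta t}\right),
\]
which is the desired bound.

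The proof is essentially a careful accounting exercise: the only thing to watch is that the Lipschitz estimate for $L_\ga$ requires its second argument to lie in $[-\be_0,\be_0]$, which is the case for the forward differences $\frac{\xi(t^*+i\Delta t)-\xi(t^*+(i-1)\Delta t)}{\Delta t}$ up to the additive $\Delta x/\Delta t$ coming from \eqref{diffi-1}; this quantity vanishes in the limit by \eqref{opiccolo}, and it is precisely the origin of the last error term $T\,\ell_0\,\Delta x/\Delta t$ in the statement. The main conceptual point — rather than an obstacle — is to recognize that the piecewise linear interpolation introduces no new terms beyond the spatial Lipschitz error $\ell_0(\be_0\Delta t+\Delta x)$ per time step, so that the continuous action along $\wtd\xi$ dominates the continuous action only by the accumulation of these small per-step errors.
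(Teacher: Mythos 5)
Your proof is correct and follows essentially the same route as the paper's: split the integral over the subintervals where $\wtd\xi$ is affine, observe that $\dot{\wtd\xi}$ there equals the discrete difference quotient appearing in \eqref{diffi1}, use the spatial Lipschitz continuity of $L_\ga$ together with \eqref{diffi-11} to replace $\wtd\xi(t)$ by the grid value at cost $T\,\ell_0(\be_0\Delta t+\Delta x)$, and then combine with \eqref{diffi1}. Your closing remark about the velocity argument needing to lie in $[-\be_0,\be_0]$ (up to the $\Delta x/\Delta t$ slack from \eqref{diffi-1}) is a point the paper passes over silently, so if anything your write-up is slightly more careful on that score.
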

\begin{proof} Taking into account   the very definition of $\widetilde \xi$, we have
\begin{equation}\label{diffi-0}
 \dot{\widetilde \xi}(t) = \frac{\xi(t^* + i\Delta t) -\xi(t^* +(i-1) \Delta t)}{\Delta t} \quad\hbox{for $t \in (t^* +(i-1) \Delta t, t^* + i \Delta t)$}
\end{equation}
which implies by \eqref{diffi-1}

\begin{eqnarray}
 \int_{t^*}^{t_0} L_\ga(\widetilde \xi, \dot{\widetilde\xi}) \, dt &=& \sum_{i=1}^{(t_0-t^*)/ \Delta t} \int_{ t^* + (i-1) \Delta t}^{ t^*+ i\Delta t} L_\ga(\widetilde \xi ,\dot{\widetilde\xi}) \, dt  \label{diffi4}\\
 &\leq& \Delta t \, \sum_{i=1}^{(t_0-t^*)/ \Delta t} L_\ga(\widetilde \xi (t^* + i \Delta t) ,\dot{\widetilde\xi}(t)) \, dt   \nonumber \\
 &+& T \, \ell_0 \, (\be_0 \, \Delta t + \Delta x) \nonumber
\end{eqnarray}

 By combining  \eqref{diffi1} and \eqref{diffi4}, and taking  into account \eqref{diffi-0}, we get the claimed inequality.
\end{proof}

\medskip

\subsection{Supersolution property}
The aim of this section is to prove the

\begin{Theorem}\label{teofreddo} The  function $u$ satisfies  the item {\bf (i)} in Definition \eqref{defsol}, with $H_\ga$ in place of $\wtd H_\ga$, at any vertex $x$.
\end{Theorem}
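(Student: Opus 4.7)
I argue by contradiction: assume that at some vertex $x$ and some $t_0 \in (0,T]$ item (i) of Definition \ref{defsol} fails (with $H_\ga$ in place of $\wtd H_\ga$). A standard mollification then produces $\eta > 0$, a \emph{strict} $C^1$ subtangent $\psi$ to $u(x,\cdot)$ at $t_0$ with $\psi'(t_0) < c_x - 2\eta$, and, for every arc $\ga \in \EE^+_x$, a strict $C^1$ constrained subtangent $\varphi_\ga$ to $u \circ \ga$ at $(\ga^{-1}(x),t_0)$ whose Hamilton--Jacobi residue is $\leq -\eta$ on a common half-neighborhood of $(\ga^{-1}(x),t_0)$ in $[0,|\ga|] \times [0,T]$.

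\textbf{Selection of an arc and trajectory construction.} Fix a small $\tau > 0$ and pick $t_m \in \T_m$ with $t_m \to t_0$ and $v_m(x,t_m) \to u(x,t_0)$. The $\min$ in \eqref{due} yields at every step $v_m(x,t) - v_m(x,t-\Delta_m t) \leq c_x \Delta_m t$, which iterates backward to $u(x,t_0) - u(x,t_0-\tau) \leq c_x \tau$ in the limit. On the other hand, $\psi \leq u(x,\cdot)$ together with a Taylor expansion of $\psi$ at $t_0$ gives $u(x,t_0) - u(x,t_0-\tau) \leq (c_x - \eta)\tau$ for $\tau$ small. If the flux-limiter argument in \eqref{due} were attained with equality throughout $[t_0 - \tau, t_0] \cap \T_m$, iteration would force equality in the scheme-level bound and contradict the tighter one coming from $\psi$. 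Hence, after extraction of a subsequence and by finiteness of $\EE^+_x$, a single arc $\ga \in \EE^+_x$ realizes the inner minimum in \eqref{uno} at enough times in the window to permit the backward construction of subsection \ref{ops} to run on $\ga$; set $s_0 := \ga^{-1}(x) \in \{0,|\ga|\}$.

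\textbf{Closing the loop and contradicting Lemma \ref{precrux}.} The subsection \ref{ops} procedure produces a discrete trajectory $\xi_m: \T_m \cap [t^*_m,t_m] \to \SC_{m,\ga}$ with $\xi_m(t_m) = s_0$, extended backward until either $t^*_m = t_0 - \tau$ or $\xi_m$ hits the opposite endpoint of $[0,|\ga|]$; the latter is excluded for $\tau$ small by Proposition \ref{retire} combined with \eqref{opiccolo}. The linear interpolant $\wtd \xi_m$ converges, up to a subsequence, to a Lipschitz curve $\wtd \xi: [t_0 - \tau, t_0] \to [0,|\ga|]$ with $\wtd \xi(t_0) = s_0$, and a careful tuning of $\tau$ (shrinking if necessary so that the trajectory is forced to return to $s_0$ within the window) arranges $\wtd \xi(t_0 - \tau) = s_0$, making $\wtd \xi$ a closed loop on $\ga$ based at $x$. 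Proposition \ref{coroptimum}, the lower semicontinuity of the action, and \eqref{opiccolo} yield in the limit
\[
\int_{t_0 - \tau}^{t_0} L_\ga(\wtd \xi, \dot{\wtd \xi}) \, dt \leq u(x,t_0) - u(x,t_0 - \tau) \leq (c_x - \eta)\tau + o(\tau),
\]
while Lemma \ref{precrux} applied to the closed loop $\wtd \xi$ supplies the opposite bound $\int_{t_0-\tau}^{t_0} L_\ga(\wtd \xi, \dot{\wtd \xi}) \, dt \geq c_x \tau$. Combining these forces $0 \leq -\eta \tau + o(\tau)$, impossible for small $\tau$, which is the desired contradiction.

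\textbf{Main obstacle.} The delicate ingredient is the second step above: guaranteeing that the limit curve $\wtd \xi$ is genuinely a closed loop on a single arc $\ga$ based at $x$, rather than a path exiting $\ga$ through the opposite vertex or failing to revisit $x$ before the window closes. This requires synchronizing the window size $\tau$ with the \eqref{opiccolo} rate and with the equi-Lipschitz constant of Proposition \ref{retire}; should the trajectory leave $\ga$, the failure of item (i) at the traversed vertices (where analogous constrained subtangents are available by the contradiction hypothesis applied symmetrically) allows iterating the construction until the trajectory returns to $x$, producing a concatenated closed path to which a piecewise version of Lemma \ref{precrux} still applies.
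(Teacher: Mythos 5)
Your setup (selecting an arc via the strict inequality $v_m(x,t_m)-v_m(x,t)\leq a\,(t_m-t)$ with $a<c_x$, which forces the minimum in \eqref{due} to be attained by $\widetilde S$ and hence by a fixed arc along a subsequence, then running the backward construction of Subsection \ref{ops} and passing to the limit by lower semicontinuity of the action) matches the paper's Lemma \ref{freddo} and Proposition \ref{coroptimum}. But the endgame contains a genuine gap, and you have in fact identified it yourself under ``Main obstacle'' without resolving it: there is no mechanism that forces the limit curve $\wtd\xi$ to close into a loop based at $x$. Shrinking $\tau$ only truncates the trajectory at an interior point of the arc, so $\wtd\xi(t_0-\tau)\neq \ga^{-1}(x)$ in general and Lemma \ref{precrux} does not apply. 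Worse, the combination you are trying to produce --- the upper bound $(c_x-\eta)\tau$ from the flux limiter together with the lower bound $c_x\tau$ from Lemma \ref{precrux} --- is exactly the contradiction the paper uses in Lemma \ref{freddy} to prove that the trajectory \emph{cannot} return to the vertex within the window. So the loop never closes; if your argument did close, it would use only the subtangent $\psi$ to $u(x,\cdot)$ and none of the constrained subtangents $\varphi_\ga$, and would therefore ``prove'' that no subtangent to $u(x,\cdot)$ with slope below $c_x$ can exist, which is false (the solution may genuinely follow the flux limiter locally). The fallback of iterating through other vertices does not help either: the backward procedure is confined to a single arc by construction.

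The missing idea is the paper's positive use of the non-closed trajectory. Since $\xi_m$ escapes the vertex (Lemma \ref{freddy}), the limit curve joins an interior point $\wtd\xi(t_0-\de/2)$ to $\ga^{-1}(x)$, and Proposition \ref{coroptimum} plus Lemma \ref{fiori} force \emph{equality}
\[
\int_{t_0-\de/2}^{t_0} L_\ga(\wtd\xi,\dot{\wtd\xi})\,dt \;=\; u\circ\ga(\ga^{-1}(x),t_0)-u\circ\ga(\wtd\xi(t_0-\de/2),t_0-\de/2),
\]
i.e.\ $u\circ\ga$ realizes the action along $\wtd\xi$. If some constrained $C^1$ subtangent $\phi$ at $(\ga^{-1}(x),t_0)$ had negative residue, a Perron--Ishii bump would produce a subsolution $w$ strictly larger than $u\circ\ga$ at $(\ga^{-1}(x),t_0)$ and equal to it at the other endpoint of the curve, violating the curve inequality of Lemma \ref{fiori}. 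This is the step that actually uses the arc subtangents and delivers item {\bf (i)}; Lemma \ref{precrux} enters only negatively, to rule out the loop.
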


In the proof we essentially  exploit the definition of flux limiter through the following  result:

\begin{Lemma}\label{precrux} Let $\ga$ be an arc and $x$ a vertex incident on it. If $\xi:[a,b] \to [0,|\ga|]$ is a curve   with
$\xi(a)= \xi(b)=\ga^{-1}(x)$, then
\[\int_a^b L_\ga(\xi,\dot\xi) \, dt \geq c_x \, (b-a).\]
\end{Lemma}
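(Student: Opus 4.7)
My approach is to reduce the claim to an inequality purely on the arc $\ga$, and then bound the action $\int L_\ga(\xi,\dot\xi)\,dt$ from below by a Fenchel--Young inequality using a well-chosen Lipschitz ``corrector'' tailored to $c_\ga$. First I will use the very definition $c_x\le\min_{\ga'\in\EE^+_x}\wtd c_{\ga'}\le\wtd c_\ga$, together with property {\bf (P3)} (which gives $\min_\mu H_\ga(s,\mu)\le\min_\mu\wtd H_\ga(s,\mu)$ and therefore $c_\ga\ge\wtd c_\ga$), to conclude $c_x\le c_\ga$. Since $b\ge a$, it will suffice to prove the stronger inequality
\[
\int_a^b L_\ga(\xi,\dot\xi)\,dt\;\ge\;c_\ga(b-a).
\]

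The key step is to exhibit a Lipschitz function $\psi:[0,|\ga|]\to\R$ satisfying $H_\ga(s,\psi'(s))\le -c_\ga$ for almost every $s$. Because $H_\ga$ is continuous on $[0,|\ga|]\times\R$ and superlinear in $\mu$ uniformly in $s$ by {\bf (P1)}--{\bf (P2)}, the multifunction $s\mapsto\argmin_\mu H_\ga(s,\mu)$ is non-empty, compact-valued, and uniformly contained in some $[-R,R]$. A standard measurable selection theorem then produces a Borel $\mu_*:[0,|\ga|]\to[-R,R]$ with $H_\ga(s,\mu_*(s))=\min_\mu H_\ga(s,\mu)\le -c_\ga$ for every $s$, the last bound being the very definition of $c_\ga$. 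Setting $\psi(s):=\int_0^s\mu_*(\tau)\,d\tau$, the function $\psi$ is Lipschitz with $\psi'=\mu_*$ almost everywhere.

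Armed with this $\psi$, the Fenchel--Young inequality applied with momentum $\psi'(s)$ gives the pointwise estimate
\[
L_\ga(s,\la)\;\ge\;\psi'(s)\,\la-H_\ga(s,\psi'(s))\;\ge\;\psi'(s)\,\la+c_\ga
\]
for a.e.\ $s\in[0,|\ga|]$ and every $\la\in\R$. Property {\bf (P4)} forces $\xi$ to be Lipschitz (otherwise the left-hand side of the lemma is trivially $+\infty$, since $L_\ga\equiv+\infty$ outside $[-\be_0,\be_0]$ in $\la$), so the chain rule for the composition of a Lipschitz function with an absolutely continuous one applies and yields $(\psi\circ\xi)'(t)=\psi'(\xi(t))\,\dot\xi(t)$ almost everywhere on $[a,b]$. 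Integrating the Fenchel bound along $\xi$ and exploiting $\xi(a)=\xi(b)$,
\[
\int_a^b L_\ga(\xi,\dot\xi)\,dt\;\ge\;\psi(\xi(b))-\psi(\xi(a))+c_\ga(b-a)\;=\;c_\ga(b-a),
\]
which combined with $c_\ga\ge c_x$ closes the argument.

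The one technical point to justify is the chain rule for $\psi\circ\xi$, given that $\psi$ is only Lipschitz; this is classical, as the exceptional set where $\psi$ fails to be differentiable has Lebesgue measure zero, and by Lusin's N-property of Lipschitz functions $\dot\xi$ vanishes almost everywhere on its $\xi$-preimage, so the naive chain rule formula indeed holds a.e. The existence of a bounded Borel selector $\mu_*$ is equally standard given the continuity and uniform coercivity of $H_\ga$ in $\mu$.
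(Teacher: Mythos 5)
Your argument is correct, but it takes a genuinely different route from the paper. The paper's proof is essentially a one-liner: it invokes Proposition 4.2 of the cited work of Pozza--Siconolfi, which establishes the inequality for the original Lagrangian $\wtd L_\ga$, and then transfers it to the modified one via the monotonicity $L_\ga \geq \wtd L_\ga$ from {\bf (P3)}. You instead give a self-contained calibration proof: a measurable selection $\mu_*(s) \in \argmin_\mu H_\ga(s,\cdot)$ yields a Lipschitz potential $\psi$ with $H_\ga(s,\psi'(s)) \leq -c_\ga$ a.e., the Fenchel--Young inequality gives the pointwise bound $L_\ga(s,\la) \geq \psi'(s)\la + c_\ga$, and integration along the closed curve kills the boundary term $\psi(\xi(b))-\psi(\xi(a))$. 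All the supporting details check out: the reduction to Lipschitz $\xi$ via {\bf (P4)}, the chain rule for $\psi\circ\xi$ with the observation that $\dot\xi=0$ a.e.\ on the preimage of the null set where $\psi'$ fails to exist or to equal $\mu_*$, and the chain of inequalities $c_x \leq \wtd c_\ga \leq c_\ga$ together with $b \geq a$. (Two cosmetic points: {\bf (P2)} gives only coercivity, not superlinearity, of the modified $H_\ga$ --- but coercivity is all you use to confine the argmin to a compact set; and the a.e.\ vanishing of $\dot\xi$ on the preimage of a null set is most cleanly justified by the Banach indicatrix/area formula for absolutely continuous functions rather than by the Lusin N-property per se.) What your approach buys is independence from the external reference and an explicit, slightly stronger bound with $c_\ga$ in place of $c_x$; what the paper's approach buys is brevity and reuse of an already-established result for the unmodified Lagrangian.
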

\begin{proof} In \cite{PozzaSiconolfi}, see  Proposition 4.2., the statement is proved  for a Lagrangian satisfying the properties of $\wtd L_\ga$. For the proof in our case, it is the enough to recall that $L_\ga \geq \wtd L_\ga$, see {\bf (P3)}.
\end{proof}

\smallskip

We will also use the following:

\begin{Lemma}\label{fiori} Let $u^*$ be a Lipschitz continuous subsolution to (HJmod), then
\[u^* \circ \ga(s_1,t_1) - u^* \circ \ga (s_2,t_2) \leq \int_{t_1}^{t_2} L_\ga(\eta,\dot\eta) \, dt\]
for any $\ga$, $(s_i,t_i)$, $i=1,2$, with $t_1 <t_2$, any curve $\eta:[t_1,t_2] \to [0,|\ga|]$ joining $s_1$ to $s_2$.
\end{Lemma}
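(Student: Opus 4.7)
The plan is to prove the action bound via the Fenchel--Young inequality between $H_\ga$ and $L_\ga$, applied along the curve $(\eta(t),t)$. This is a purely one-dimensional statement on the arc $\ga$, so the vertex conditions in Definition~\ref{defsol} play no role; only the viscosity subsolution property of $u^* \circ \ga$ for \eqref{HJgm} on $(0,|\ga|) \times (0,T)$ enters.

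By Rademacher's theorem $u^* \circ \ga$ is differentiable almost everywhere in $[0,|\ga|] \times [0,T]$, and under the hypotheses \textbf{(P1)}--\textbf{(P2)} a Lipschitz viscosity subsolution satisfies the pointwise inequality $(u^* \circ \ga)_t + H_\ga(s,(u^* \circ \ga)') \leq 0$ almost everywhere (this equivalence being standard for continuous convex coercive Hamiltonians). The Fenchel duality $\mu\la \leq H_\ga(s,\mu) + L_\ga(s,\la)$, applied with $s = \eta(t)$, $\mu = (u^* \circ \ga)'(\eta(t),t)$, $\la = \dot\eta(t)$, combines with the pointwise subsolution inequality to give at almost every $t \in [t_1,t_2]$
\[
(u^* \circ \ga)_t(\eta(t),t) + (u^* \circ \ga)'(\eta(t),t)\,\dot\eta(t) \leq L_\ga(\eta(t),\dot\eta(t)).
\]
The left-hand side is the chain-rule derivative of $\phi(t) := u^* \circ \ga(\eta(t),t)$, so $\dot\phi \leq L_\ga(\eta,\dot\eta)$ a.e.\ on $[t_1,t_2]$, and integrating between the endpoints produces the stated bound on the endpoint difference of $u^* \circ \ga$.

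The one mildly delicate step is justifying the chain rule for the Lipschitz composition $\phi$: the non-differentiability set of $u^* \circ \ga$ has planar Lebesgue measure zero, but its intersection with the image of $(\eta(\cdot),\cdot)$ need not be negligible without a transversality argument. To bypass this, I would mollify $u^* \circ \ga$ in both variables to obtain smooth approximants $u^{(k)}$; by convexity of $H_\ga$ in $\mu$ (Jensen's inequality) together with the Lipschitz dependence in $s$ from \textbf{(P4)}, these satisfy $u^{(k)}_t + H_\ga(s, (u^{(k)})') \leq \omega_k$ with $\omega_k \to 0$ uniformly on compact subsets of $(0,|\ga|)\times(0,T)$. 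The classical chain rule then applies to $u^{(k)}(\eta(t),t)$, the Fenchel inequality yields $\frac{d}{dt} u^{(k)}(\eta,t) \leq L_\ga(\eta,\dot\eta) + \omega_k$, integration gives the bound up to $\omega_k(t_2-t_1)$, and uniform convergence of $u^{(k)}$ to $u^* \circ \ga$ lets me pass to the limit. I expect no genuine obstacle beyond this routine regularization, which is standard for convex Hamilton--Jacobi equations.
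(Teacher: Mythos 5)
Your argument is correct and coincides with what the paper actually does: the paper's proof of Lemma~\ref{fiori} is a one-line appeal to the ``elementary fact in viscosity solutions theory'' that Lipschitz subsolutions of convex, coercive Hamilton--Jacobi equations satisfy this sub-optimality inequality along curves, and your Fenchel--Young plus mollification argument is precisely the standard proof of that fact. The only residual point your write-up glosses over is that $\eta$ may touch the spatial endpoints $\{0,|\ga|\}$ (indeed it does in the application, where the curve starts and ends at a vertex), where the equation is only posed in the open strip; this is absorbed by the same routine approximation, e.g.\ pushing the curve slightly into the interior and using the Lipschitz continuity of $u^*\circ\ga$ and of $L_\ga$ on $[0,|\ga|]\times[-\be_0,\be_0]$.
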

\begin{proof} The Lipschitz continuous function $u^* \circ \ga$ is subsolution to
\[u_t +  H_\ga(s,u')=0 \qquad\hbox{in $ (0,|\ga|) \times (0,T)$,}\]
with $H_\ga$ convex in the momentum variable.
 The statement is then an elementary fact in viscosity solutions theory. \end{proof}

We recall that the infinitesimal pairs $\Delta_m=(\Delta_m t, \Delta_m x)$ \ satisfy \eqref{opiccolo}.

\medskip

\begin{Lemma}\label{freddo} Let $x$ be a vertex, assume that
\begin{equation}\label{freddo1}
 \frac d{dt} \psi (t_0) < c_x
\end{equation}
for some $C^1$ subtangent $\psi$ to $u(x,\cdot)$ at $t_0 \in (0,T]$, then we find $ \de \in (0,t_0)$ and  sequences $t_m \in \T_m$  with $t_m \to t_0$ such that
\begin{itemize}
  \item[{\bf (i)}] $ v_m(x,t_m) -  v_m(x,t) \leq a \, (t_m-t)$ for any $ t \in \mathcal T_m \cap [t_0- \de, t_0)$, $m$  large enough,  and a suitable constant $a < c_x$;
  \item[{\bf (ii)}] there is  an arc $\ga\in \EE^+_x$ such  that
\begin{equation}\label{freddo2}
 v_m(x,t_m)= S_{\Delta,\gamma}[ v_m\circ \ga](\ga^{-1}(x),t_m - \Delta_m t)
\end{equation}
up to  subsequences.
\end{itemize}
\end{Lemma}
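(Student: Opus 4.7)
The plan is to fix a constant $a$ with $\frac{d}{dt}\psi(t_0) < a < c_x$ and to take $t_m$ to be a minimizer of $t \mapsto v_m(x,t)-at$ on $\T_m \cap [t_0-\delta, t_0]$ for a suitable $\delta>0$. With this choice item (i) is built in, and item (ii) will follow from a one-step contradiction ruling out the flux-limiter branch of \eqref{due}.

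First I would reduce to the case when $\psi$ is a \emph{strict} subtangent by the standard perturbation $\psi(t) \leadsto \psi(t) - \eps (t-t_0)^2$, which does not alter $\frac{d}{dt}\psi(t_0)$. Using continuity of $\psi'$ I then choose $\delta \in (0,t_0)$ so small that $\psi'(s) < a$ for all $s \in [t_0-\delta, t_0+\delta]\cap[0,T]$ and $u(x,s) - \psi(s) > u(x,t_0) - \psi(t_0)$ on this interval for $s \neq t_0$. For $t \in [t_0-\delta, t_0)$, the identity $\psi(t_0) - \psi(t) = \int_t^{t_0}\psi'(s)\,ds < a(t_0-t)$ combined with strictness yields
\[
    u(x,t) - at \;>\; u(x,t_0) + \psi(t) - \psi(t_0) - at \;>\; u(x,t_0) - at_0,
\]
so $t_0$ is the unique minimizer of $t \mapsto u(x,t) - at$ on $[t_0-\delta, t_0]$. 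Uniform convergence $v_m \to u$ together with Proposition \ref{max} (applied to this strict minimization on the shrinking discrete sets $\T_m \cap [t_0-\delta, t_0]$) then delivers $t_m \to t_0$ and $v_m(x,t_m) \to u(x,t_0)$. Item (i) is immediate: the minimizing property of $t_m$ reads $v_m(x,t_m) - at_m \leq v_m(x,t) - at$ for every $t \in \T_m \cap [t_0-\delta, t_0]$, which is exactly the desired estimate on the slightly smaller set $\T_m \cap [t_0-\delta, t_0)$.

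For (ii) I would argue by contradiction. If at $t_m$ the arc branch of the min in \eqref{due} were not attained, then $v_m(x,t_m) = v_m(x, t_m - \Delta_m t) + c_x \Delta_m t$, giving
\[
    [v_m(x,t_m) - at_m] - [v_m(x,t_m-\Delta_m t) - a(t_m-\Delta_m t)] = (c_x - a)\Delta_m t \;>\; 0.
\]
Since $t_m \to t_0 > t_0-\delta$ and $\Delta_m t \to 0$, for $m$ large the point $t_m - \Delta_m t$ still lies in $\T_m \cap [t_0-\delta, t_0]$, contradicting the minimizing property of $t_m$. Therefore $v_m(x,t_m) = \widetilde S[v_m](x,t_m - \Delta_m t) = \min_{\gamma \in \EE^+_x} S_{\Delta_m,\gamma}[v_m\circ\gamma](\gamma^{-1}(x), t_m-\Delta_m t)$, and finiteness of $\EE^+_x$ lets us pass to a subsequence along which the optimal arc is a fixed $\gamma$, producing \eqref{freddo2}. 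The only subtle point in the whole argument is the choice of $\delta$: one really needs both the strictness of the subtangent and the bound $\psi'(s) < a$ to get strict minimization on the \emph{one-sided} closed interval $[t_0-\delta, t_0]$, since $\psi - a\cdot$ has negative derivative at $t_0$ and would otherwise decrease past it.
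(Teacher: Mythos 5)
Your proof is correct and follows essentially the same route as the paper: fix $a$ strictly between $\frac{d}{dt}\psi(t_0)$ and $c_x$, localize near $t_0$, take discrete minimizers $t_m$, invoke the stability of minimizers (Proposition \ref{max}) to get $t_m \to t_0$, and rule out the flux--limiter branch of \eqref{due} because it would force an increment of exactly $c_x\,\Delta_m t > a\,\Delta_m t$. The only (harmless, and arguably cleaner) variation is that you minimize $v_m(x,\cdot)-a\,\cdot$ over the one--sided interval $[t_0-\de,t_0]$ where the paper minimizes $v_m(x,\cdot)-\psi$ over a two--sided neighborhood and then converts via $\psi(t_m)-\psi(t)\leq a\,(t_m-t)$; your choice makes item {\bf (i)} an immediate consequence of minimality for every $t$ in the window, without any caveat about the relative order of $t$ and $t_m$.
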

\begin{proof}
 By \eqref{freddo1}  there exist $\de \in (0,t_0)$
  and $a < c_x$ with
\[\psi(t_2) - \psi(t_1) \leq a \, (t_2-t_1) \qquad\hbox{for any $t_1$,   $t_2$ in $[t_0 - \delta, t_0+\delta] \cap [0,T]$.}\]
 We can  assume, without loosing generality, that $t_0$ is the unique minimizer of
 \[ v(x,\cdot) - \psi \qquad\hbox{in  $U := [t_0 - \delta, t_0 + \delta] \cap [0,T]$.}\]
Therefore any sequence $t_m$  of minimizers of $  v_m (x,\cdot) - \psi$ in $U \cap \mathcal T_m$ converges to $t_0$.
If $t  \in [t_0 - \de,t_0) \cap \mathcal T_m$ then
\[ v_m(x,t) - \psi(t) \geq   v_m(x,t_m) - \psi(t_m)\]
and consequently
\begin{equation}\label{diffi01}
   v_m(x,t_m) -  v_m (x,t) \leq \psi(t_m)- \psi(t) \leq a \, (t_m-t),
\end{equation}
for $m$ suitably large, which shows item {\bf (i)}.
Taking into account \eqref{diffi01},
 the  very definition of discrete problems, and the fact that the arcs in $\EE^+_x$ are finitely many, we derive that there exists  an arc $\ga \in \EE^+_x$  satisfying \eqref{freddo2} up to a subsequence.
\end{proof}

\smallskip
We can assume, without loosing generality, that the constant $\de$ appearing in the statement of Lemma \ref{freddo} satisfies
\begin{equation}\label{freddy0}
\de < \frac {|\ga|}{2 \be_0} \wedge \frac{|\ga|}2,
\end{equation}
To ease notations, we further assume  that $\ga^{-1}(x)=0$ and that the property {\bf (ii)} of the previous result holds for any $m$ and not just for a subsequence.
\smallskip

\begin{Lemma}\label{freddy} Let $\de$,  $\ga$ be the constant appearing in the previous lemma satisfying \eqref{freddy0}, and the arc for which \eqref{freddo2} holds true, respectively. Given $m \in \N$, let $[t^*_m, t_m]$ be the maximal interval of definition of an optimal trajectory $\xi_m$ for $ v_m \circ \ga(0,t_m)$, then $t^*_m \leq t_m -\de$ for $m$ suitably large.
\end{Lemma}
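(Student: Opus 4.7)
The plan is to argue by contradiction. Suppose along a subsequence (still indexed by $m$) that $t_m^* > t_m - \de$. By the construction of an optimal trajectory in Subsection \ref{ops}, the stopping time $t_m^*$ must fall into one of three configurations: either $t_m^* = 0$, or $\xi_m(t_m^*) \in \{0, |\ga|\}$ with condition \eqref{nuovo1} failing at $(\ga(\xi_m(t_m^*)), t_m^*)$.

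The two degenerate endpoint locations are straightforward to discard. The alternative $t_m^* = 0$ would force $t_m < \de$, impossible once $m$ is large because $t_m \to t_0 > \de$. The alternative $\xi_m(t_m^*) = |\ga|$ is ruled out by iterating the one-step displacement bound \eqref{diffi-11}: starting from $\xi_m(t_m) = 0$ and performing $(t_m - t_m^*)/\Delta_m t$ backward steps, each of displacement at most $\be_0 \Delta_m t + \Delta_m x$, gives
\[
|\xi_m(t_m^*)| \leq (t_m - t_m^*)(\be_0 + \Delta_m x/\Delta_m t) \leq \de(\be_0 + \Delta_m x/\Delta_m t),
\]
and by \eqref{opiccolo} together with \eqref{freddy0} the right-hand side stays strictly below $|\ga|/2$ for $m$ large. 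Hence $\xi_m(t_m^*) = 0 = \ga^{-1}(x)$ is the only remaining possibility.

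With both endpoints of the interpolated curve $\wtd\xi_m$ equal to $\ga^{-1}(x)$, Lemma \ref{precrux} supplies the action lower bound $\int_{t_m^*}^{t_m} L_\ga(\wtd\xi_m, \dot{\wtd\xi_m})\, dt \geq c_x(t_m - t_m^*)$. For the matching upper bound, combine a sharpened form of Proposition \ref{coroptimum} — re-examining its proof, the error term naturally scales as $(t_m - t_m^* + \Delta_m t)\ell_0(\be_0 \Delta_m t + \Delta_m x + \Delta_m x/\Delta_m t)$ rather than via the crude factor $T$ — with Lemma \ref{freddo}(i), which yields $v_m(x, t_m) - v_m(x, t_m^*) \leq a(t_m - t_m^*)$ for some $a < c_x$. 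Dividing the resulting inequality by $t_m - t_m^* \geq \Delta_m t$ and using $\Delta_m t/(t_m - t_m^*) \leq 1$ produces
\[
0 < c_x - a \leq 2\ell_0(\be_0 \Delta_m t + \Delta_m x + \Delta_m x/\Delta_m t),
\]
whose right-hand side vanishes by \eqref{opiccolo}, the sought contradiction.

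The main obstacle is precisely this linear-in-$(t_m - t_m^*)$ scaling of the error in Proposition \ref{coroptimum}: the $T$-based constant stated there would leave behind an uncontrolled $\Delta_m x/(\Delta_m t)^2$ term in the worst case $t_m - t_m^* = \Delta_m t$, a quantity that \eqref{opiccolo} alone does not bound. A minor secondary technicality is that $\de$ should be chosen initially small enough that the inequality $\psi' < a$ from Lemma \ref{freddo} holds on a slightly enlarged neighborhood of $t_0$, so that Lemma \ref{freddo}(i) can be legitimately invoked at every $t_m^* > t_m - \de$ appearing along the subsequence.
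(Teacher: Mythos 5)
Your argument follows the same route as the paper's own proof: rule out $t^*_m=0$ and $\xi_m(t^*_m)=|\ga|$ via the one--step displacement bound \eqref{diffi-11} together with \eqref{freddy0}, conclude that $\xi_m(t^*_m)=\ga^{-1}(x)$, and then play the action upper bound from Proposition \ref{coroptimum} combined with Lemma \ref{freddo}(i) against the lower bound of Lemma \ref{precrux}. The one place where you depart from the paper is also where you improve on it. The paper passes directly from $(c_x-a)(t_m-t^*_m)\leq T\,\ell_0\big(\be_0\,\Delta_m t+\Delta_m x+\Delta_m x/\Delta_m t\big)$ to a contradiction ``for $m$ large enough''; this is immediate only if $t_m-t^*_m$ stays bounded away from zero, which the contradiction hypothesis $t^*_m>t_m-\de$ does not guarantee (the extreme case $t_m-t^*_m=\Delta_m t$ is not excluded, and there the right--hand side divided by $t_m-t^*_m$ contains the non--vanishing term $T\ell_0\be_0$ and the uncontrolled term $T\ell_0\,\Delta_m x/(\Delta_m t)^2$). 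You correctly observe that the error in Proposition \ref{coroptimum} in fact scales like $(t_m-t^*_m+\Delta_m t)\,\ell_0(\cdots)$ rather than $T\,\ell_0(\cdots)$ --- this is visible in \eqref{diffi1} and \eqref{diffi4}, where the number of summands is $\lceil (t_0-t^*)/\Delta t\rceil$, not $T/\Delta t$ --- and with this sharpened constant one may divide by $t_m-t^*_m\geq\Delta_m t$ and conclude from \eqref{opiccolo} alone. Your version is therefore complete where the paper's is elliptic. The secondary point you raise, enlarging slightly the interval on which $\psi$ decreases at rate $a$ so that Lemma \ref{freddo}(i) can be invoked at $t^*_m$ (which may fall just outside $[t_0-\de,t_0)$ since $t_m$ only converges to $t_0$), is likewise a legitimate and easily absorbed adjustment.
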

\begin{proof}  We first point out that optimal trajectories $\xi_m$ for $ v \circ \ga(0,t_m)$ do  exist because of condition \eqref{freddo2}. We fix $m$ and assume by contradiction that $t^*_m > t_m-\de >0$, by exploiting the fact that $\xi_m(t_m)=0$, \eqref{diffi-11} and \eqref{freddy0}, we get
\begin{eqnarray*}
  |\xi_m(t^*_m)| &\leq& \be_0 \, (t_m -t^*_m) + \frac{t_m -t^*_m}{\Delta_m t} \, \Delta_m x \leq \left (\be_0  + \frac{\Delta_m x}{\Delta_m t} \right ) \, \de \\
   &<& \left (\frac 12 + \frac 12 \right ) \, |\ga|.
\end{eqnarray*}
Therefore $\xi_m(t^*_m) \neq |\ga|$, taking into account that $\xi_m(t^*_m) \in \{0,|\ga|\}$ or $t_m^*=0$, we deduce that $\xi_m(t^*_m)=0$. We denote by $\wtd \xi_m$ the trajectory supported in $[0,|\ga|]$ obtained by linear interpolation from $\xi_m$, see \eqref{sanctuary}.  We derive from Proposition  \ref{coroptimum} and the condition {\bf (i)} of Lemma \ref{freddo}
\[ \int_{t^*_m}^{t_m} L_\ga(\wtd \xi_m, \dot{\wtd \xi_m}) \, dt
<  a \, (t_m - t^*_m) + T \, \ell_0(\be_0 \, \Delta_m t + \Delta_m x + \Delta_m x/\Delta_m t)\]
which implies
\[ \int_{t^*_m}^{t_m} L_\ga(\wtd \xi_m, \dot{\wtd \xi_m}) \, dt < c_x \, (t_m-t_m^*),\]
provided that $m$ is large enough. This is in contradiction with  Lemma \ref{precrux}, and proves that $t^*_m \leq t_m -\de$ for $m$ large enough, as was asserted.
\end{proof}

\begin{proof}[{\bf Proof of Theorem \ref{teofreddo}}] We are in the setup described by the statement of Lemma ref{freddo}. We keep the notation $\xi_m$, $\wtd \xi_m$ for the sequences of curves introduced in the previous proof. To repeat: $\xi_m$ is an optimal trajectory for  $ v_m \circ \ga(0,t_m)$, $\wtd \xi_m$ is the curve obtained from $\xi_m$ by linear interpolation. Given $\eps >0$, we focus on the $m$'s with $t_m < t_0+\eps$, $t_m < t_0+\de/2$, where $\de$ is the constant appearing in Lemma \ref{freddo}.

Thanks to Lemma \ref{freddy}  the $\wtd \xi_m$'s are  defined in the interval $[t_0 - \de/2,t_m]$, we extend them, without changing notation,  to $[t_0-\de/2,t_0+ \eps]$ by setting
\[\wtd \xi_m (t)= \wtd \xi_m(t_m)=0 \qquad\hbox{for every $m$, $t \in (t_m,t_0 +\eps]$.}\]
The curves $\wtd \xi_m$ so extended are  equibounded and equiLipschitz continuous by \eqref{diffi-1}, \eqref{diffi-0}, and consequently uniformly convergent in $[t_0 - \de/2,t_0+ \eps]$, up to a subsequence, to a limit curve   $\wtd \xi$. By the lower semicontinuity of the action functional with respect to the uniform convergence,  see for instance \cite{Buttazzo}, we get
\begin{equation}\label{teofreddo1}
\liminf_m \int_{t_0-\de/2}^{t_0+\eps} L_\ga(\wtd \xi_m, \dot{\wtd \xi_m}) \, dt \geq
\int_{t_0 - \de/2}^{t_0+\eps} L_\ga(\wtd \xi, \dot{\wtd \xi}) \, dt.
\end{equation}
Let $\de_m$ be a sequence with $\de_m <  \frac \de 2$, $t_0- \de_m \in \mathcal T_m$, $\de_m \to \frac \de 2$, we have by Proposition \ref{coroptimum}
\begin{eqnarray*}
 && \int_{t_0-\de/2}^{t_0+\eps} L_\ga(\wtd \xi_m,\dot{\wtd \xi_m}) \, dt =
\int_{t_0-\de/2}^{t_0-\de_m} L_\ga(\wtd \xi_m,\dot{\wtd \xi_m}) \, dt + \int_{t_0-\de_m}^{t_m} L_\ga(\wtd \xi_m,\dot{\wtd \xi_m}) \, dt\\ &+&  \int_{t_m}^{t_0+\eps} L_\ga(\wtd \xi_m,\dot{\wtd \xi_m}) \, dt
   \leq  v_m\circ \ga(0,t_m) - v_m \circ \ga(\wtd\xi_m(t_0-\de_m),t_0-\de_m)\\ &+& (t_m-t_0+\de_m) \, \ell_0(\be_0 \, \Delta_m t + \Delta_m x + \Delta_m x/\Delta_m t) + M \, (\de/2- \de_m +t_0+\eps -t_m ),
\end{eqnarray*}
where $\ell_0$ has been defined in Notation \ref{notmod} and  $M$ is an upper bound of $L_\ga$ in $[0,|\ga|] \times [-\be_0, \be_0]$. Passing at the limit as $m \to + \infty$ and taking into account \eqref{teofreddo1}, we get
\[\int_{t_0 - \de/2}^{t_0+\eps} L_\ga(\wtd \xi, \dot{\wtd \xi}) \, dt \leq  u\circ \ga(0,t_0) - u \circ \ga(\wtd\xi(t_0-\de),t_0-\de) + M \, \eps, \]
 sending $\eps$ to $0$, we further get
 \[\int_{t_0 - \de/2}^{t_0} L_\ga(\wtd \xi, \dot{\wtd \xi}) \, dt \leq  u\circ \ga(0,t_0) - u \circ \ga(\wtd\xi(t_0-\de),t_0-\de). \]
 Since $u\circ \ga $ is a subsolution to \eqref{HJgm} by Proposition \ref{main}, \ref{bimain}, equality must prevail in the above formula by Lemma \ref{fiori}. Now, assume for purposes of contradiction that there is a $C^1$ subtangent $\phi$, constrained  to $[0,|\ga|] \times [0,T]$, to $ u \circ \ga$ at $(0,t_0)$ with
  \[ \phi_t(0,t_0) + H_\ga(0,\phi'(0,t_0))
  < 0,\]
 then we can construct via Perron--Ishii method a new subsolution of \eqref{HJg}, say $w: [0,|\ga|] \times [0,T] \to \R$,   with
$w$ strictly greater than $u \circ \ga$ locally at $(0,t_0)$ and agreeing with $u \circ \ga$ at $\wtd \xi(t_0-\de,t_0-\de)$,  so that
\begin{eqnarray*}
  w(0,t_0) - w(\wtd\xi(t_0-\de),t_0-\de) & >& u\circ \ga(0,t_0) - u \circ \ga(\wtd\xi(t_0-\de),t_0-\de) \\
   &=& \int_{t_0 - \de}^{t_0} L_\ga(\wtd \xi, \dot{\wtd \xi}) \, dt,
\end{eqnarray*}
which is impossible by Lemma \ref{fiori}.
\end{proof}

\bigskip

\section{main convergence theorem}

\begin{Theorem}\label{finale}  Let $\Delta_m$ a sequence of pairs satisfying \eqref{opiccolo}. The whole sequence $v_m$ uniformly converges to the unique solution of the original problem (HJ).
\end{Theorem}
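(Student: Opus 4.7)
The plan is to assemble the theorem from the pieces already proved in Sections 6, 7, 8 together with the modification result of Section 2. There are no new hard ideas to introduce here; the work is essentially bookkeeping.

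First I would invoke Theorem \ref{quasimain} to extract, from any subsequence of $\{v_m\}$, a further subsequence (still denoted $v_m$) that converges uniformly on $\G^T_m\to \G\times[0,T]$ to some Lipschitz continuous function $u:\G\times[0,T]\to\R$. Since $v_m(x,0)=g(x)$ for every $m$ and every $x\in\G$, the limit $u$ satisfies $u(\cdot,0)=g$ on $\G$. By Proposition \ref{main}, for every arc $\ga\in\EE^+$ the composition $u\circ \ga$ is a viscosity solution of \eqref{HJgm} on $(0,|\ga|)\times(0,T)$. By Proposition \ref{bimain}, $u$ satisfies the subsolution condition \textbf{(ii)} of Definition \ref{defsol} at every vertex. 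By Theorem \ref{teofreddo}, which uses the additional assumption \eqref{opiccolo}, $u$ also satisfies the supersolution condition \textbf{(i)} at every vertex with $H_\ga$ in place of $\wtd H_\ga$. Together these yield that $u$ is a solution of (HJmod).

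Next I would apply Theorem \ref{exunmod} to conclude that $u$ coincides with the unique solution $u^0$ of (HJmod); and then Theorem \ref{modteo}, which rests on the choice \eqref{mod2} of the interval $I$ underlying the modification, to identify $u^0$ with the unique solution of the original problem (HJ). Thus any cluster point of $\{v_m\}$ (with respect to the notion of uniform convergence on $\G^T_m\to \G\times[0,T]$) is forced to be the solution of (HJ).

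Finally, to upgrade convergence from subsequences to the whole sequence I would use the standard Urysohn-type argument: by the equiLipschitz and equibounded estimate of Proposition \ref{retire} and Notation \ref{nota1}, every subsequence of $\{v_m\}$ admits, by Ascoli together with Proposition \ref{nogymn}, a further uniformly convergent subsequence; by the argument above its limit must equal the unique solution of (HJ). Hence the whole sequence $\{v_m\}$ converges uniformly to that solution, which proves Theorem \ref{finale}. The only mildly delicate point in writing this out is the convergence notion, which is understood in the sense of Definition \ref{abspaces} since the domains $\G^T_m$ vary with $m$; but this is already handled by Theorem \ref{quasimain} and the estimates of Section 6, so no new obstacle arises.
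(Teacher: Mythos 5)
Your proposal is correct and follows essentially the same route as the paper: combine Propositions \ref{main}, \ref{bimain} and Theorem \ref{teofreddo} to show any limit function solves (HJmod), invoke the uniqueness of Theorem \ref{exunmod} to upgrade subsequential convergence to convergence of the whole sequence, and conclude via Theorem \ref{modteo} that the limit solves (HJ). You merely spell out the standard subsequence-of-a-subsequence argument more explicitly than the paper does.
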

\begin{proof} Let $u$ be a limit function of $v_m$. By Propositions \ref{main},  \ref{bimain} and Theorem \ref{teofreddo}, $u$ is solution to (HJmod) and, since such a solution is unique by Theorem \ref{exunmod}, we derive that the whole sequence $v_m$ uniformly converges to $u$. Finally, by Theorem \ref{modteo}, $u$ is solution to (HJ).

\end{proof}

\bigskip

\section{Numerical simulations}\label{simu}
In this section, we describe some numerical simulations in the case of networks in $\R^2$, to show the performance and the convergence properties of the  proposed scheme \eqref{eq:scheme}.
We measure the accuracy of the scheme by computing the following  uniform and 1-norm of the errors:
\begin{equation}\label{err}
E^\infty := \max_{ (x,T) \in \Gamma^T_{\Delta}} |v(x,T) -  v_{\Delta}(x,T)|, \quad E^1 := \sum_{( x,T) \in \Gamma^T_{\Delta}} |v(x,T) -  v_{\Delta}(x,T)| \Delta x.\end{equation}

We perform two numerical tests. In the first example we consider a simple network, and two different  Hamiltonians. For the first one, continuous on the whole network and not depending on $s$,  the analytical solution $v$ of the time--dependent problem is known, so that the errors   between the approximated solution $v_{\Delta}$  and $v$, can be exactly computed.  For the second   Hamiltonian, discontinuous and  depending  on $s$, for which the analytical solution   is not available, we compare $v_{\Delta}$ with a reference solution computed on a much finer grid, which we regard as the exact solution.

In the second example, we consider a more complex network  and  two Hamiltonians: one continuous and the other discontinuous.
The numerical simulations have been carried out in MatLab R2020 on a machine with Processor $2,5$ GHz Intel Core $i7$.

As discussed in \cite{FF15}, the truncation error for first order semi--Lagrangian scheme applied to Hamilton--Jacobi equation in $\R$   with continuous Hamiltonian is  $\mathrm O(\frac{\Delta x^2}{\Delta t}+\Delta t)$. In our case, we have an additional error due to the fact that   the local equations is not posed in the whole of $\R$ but in a bounded interval, so that the controls must be bounded as well. However the error so introduced  is of order $O(\Delta t)$, so that, in the end, the above estimate $\mathrm O(\frac{\Delta x^2}{\Delta t}+\Delta t)$ is still valid in our context.
It is worth pointing out that the additional error does not appear in \cite{carlini20}. This is, in some sense, the price we are paying for having reduced the computation, as much as possible, to the local equations defined on any single arc of the network. On the other hand the reduction allows  a large gain in computational time (see Table \ref{tab:test1a}-- \ref{tab:test2a}).

The above estimate of the truncation error shows that  the choice $\Delta t=O(\Delta x)$  is optimal, and heuristically indicates that   convergence rate is 1.
However, our theoretical convergence analysis requires $\frac{\Delta x}{\Delta t}\to 0$, we have consequently  chosen a balance between the two parameters compatible with  the theoretical results
and at the same time not too far from the optimal
case,  we have therefore set  $\Delta t=C \Delta x^{4/5}$, with $C$ positive constant.

We finally notice that for simplicity in the implementation, we have used a uniform space grid, which is slightly different  from what done in the  theoretical part.

\subsection{Test on a simple network}\label{sec:test1}\
We consider as $\Gamma \subset \R^2$  the triangle  with vertices
\[v_1 =(0,0), \qquad v_2=(1,0),  \qquad  v_3=(1/2,1/2),\]
with arcs
\[\ga_1(s)=  s\, v_2,  \quad \ga_2(s)=s \, v_3, \quad  \ga_3(s)= (1-s) \, v_2+ s\, v_3,\]
plus the  reversed arcs. Clearly $\{\ga_1, \ga_2, \ga_3\}$ make up an orientation of $\Gamma$.\\
{\em{Hamiltonian  independent of $s.$}} We choose as cost functions   $L_{\gamma_i}(s,\la)=\frac{\la^2}{2}$,  for all $s\in [0,|\ga_i|]$,
admissible flux limiters $c_i=-5$
for $i=1,2,3$,
 and as initial condition $g=0$. The exact solution, for $t\geq \frac{1}{\sqrt{10}}$,
 has the following form
$$
v(x,t)=\begin{cases}(0.5-\left |x_2-0.5\right |)\sqrt{10}-5t   &{\mbox {if}\;}  x\in \gamma_1,\vspace{0.2cm} \\
\left(\frac{1}{2\sqrt{2}}-\left||x|-\frac{1}{2\sqrt{2}}\right|\right)\sqrt{10}-5t       &{\mbox {if}\;}  x\in \gamma_2, \vspace{0.2cm} \\
\left(\left||x-(\frac{1}{2},\frac{1}{2})|-\frac{1}{2\sqrt{2}}\right|+\frac{1}{2\sqrt{2}}\right)\sqrt{10}-5t       &{\mbox {if}\;}  x\in \gamma_3,
\end{cases}
$$
where $x=(x_1,x_2)$ is the variable in the physical space $\R^2$. We compute errors in the uniform and 1-norm, as defined in  \eqref{err}, at $T=1$, and computational times,  for several refinements of the grids  with $\Delta t=\Delta x^{4/5}/2$.
The  hyperbolic CFL condition $\max_{\gamma,s}|u'(\gamma(s))|\Delta t \leq \Delta x$ (see (1.9) in \cite {costeseque2015convergent}) is not satisfied, and  large Courant number $\nu=\underset{\gamma,s}\max|u'(\gamma(s))|\frac{\Delta t}{\Delta x}=\frac{\sqrt{10}}{2\Delta x^{1/5}}>1$ are allowed.
We show the results in Table \ref{tab:test1a},  we further
show  there errors and computational  times for  the scheme proposed  in \cite{carlini20},  denoted by SL. We observe that the new scheme has rate of convergence close to 0.5  in the uniform norm and close to 1 in the 1 norm  (columns 2 and 3). Compared to the SL scheme (columns 5 and  6)  the accuracy is lower, but the computational time is smaller (column 4 vs column 7).

We remark that  in this case the  SL scheme computes the exact solution, since the characteristics are affine,  the truncation error is only due to minimization.

We also consider  the balance $\Delta t=\Delta x/2$, which is out of our theoretical convergence hypotheses, and we find that also in this case  the scheme has a convergent behavior.
In  Table \ref{tab:test1b}, we observe order one of convergence in both norms (columns two and three). Convergence rate improves at the price of higher computational cost, with respect the case  $\Delta t=\Delta x^{4/5}/2$. Compared to the SL scheme (columns 5 and 6) we observe bigger errors and still smaller computational time (columns 4 and 7).

\begin{table}[h]
\begin{small}
\centering
\begin{tabular}{lllllllllll}
\hline
 ${\Delta x}$ &$E^{\infty}$ &$E^{1}$     &time&$E^{\infty}$   &$E^{1}$     &time\\
 \hline
 $1.00\cdot 10^{-1}$   &$1.62\cdot 10^{-1}$    &$3.99 \cdot 10^{-2}$  & $0.03s$   &$1.19\cdot 10^{-5}$      &$1.11\cdot 10^{-6}$ &$0.90s$ \\
$5.00\cdot 10^{-2}$    &$1.18\cdot 10^{-1}$    &$2.14 \cdot 10^{-2}$ &$0.14s$  &$9.03\cdot 10^{-7}$          &$2.49 \cdot 10^{-7}$  &$5.07s$\\
$2.50\cdot 10^{-2}$    &$6.52\cdot 10^{-2} $   &$7.86 \cdot 10^{-3}$       &$0.62s$   &$4.13\cdot 10^{-7} $      &$1.14\cdot 10^{-7} $  &$35.8s$  \\
$1.25\cdot 10^{-2}$     &$4.16\cdot 10^{-2} $   &$3.65 \cdot 10^{-3}$    &$3.63s$  &$7.62\cdot 10^{-8} $   &$1.46\cdot 10^{-8} $     &$\,358s$\\
$6.25\cdot 10^{-3}$     &$2.67 \cdot 10^{-2}$  &$1.58 \cdot 10^{-3}$   & $25.6s$ &$3.29\cdot 10^{-8} $ &$8.09\cdot 10^{-9} $         &$2895s$\\
\hline
\end{tabular}
\end{small}
\vspace{0.2cm}
\caption{
Errors and convergence rate  for example in Section \ref{sec:test1} with quadratic cost independent on $x$, computed at  time $T=1$ with $\Delta t=\Delta x^{4/5}/2$. Columns 2-4 refer to the  the new scheme. Columns 5-7 refer to  the SL scheme  \cite{carlini20} \label{tab:test1a}.}
\end{table}

\begin{table}[h]
\begin{small}
\centering
\begin{tabular}{lllllllllll}
\hline
 ${\Delta x}$ &$E^{\infty}$ &$E^{1}$     &time&$E^{\infty}$   &$E^{1}$     &time\\
 \hline
 $1.00\cdot 10^{-1}$   &$3.37\cdot 10^{-2}$              &$1.38 \cdot 10^{-2}$  & $0.05s$ &$1.19\cdot 10^{-5}$          &$1.01\cdot 10^{-6}$  &$0.40s$\\
$5.00\cdot 10^{-2}$    &$1.68\cdot 10^{-2}$              &$6.58 \cdot 10^{-3}$ &$0.24s$  &$9.03\cdot 10^{-6} $      &$2.49\cdot 10^{-7} $  &$3.57s$  \\$2.50\cdot 10^{-2}$    &$8.44\cdot 10^{-3} $      &$3.25 \cdot 10^{-3}$       &$1.21s$   &$4.13\cdot 10^{-7} $   &$1.14\cdot 10^{-7} $     &$25.2s$\\$1.25\cdot 10^{-2}$     &$4.22\cdot 10^{-3} $   &$1.61 \cdot 10^{-3}$    &$8.92s$  &$7.62\cdot 10^{-8} $ &$1.46\cdot 10^{-8} $ &$\,174s$\\
$6.25\cdot 10^{-3}$     &$2.11 \cdot 10^{-3}$  &$8.15 \cdot 10^{-4}$   & $71.1s$ &$3.29\cdot 10^{-8} $ &$8.09\cdot 10^{-9} $ &$1728s$\\

\hline
\end{tabular}
\end{small}
\vspace{0.2cm}
\caption{
Errors  and convergence rate  for example in Section \ref{sec:test1} with quadratic cost independent on $x$, computed at  time $T=1$ with $\Delta t=\Delta x/2$. Columns 2-4 refer to the  the new scheme. Columns 5-7 refer to  the SL scheme  \cite{carlini20} \label{tab:test1b}}
\end{table}

\begin{figure}[htbp]
\centering
	\includegraphics[width=0.5\textwidth]{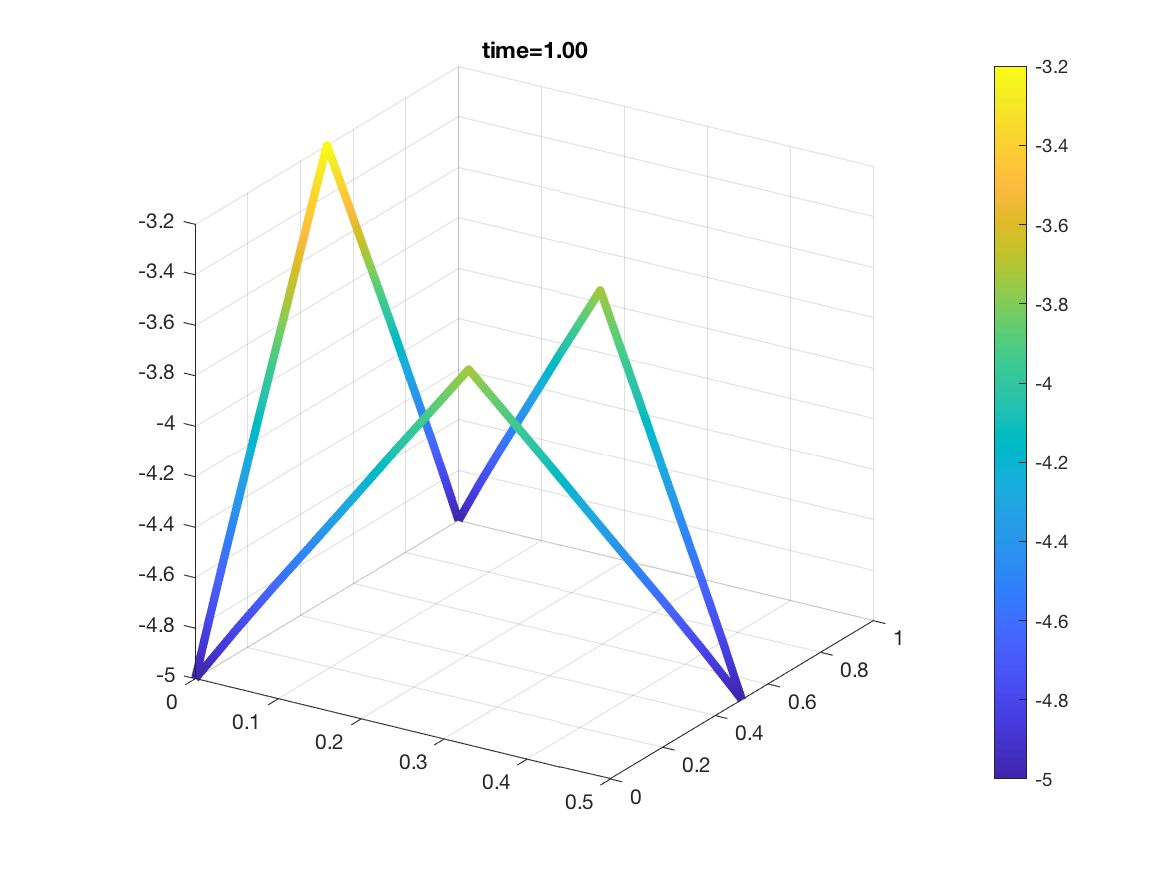}\includegraphics[width=0.5\textwidth]{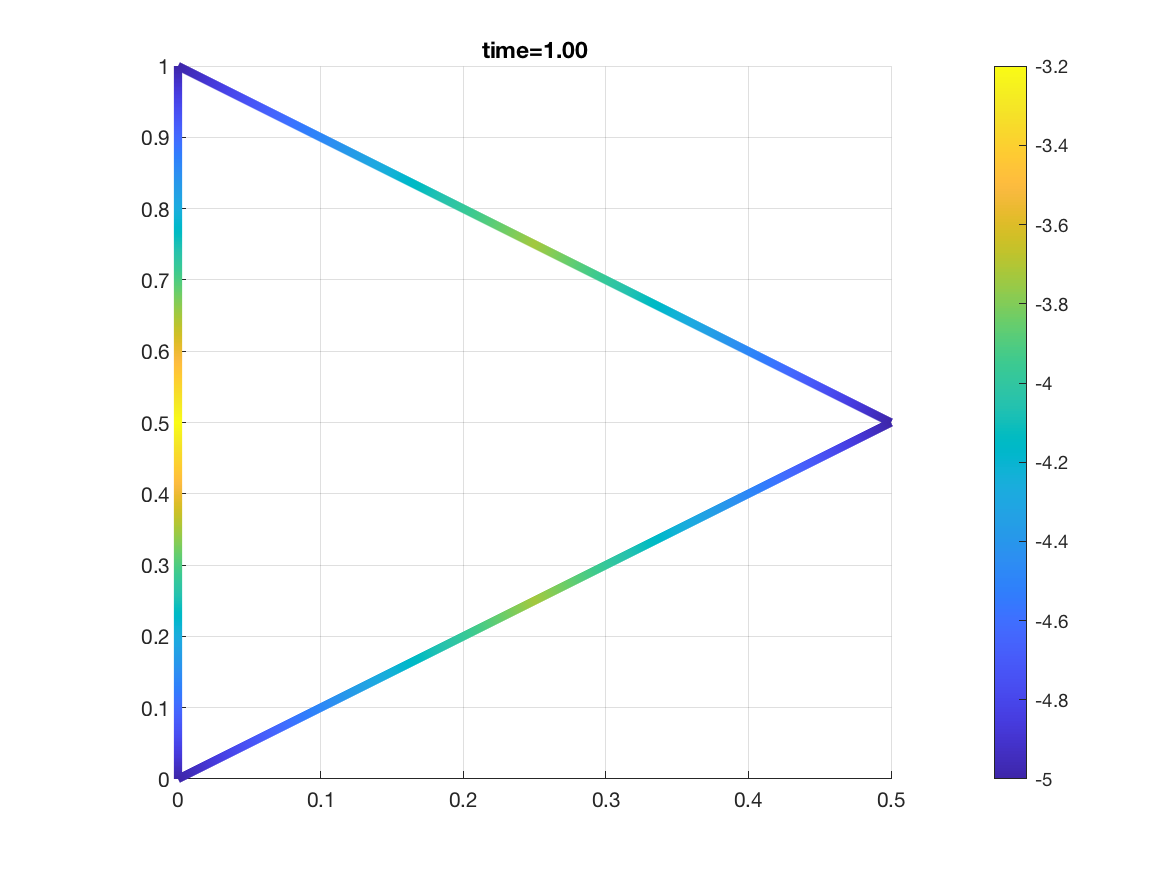}
\caption{Example in  Section~\ref{sec:test1} with  quadratic Hamiltonians and   $c_1=c_2=c_3=-5$. Solution $v_{\Delta}$ at  time $T=1$ computed with $\Delta x =0.05$, $\Delta t=\frac{\Delta x}{2}$
 \label{fig:test1}
  (Left plot: 3d view. Right plot: 2d view on $x_1x_2$ plane).  }
\end{figure}

\begin{figure}[htb!]
\centering
	\includegraphics[width=0.5\textwidth]{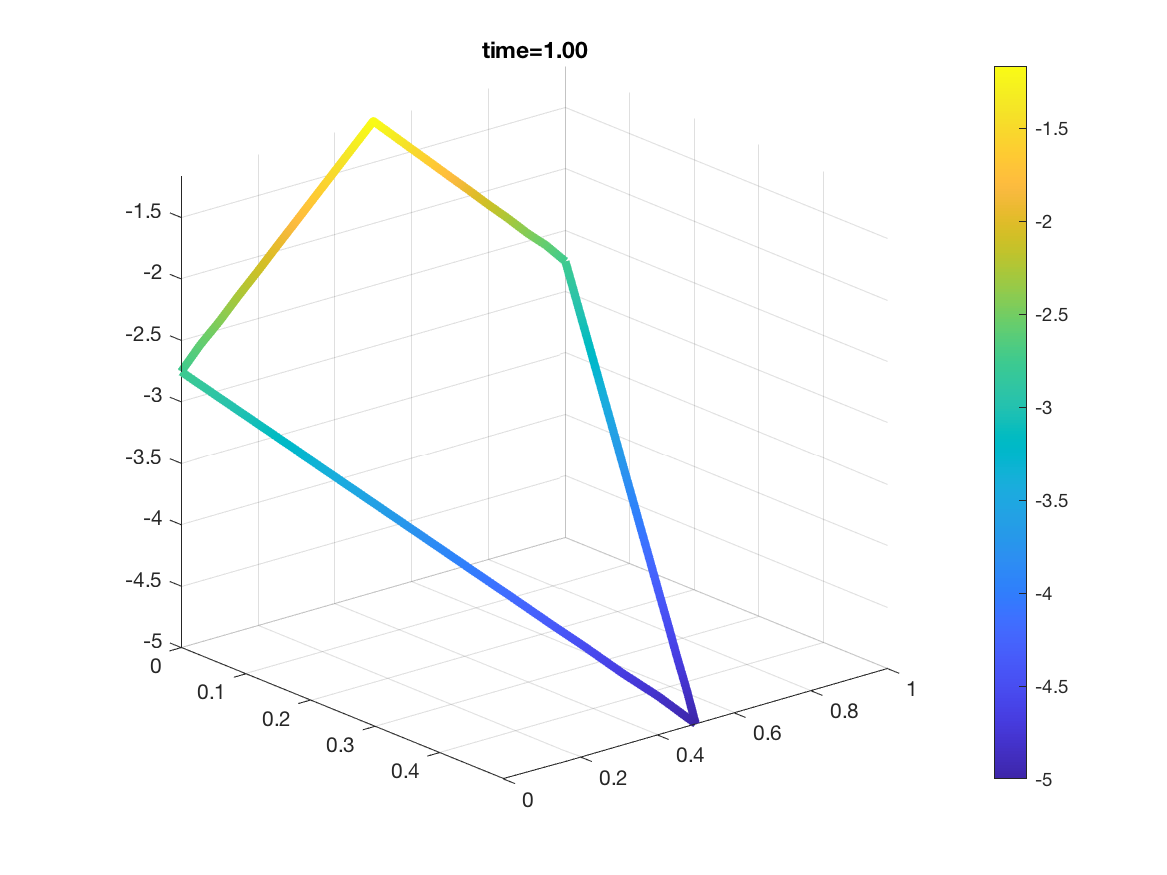}\includegraphics[width=0.5\textwidth]{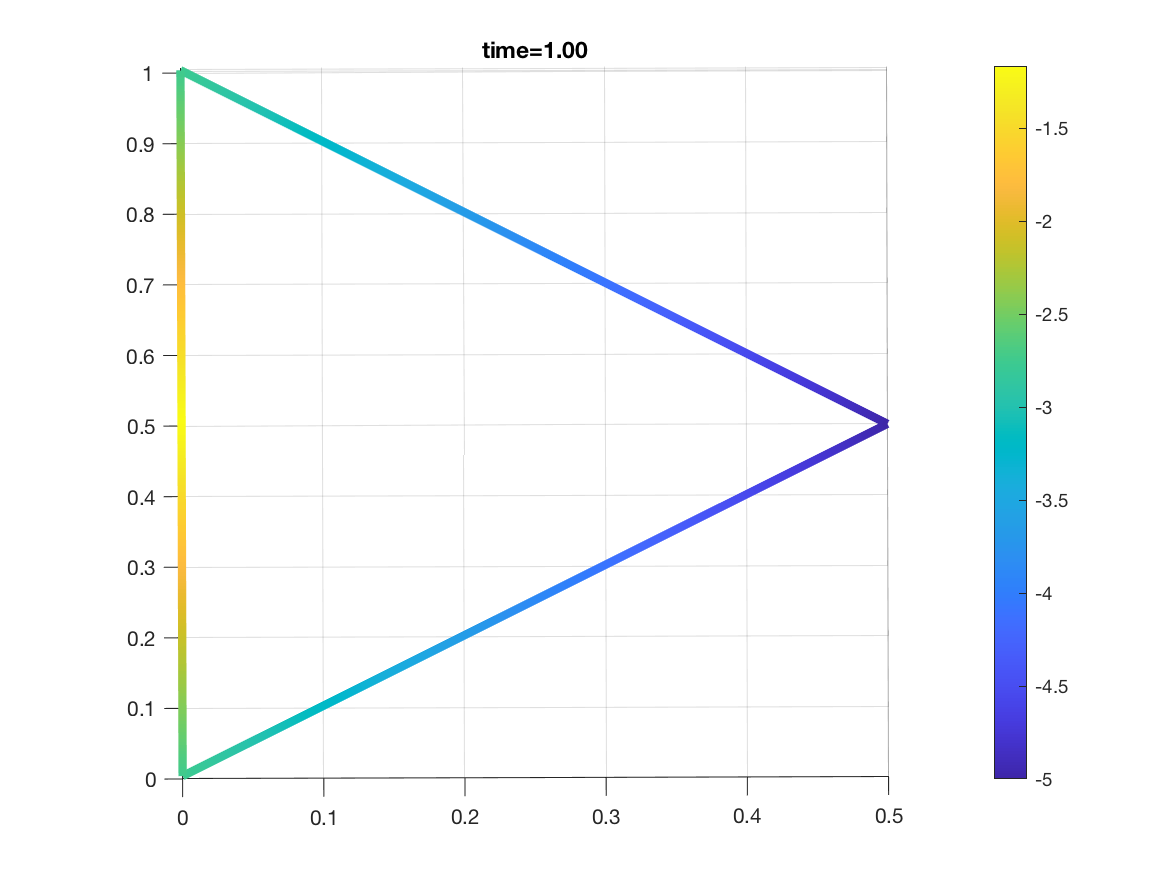}
\caption{Example in  Section~\ref{sec:test1} with  quadratic Hamiltonians and $c_1=c_2=-1,\;c_3=-5$. Solution $v_{\Delta}$ at  time $T=1$
computed with $\Delta x =0.05$,  $\Delta t=\frac{\Delta x}{2}$.
(Left plot: 3d view. Right plot: 2d view on $x_1x_2$ plane).  \label{fig:test1b}}
\end{figure}
In Figure \ref{fig:test1}, we show the approximated solution at time $T=1$ computed with $\Delta x=0.05$ and $\Delta t=0.025$.
In Figure \ref{fig:test1b}, we show the approximated solution at time $T=1$ obtained setting flux limiters   $c_1=c_2=-1$, and  $c_3$ still equal to  $-5$.  With this choice, it is more convenient to choose characteristics starting from  $v_3$, because of the small value of  the flux limiter at this vertex.\\

{\em{Hamiltonian  depending on $s.$}} Let us now choose cost functions depending on $s$, as
\begin{eqnarray*}
L_{\ga_1}(s,\la)  &=& \frac{|\la|^2}{2}+ 5|(\ga_1(s)-(0.5,0.5)|^2 \\
L_{\gamma_2}(s,\la) &=& \frac{|\la|^2}{2}+ 5|\ga_2(s) -(0.5,0.5)|^2+ 10 (\ga_1(s)_1)^2  \\
L_{\gamma_3}(s,\la) &=&\frac{|\la|^2}{2}+ 5|\ga_3(s)-(0.5,0.5)|^2+10 (\ga_3(s)_1)^2,
\end{eqnarray*}
where $\ga(s)_1$ indicates the first component of the point $\ga(s)$. The cost in  $\gamma_1$  is optimized at  $v_3$, whereas
the costs in  $\gamma_2$ and  $\gamma_3$  increase with the distance of the physical point from $v_3$ and  from the line $x_1=0$.
In Figure \ref{fig:test1c}, we show the initial condition (left) toghether with the approximated solution computed at $T=1$ (center and right).
We choose as admissible flux limiters $c_1=c_2=c_3=1$.
\begin{figure}[htbp]
\centering
	\includegraphics[width=0.33\textwidth]{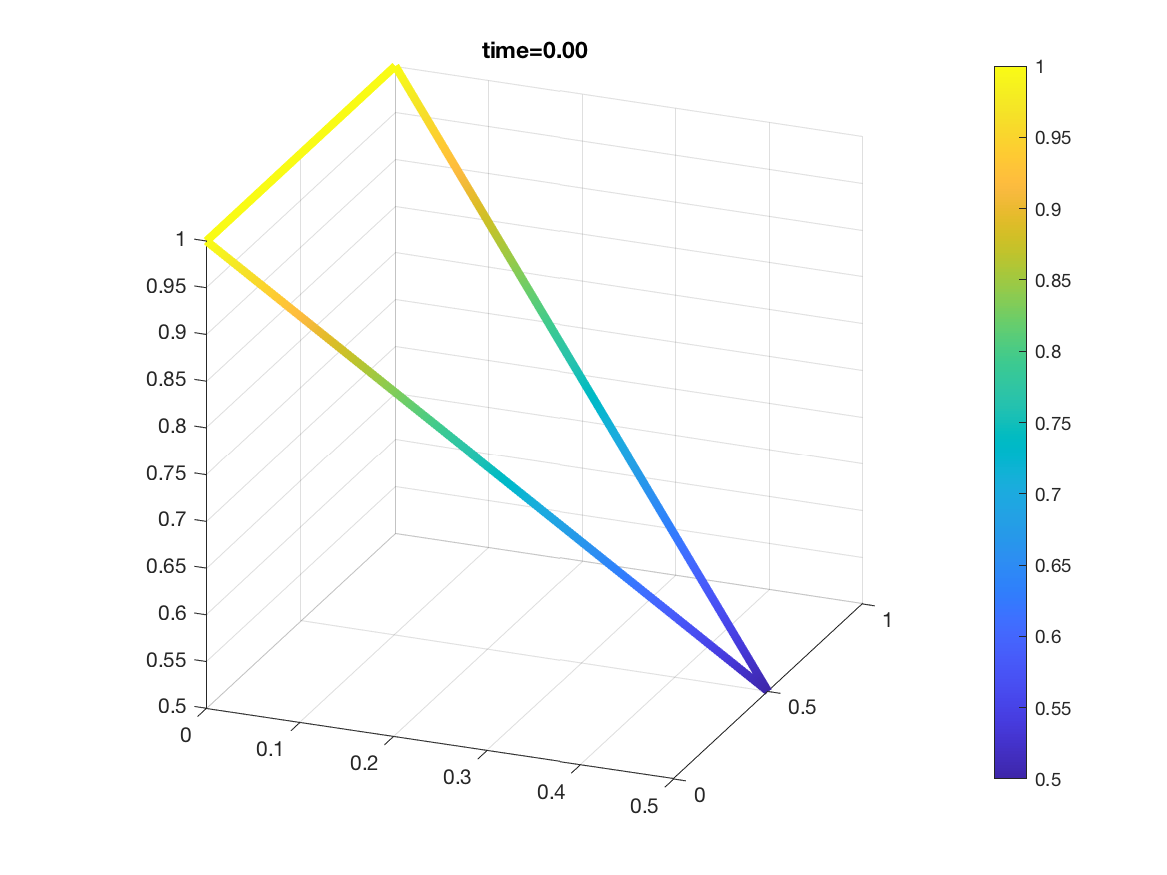}\includegraphics[width=0.33\textwidth]{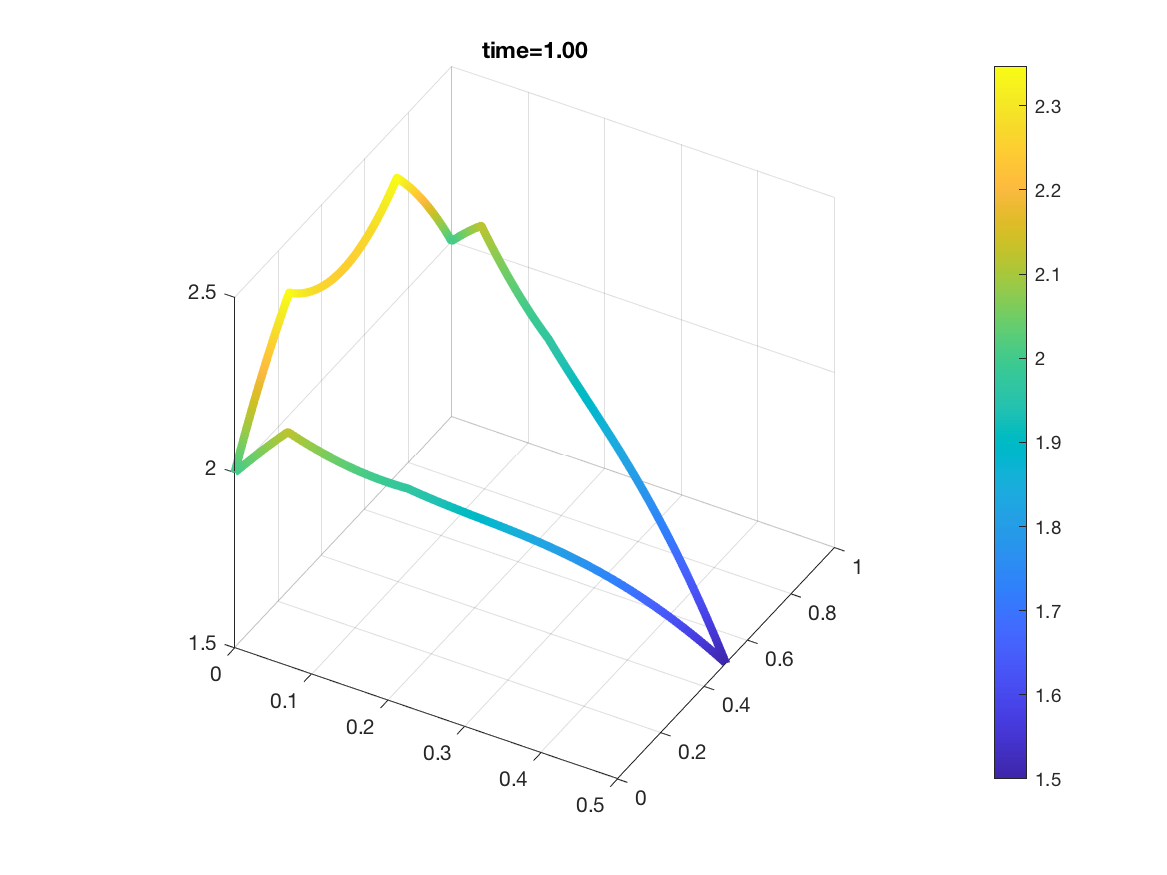}\includegraphics[width=0.33\textwidth]{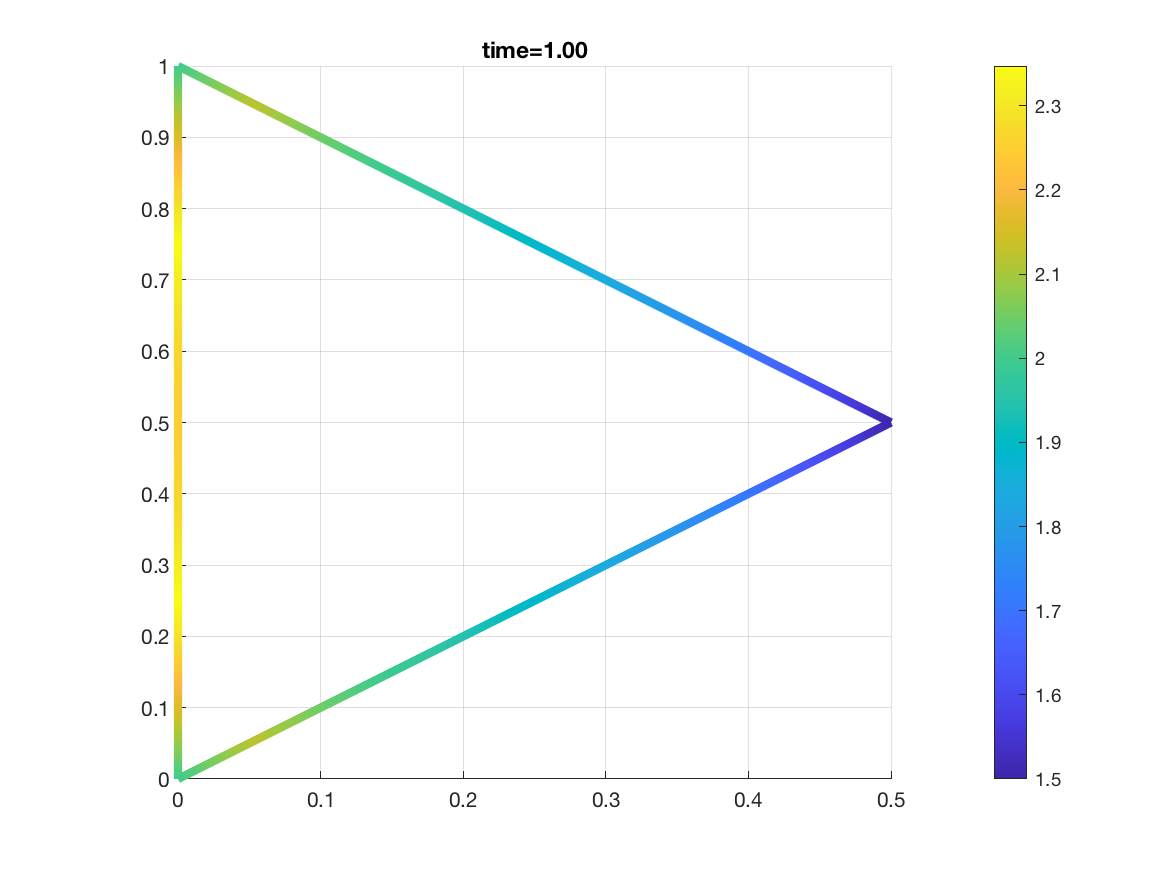}
\caption{Example in  Section~\ref{sec:test1} with Hamiltonian depending on $s$. Initial condition (left) and approximated solution (center, right) at final time $T=1$ with $c_1=c_2=c_3=1$, $\Delta x =6.25 \cdot 10^{-2}$ and $\Delta t=\Delta x^{4/5}/2$  \label{fig:test1c}.  }
\end{figure}
We compute a reference solution using \eqref{eq:scheme} with $\Delta x=10^{-3}$ and  $\Delta t=\Delta x^{4/5}/2$.
In Table \ref{tab:test2a}, the errors and computation times for scheme \eqref{eq:scheme} and the SL scheme are shown.
  We remark that when the characteristics are not affine, both schemes have order one of consistency, and so the new scheme is advantageous in terms of computation time, without any loss of accuracy.
Note, finally, that there is  an almost $99\%$ reduction in computation time for  comparable errors.

\begin{table}[h]
\begin{small}
\centering
\begin{tabular}{lllllllllll}
\hline
 ${\Delta x}$ &$E^{\infty}$ &$E^{1}$     &time&$E^{\infty}$   &$E^{1}$     &time\\
 \hline
 $1.00\cdot 10^{-1}$   &$4.58\cdot 10^{-2}$              &$5.69 \cdot 10^{-2}$  & $0.04s$   &$4.66\cdot 10^{-2}$      &$5.85\cdot 10^{-2}$ &$0.87s$ \\
$5.00\cdot 10^{-2}$    &$2.50\cdot 10^{-2}$              &$2.83 \cdot 10^{-2}$ &$0.18s$  &$2.81\cdot 10^{-2}$          &$3.35\cdot 10^{-2}$  &$5.22s$\\
$2.50\cdot 10^{-2}$    &$1.34\cdot 10^{-2} $      &$1.61\cdot 10^{-2}$       &$0.68s$   &$1.57\cdot 10^{-2} $      &$1.89\cdot 10^{-2} $  &$39.2s$  \\
$1.25\cdot 10^{-2}$     &$7.67\cdot 10^{-3} $   &$9.30 \cdot 10^{-3}$    &$3.94s$  &$8.61\cdot 10^{-3} $   &$1.03\cdot 10^{-2} $     &$306s$\\
$6.25\cdot 10^{-3}$     &$4.01\cdot 10^{-3}$  &$1.48 \cdot 10^{-3}$   & $27.5s$ &$4.44\cdot 10^{-3} $ &$5.24\cdot 10^{-3} $         &$2664s$\\
\hline
\end{tabular}
\end{small}
\vspace{0.2cm}
\caption{
Errors, computed at  time $T=1$ with $\Delta t=\Delta x^{4/5}/2$, and convergence rate  for example in Section \ref{sec:test1} with Hamiltonian  dependent on $x$. Columns 2-4 refer to the  the new scheme. Columns 5-7 refer to  the SL scheme \cite{carlini20}. \label{tab:test2a}}
\end{table}
\subsection{Test on a Traffic circle}\label{sec:Test2}
We consider as $\Gamma \subset \R^2$ a network formed by 8 vertices and 12 arcs. The vertices are defined as $v_1 =(-2,0), \; v_2=(-1,0),  \;  v_3=(0,2),\; v_4 =(0,1), \; v_5=(2,0),  \;  v_6=(1,0),\;v_7=(0,-2),  \;  v_8=(0,-1)$. The  arcs are defined, for $s\in [0,1]$, as
$$\begin{array}{crc}
 \ga_1(s)= (1-s)  v_1+ s v_2, &\ga_2(s)= (1-s)  v_1+ s  v_3,  &\ga_3(s)= (1-s) v_1+ s v_7,\\
 \ga_4(s)= (1-s)  v_2+ s v_4, &\ga_5(s)= (1-s) v_2+ s v_8,&\ga_6(s)= (1-s)  v_3+ s  v_4,\\
\ga_7(s)= (1-s)  v_3+ s v_5,&\ga_8(s)= (1-s)  v_4+ s v_6,&\ga_9(s)= (1-s)  v_5+ s v_6,\\
\ga_{10}(s)= (1-s) v_5+ s v_7,&\ga_{11}(s)= (1-s) v_6+ sv_8,&\ga_{12}(s)= (1-s) v_7+ s  v_8,
\end{array}$$
plus the reversed arcs. Let  $\{\gamma_1,\dots,\gamma_{12}\} \in {\bf{E^+}} \subset \Gamma$.
For any $\gamma \in \mathbf{E}^+$, consider the following Hamiltonians, $$H_{\ga}(s,\mu)=\frac{1}{2}|\mu|^2-|\gamma(s)-\bar x|^2,$$
where  $\bar x=(1,1)$ plays the role of a target, since the cost reaches its minimum on $\bar x$.
For $i\in \{1,\dots,8\}$, we call $c_{i}$ the flux limiter  on the  vertex $v_i$, which in order to be admissible must be such that  $c_1\leq 2$, $c_2\leq 1$, $c_3\leq 0$, $\leq c_4\leq 0.5$, $c_5\leq 0$, $c_6\leq 0.5$, $c_7\leq 2$, and $ c_8\leq 1$.
We take $g=0$ as the initial condition, set all flux limiters equal to their maximum admissible value, and calculate the solution of the scheme \eqref{eq:scheme}
with $\Delta t=(\Delta x)^{4/5} / 2$, $\Delta x=0.025$. 
  In Figure \eqref{fig:test2a}, we show the approximated solution at $T=2$ and $T=5$.  At $T=2$, we  observe  local minima reached  at all the vertices,  at the target and at $(0.5,0.5)$. Although one might expect a single global minimum to be reached at the target for large time,  we instead observe   three global minima at $T=5$,  at the target and  at the adjacent  two vertices.

  We proceed considering the case with two targets, the  Hamiltonian is now discontinuous. With respect to the previous example, we change  the definition of the Hamiltonian in the arcs of the  inner circle, and in the arcs connecting  inner  and outer circle, taking into account the new target $\tilde x =(-0.5,-0.5)$ and  setting
 $$H_{\ga}(s,\mu)=\frac{1}{2}|\mu|^2-|\gamma(s)-\tilde x|^2.$$
  The compatibility conditions for   flux limiters are
  $c_1\leq 0.5$, $c_2\leq 0$, $c_3\leq 0$, $c_4\leq 0.5$, $c_5\leq 0$, $c_6\leq 0.5$, $c_7\leq 0.5$, and $c_8\leq 0$.
The graphs, at the top of  Figure \ref{fig:test2b},   show the approximated value function, computed at $T=2$ with flux limiters  equal to the maximum admissible value.  Also in this case, global minima are reached at the target points and at the adjacent vertices. By choosing a lower value for the flux limiter relative to the vertices adjacent to the target points, we set e.g.  $c_2=c_3=c_4=c_8=-1$,
  the  approximated value function, shown  at the bottom of
  Figure \ref{fig:test2b}, reaches the global minima only at the vertices adjacent to the  targets.
  This  example  highlights the artificial effect introduced by flux limiters, especially the non--maximal ones, in the  control problem, where optimal trajectories are  expected to start from target points.
 \begin{figure}[htbp]
\centering
	\includegraphics[width=0.5\textwidth]{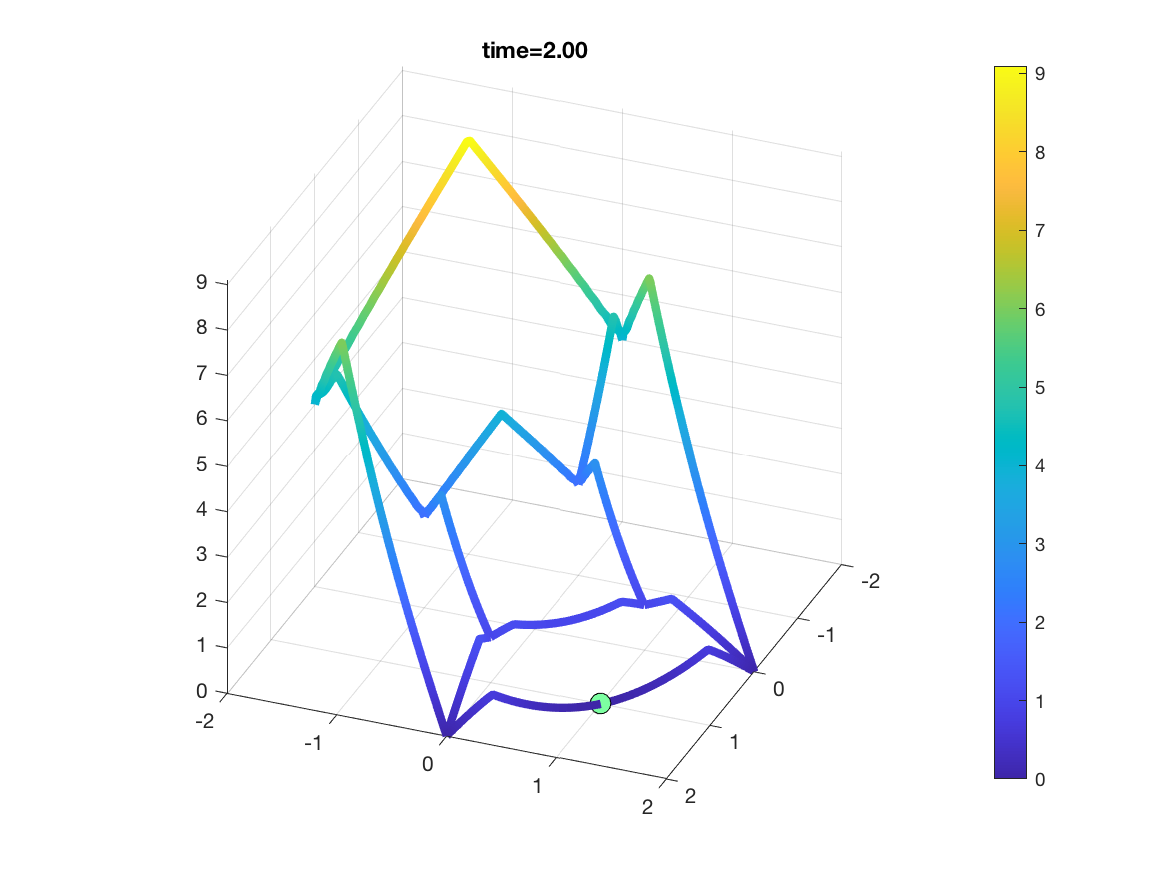}\includegraphics[width=0.5\textwidth]{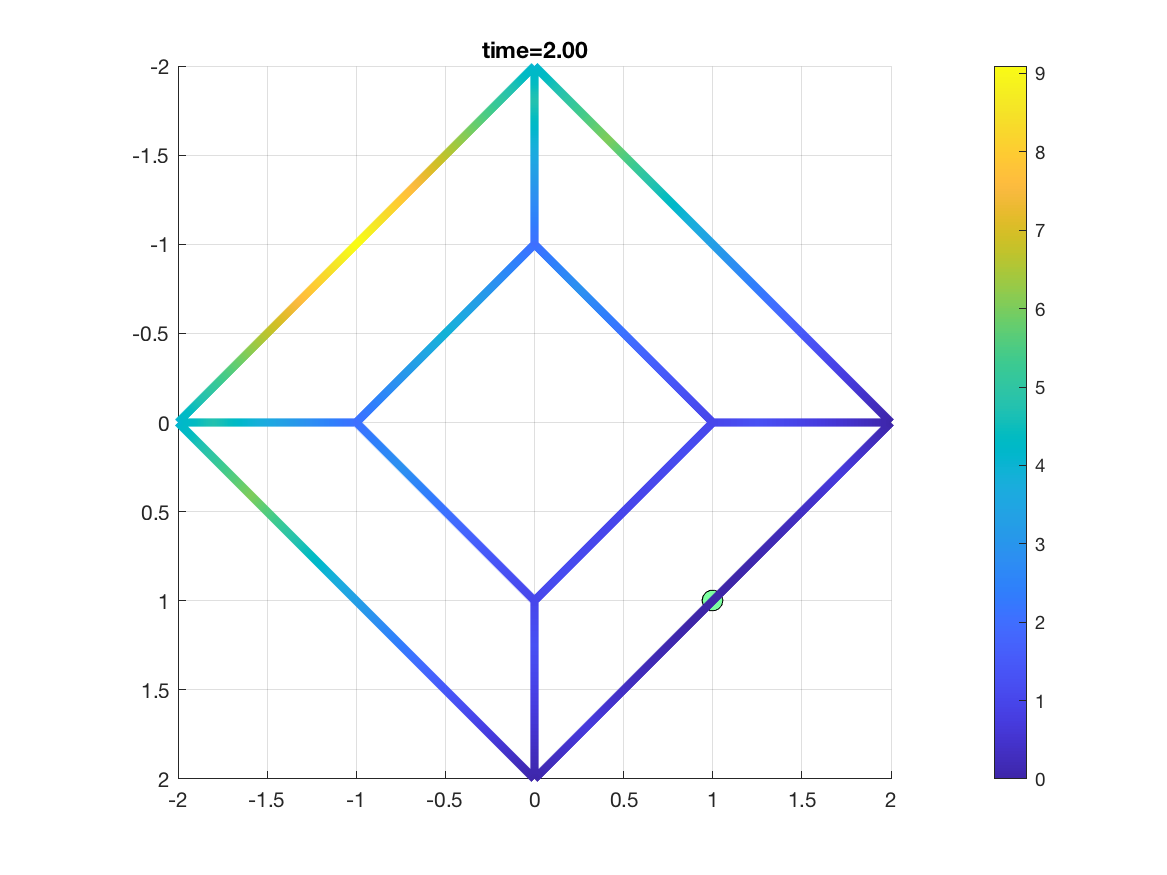}
	\centering
	\includegraphics[width=0.5\textwidth]{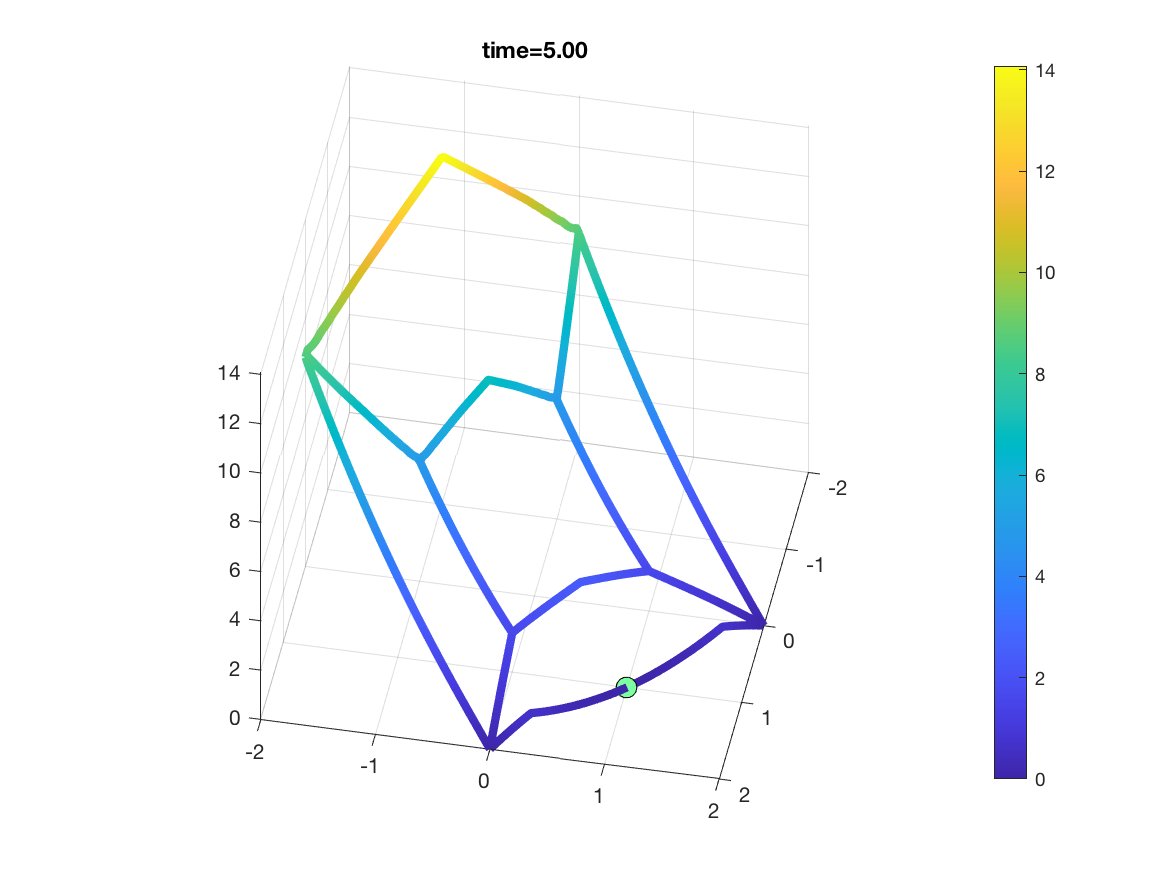}\includegraphics[width=0.5\textwidth]{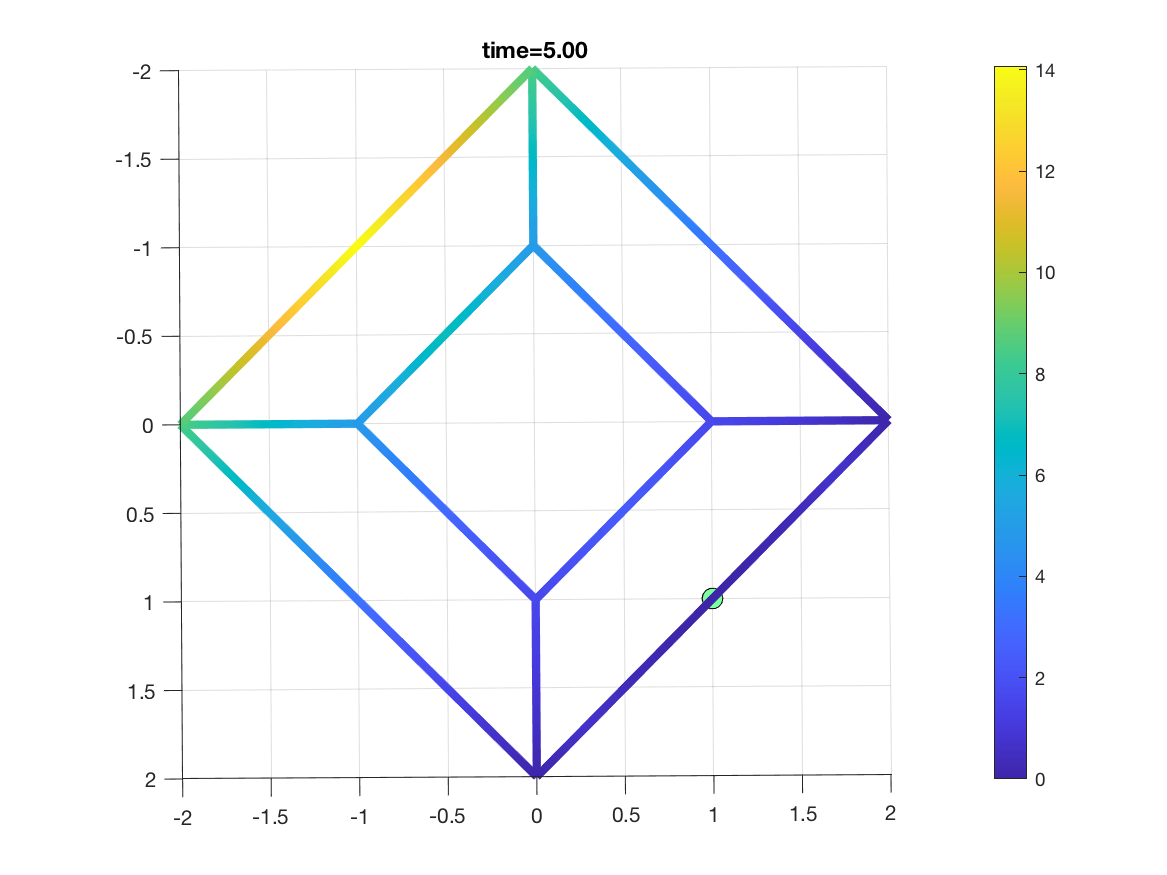}

\caption{Example in Section \ref{sec:Test2} with continuous Hamiltonian \label{fig:test2a} at time $T=2$ and $T=5$ \label{fig:test2}}
\end{figure}

 \begin{figure}[htbp]
\centering
	\includegraphics[width=0.5\textwidth]{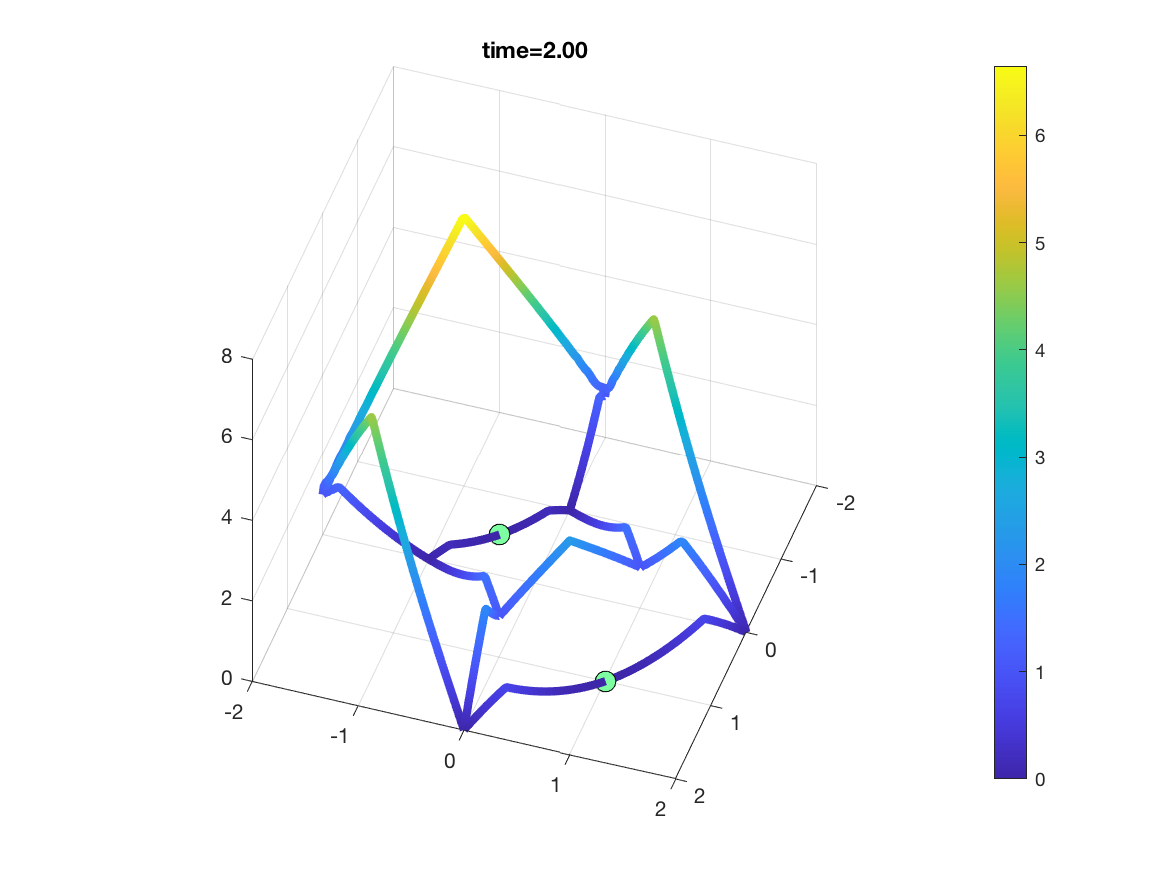}\includegraphics[width=0.5\textwidth]{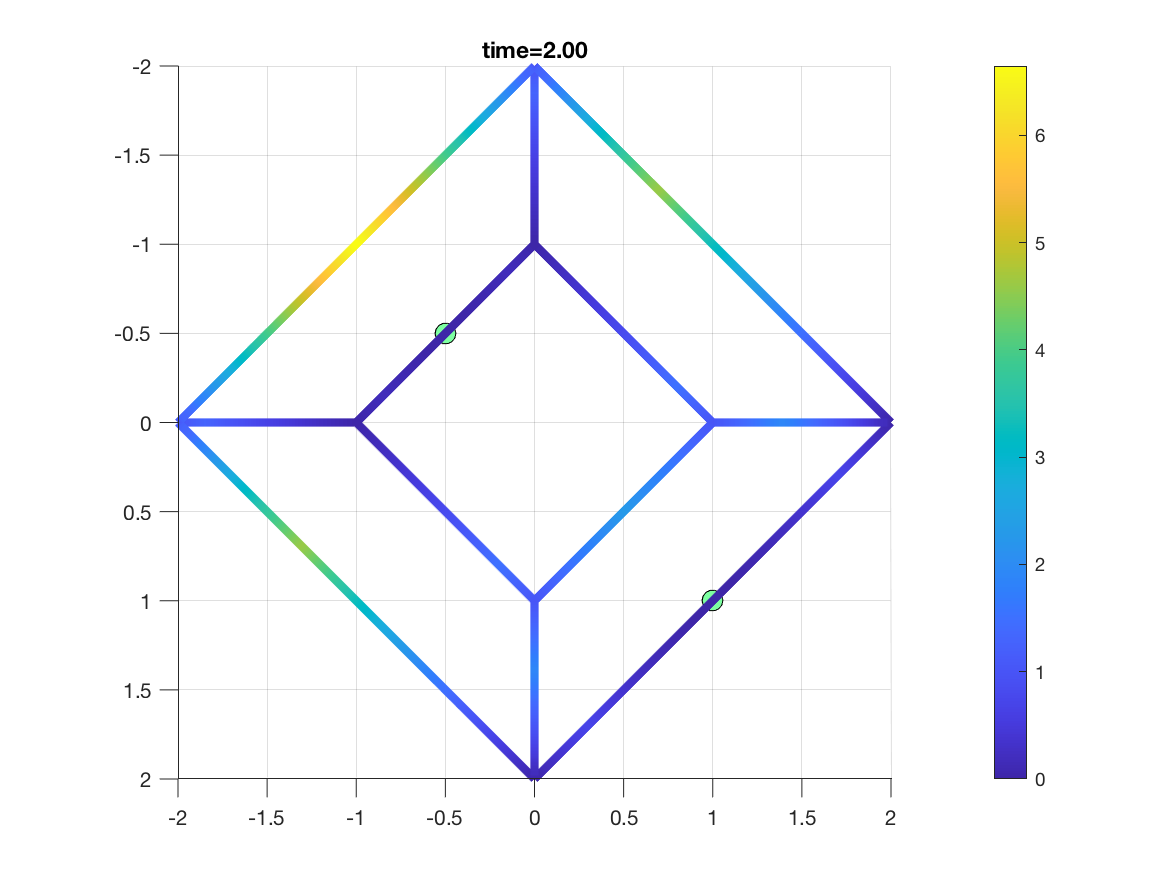}
		\includegraphics[width=0.5\textwidth]{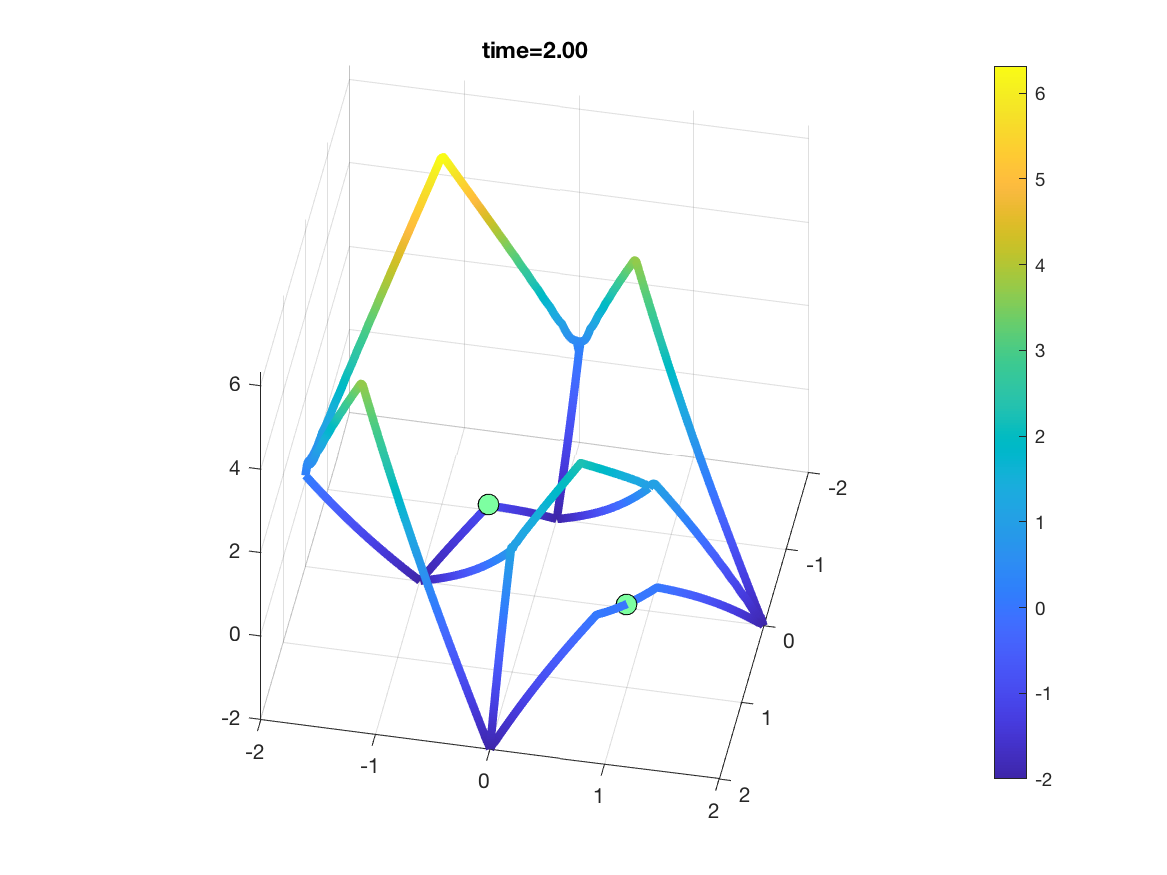}\includegraphics[width=0.5\textwidth]{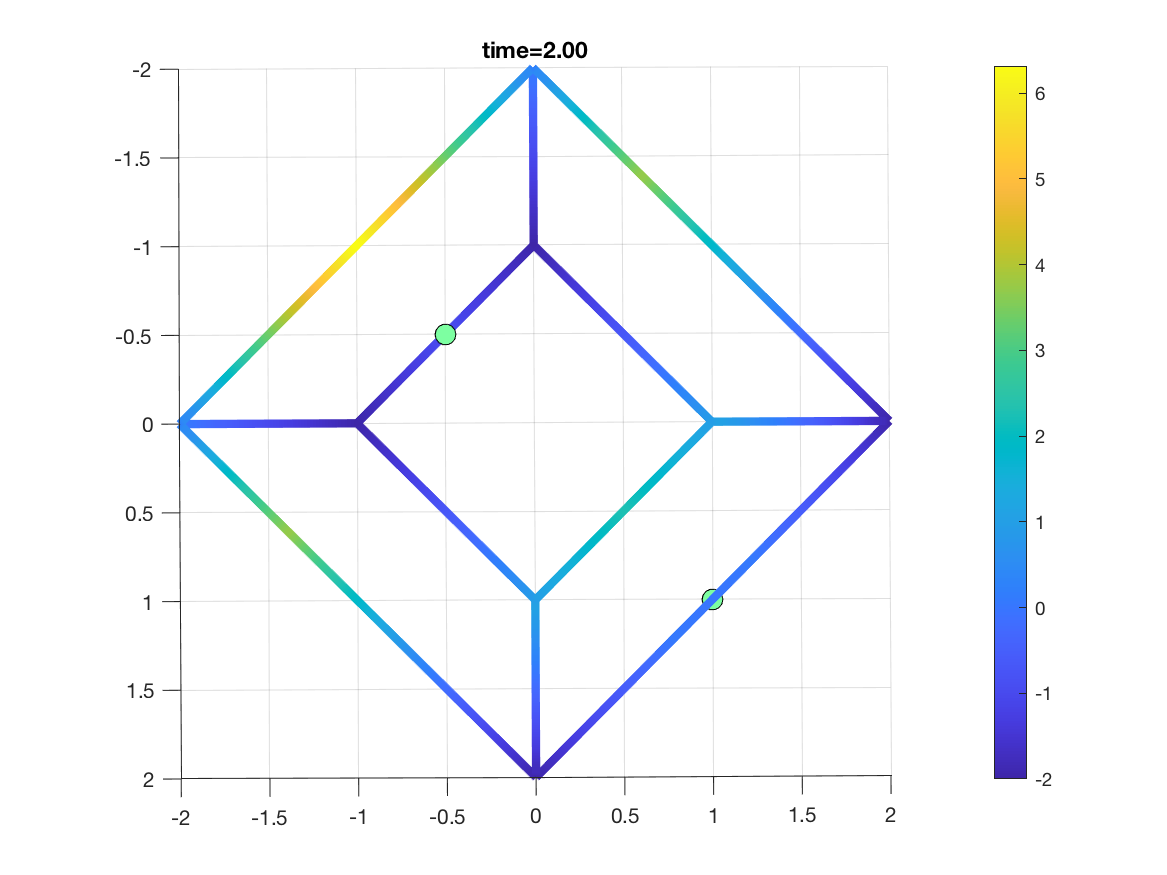}
\caption{Example in Section \ref{sec:Test2} with discontinuous Hamiltonian \label{fig:test2b} at time $T=2$ and flux limiters $c_2=c_3=c_4=c_8=0$ (top), and $c_2=c_3=c_4=c_8=-1$(bottom) }
\end{figure}

\bigskip

\appendix

\section{Modification of an Hamiltonian/Lagrangian}\label{penalized}

 We are given an Hamiltonian $\wtd H:[0,a] \times \R$, for some $a >0$,  satisfying assumptions {\bf(H1)}--{\bf(H4)}, namely $\wtd H$ is  continuous in both arguments, convex   and  superlinear  in the momentum variable, uniformly  in $s$, and finally the function $s \mapsto \wtd H(s,\mu)$ is  Lipschitz  continuous in $[0,a]$ for any $\mu \in \R$.   Due to superlinearity condition, a finite Lagrangian $\wtd L$ can be defined in $[0,a] \times \R$ through Fenchel transform
 \[\wtd L(x,\la)= \max_\mu \mu \, \la - \wtd H(x,\mu).\]

For any   compact   interval  $I$, we show in this appendix that $\wtd H$, and consequently by duality $\wtd L$,  can be modified  in order to obtain  a new  pair of Hamiltonian/Lagrangian  $ H$/$ L$  with the property that
\[  H(s,\mu)= \wtd H(s,\mu) \qquad\hbox{for $(s,\mu) \in [0,a] \times I$,}\]
 $ H$ is   linear at infinity with respect to the momentum variable, and  consequently $ L$ is   infinite outside a given compact of the variable $\la$.

As already pointed out in Section \ref{assu}, the  assumptions {\bf(H1)}--{\bf(H4)} imply that $\wtd H$ is locally Lipschitz continuous in $[0,a] \times \R$.
We denote by $\beta$ the  Lipschitz constant of $\wtd H$ in  $[0,a] \times \wtd I$, where $\wtd I$ is a compact neighborhood of $I$,  therefore
\begin{equation}\label{pena1}
 |b| \leq \beta   \qquad\hbox{for any $(s,\mu) \in [0,a] \times I$, $b \in \partial_\mu \wtd H(s,\mu)$,}
\end{equation}
where $\partial_\mu \wtd H(s,\mu)$ stands for the subdifferential of $\wtd H(s,\cdot)$ at $\mu$. Since $\wtd H$ is superlinear in the second argument, we can  fix $\beta_0 > \beta$ and  select $\mu_0 > 0$ such that
\begin{eqnarray*}
  \wtd H(s,\mu) &>& \wtd H(s, -\mu_0) - \be_0 (\mu +\mu_0)  \qquad\hbox{ for any $s \in [0,a]$, $\mu \in (-\infty , -\mu_0)$}\\
  \wtd H(s,\mu) &>& \wtd H(s, \mu_0) + \be_0 (\mu-\mu_0)  \qquad\hbox{ for any $s \in [0,a]$, $\mu \in (\mu_0, +\infty)$.}
\end{eqnarray*}
Therefore
\begin{eqnarray*}
  - \be_1(s) &\in& \partial_\mu \wtd H(s,-\mu_0)  \qquad\hbox{for any $s \in [0,a]$, some $\be_1(s) \geq \be_0$} \\
  \be_2(s)  &\in& \partial_\mu \wtd H(s,\mu_0) \qquad\hbox{for any $s \in [0,a]$, some $\be_2(s) \geq \be_0$.}
\end{eqnarray*}
This in turn implies by \eqref{pena1} that $\pm \mu_0 \not\in I$ and $[-\mu_0,\mu_0] \supset I$. We proceed defining
\[  H_0(s,\mu) = \left \{\begin{array}{cc}
                               \wtd H(s,\mu) & \quad\hbox{for any $s \in [0,a]$, $\mu \in [-\mu_0 , \mu_0]$} \\
                               \wtd H(s, -\mu_0) + \be_0 (\mu_0-\mu)  &\quad\hbox{ for any $s \in [0,a]$, $\mu \in (-\infty , -\mu_0)$} \\
                               \wtd H(s, \mu_0) + \be_0 (\mu-\mu_0) & \quad\hbox{ for any $s \in [0,a]$, $\mu \in (\mu_0, + \infty)$}
                             \end{array} \right .\]
Bearing in mind that $\wtd H$ is locally Lipschitz continuous in $[0,a] \times \R$, we see from the above definition that $ H_0$ is Lipschitz continuous in $[0,a] \times \R$.

We  finally define $ H(s,\cdot)$ as the convex envelope of $ H_0(s,\cdot)$, for $s \in  [0,a]$. We denote by $  L(s,\la)$  the convex conjugate of $ H(s,\mu)$.  Since $ H(s,\cdot)$ is continuous for any $s$, we have, see \cite{Rockafellar} Theorem 12.2
\begin{figure}[htbp]
\centering
	\includegraphics[width=0.6\textwidth]{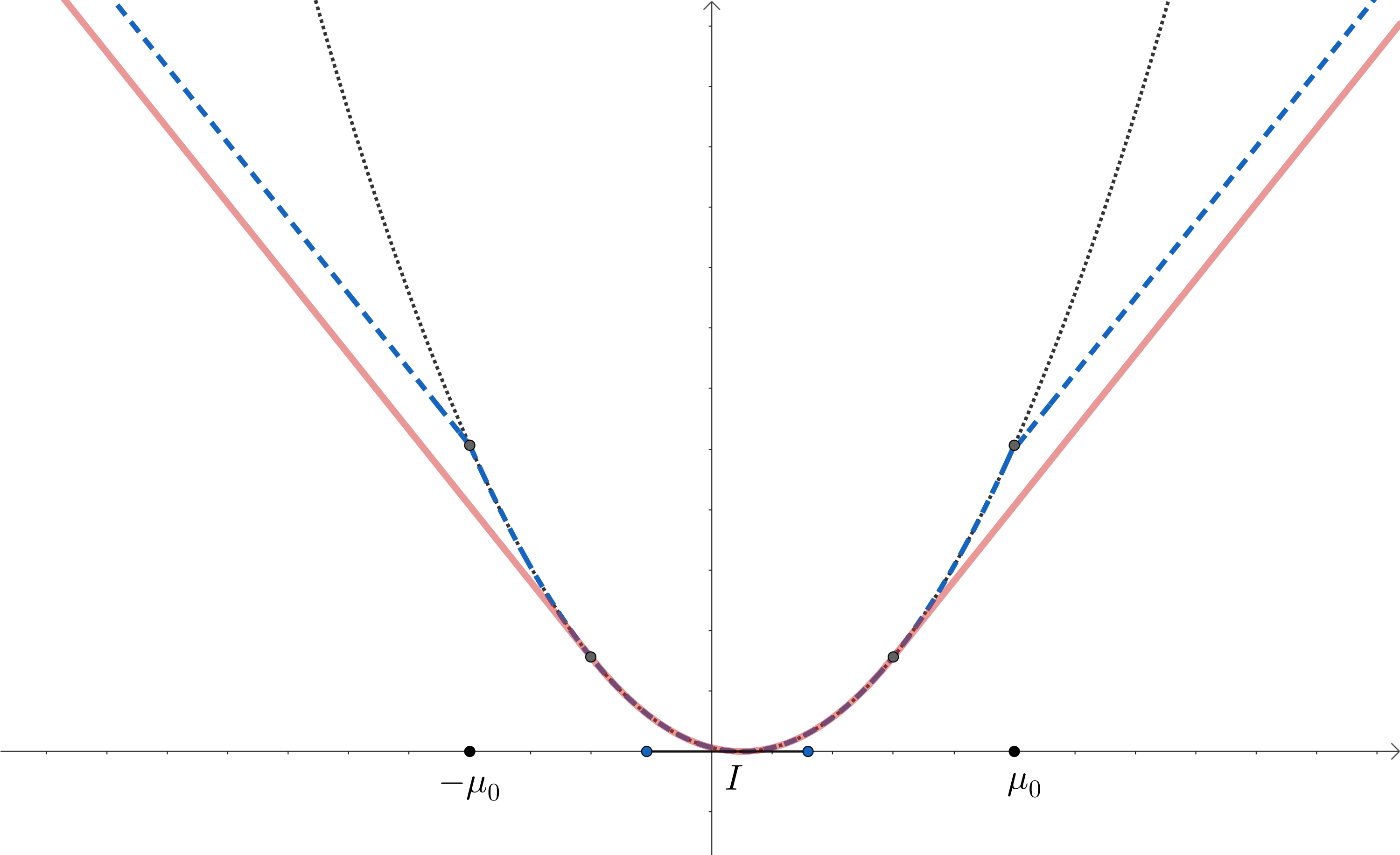}
\caption{$\wtd H(s,\cdot)$ (black dotted-line),  $ H_0(s,\cdot)$ (blue dashed-line), $ H(s,\cdot)$ (red solid line) }
 \label{fig:convessificata}
\end{figure}
\begin{Lemma} The convex conjugate of $ L(s,\cdot)$ is $ H(s,\cdot)$ for any $s \in [0,a]$.
\end{Lemma}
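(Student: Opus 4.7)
The plan is to apply the Fenchel--Moreau biconjugate theorem (Rockafellar, Theorem 12.2, already cited just above the statement). The claim is that the Legendre--Fenchel transform is involutive on $H(s,\cdot)$, i.e.\ that $H(s,\cdot)=H(s,\cdot)^{**}$, where $L(s,\cdot)=H(s,\cdot)^*$.

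The first task is to verify the hypotheses of the biconjugate theorem: for each fixed $s\in[0,a]$, the function $\mu\mapsto H(s,\mu)$ must be proper, convex, and lower semicontinuous on $\R$. Convexity is built in, since by construction $H(s,\cdot)$ is the convex envelope of $H_0(s,\cdot)$. Properness follows from the fact that $H_0(s,\cdot)$ is real--valued on $\R$ and grows at most linearly outside $[-\mu_0,\mu_0]$ (by the very definition of $H_0$), so its convex envelope cannot take the value $-\infty$ and is finite everywhere. Lower semicontinuity then follows from continuity of $H(s,\cdot)$, which was already noted in the text just before the lemma.

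With these three properties in hand, Rockafellar's Theorem 12.2 yields immediately that the biconjugate of $H(s,\cdot)$ coincides with $H(s,\cdot)$, i.e.\ $(L(s,\cdot))^*(\mu)=H(s,\mu)$ for every $\mu\in\R$, which is the statement.

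The only delicate point, and the one worth spelling out, is to confirm that the convex envelope of a continuous, real--valued function that is affine (hence linear at infinity) outside a compact set is itself continuous and real--valued on all of $\R$. This can be seen directly: the epigraph of $H_0(s,\cdot)$ is closed and is contained in the epigraph of a suitable affine minorant (deducible from the linear tails with slopes $\pm\beta_0$), so its closed convex hull---whose lower boundary is exactly $H(s,\cdot)$---remains the epigraph of a finite, lower semicontinuous convex function on $\R$; finiteness combined with convexity on $\R$ forces continuity. No further argument is needed.
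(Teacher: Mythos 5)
Your proof is correct and takes essentially the same route as the paper: the paper's entire justification is the remark that $H(s,\cdot)$ is continuous (hence closed and proper) and convex, followed by a citation of Rockafellar, Theorem 12.2, i.e.\ the biconjugate theorem. Your extra verification that the convex envelope of $H_0(s,\cdot)$ is finite, proper and continuous on all of $\R$ merely fills in details the paper leaves implicit.
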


\smallskip
We see directly from the above construction that
\begin{equation}\label{pena0}
  H(s,\mu) \leq \wtd H(s,\mu) \qquad\hbox{and} \quad  L(s,\la) \geq \wtd L(s,\la) \qquad\hbox{for any $\la$, $\mu$, $s$.}
\end{equation}
\smallskip

\begin{Lemma}\label{toy} We have
\begin{itemize}
  \item[{\bf (i)}] $ H(s,\mu)= \wtd H(s,\mu)$ in $[0,a] \times I$;
  \item[{\bf (ii)}] $ L(s,\la) \equiv + \infty$ in $[0,a] \times \big ( \R \setminus [-\be_0,\be_0] \big )$;
  \item[{\bf (iii)}] $ L$ is Lipschitz continuous in $[0,a] \times [-\be_0,\be_0]$.
\end{itemize}
\end{Lemma}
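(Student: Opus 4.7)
The plan is to establish items (i), (ii), and (iii) in order, exploiting the explicit piecewise structure of $H_0$ together with the identification $H=H_0^{**}$ coming from the convex envelope construction.

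For (i), since $I\subset(-\mu_0,\mu_0)$ strictly and $H_0\equiv\wtd H$ on $[0,a]\times[-\mu_0,\mu_0]$, the inequality $H(s,\mu)\le\wtd H(s,\mu)$ on $[0,a]\times I$ is immediate from \eqref{pena0}. For the reverse inequality, I fix $(s_\ast,\mu_\ast)\in[0,a]\times I$ and pick a subgradient $b\in\partial_\mu\wtd H(s_\ast,\mu_\ast)$; by \eqref{pena1} we have $|b|\le\be<\be_0$. The affine tangent $\ell(\nu):=\wtd H(s_\ast,\mu_\ast)+b(\nu-\mu_\ast)$ is a global minorant of $\wtd H(s_\ast,\cdot)$ by convexity, hence of $H_0(s_\ast,\cdot)$ on $[-\mu_0,\mu_0]$. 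For $\nu>\mu_0$ I write $\ell(\nu)=\ell(\mu_0)+b(\nu-\mu_0)\le\wtd H(s_\ast,\mu_0)+\be_0(\nu-\mu_0)=H_0(s_\ast,\nu)$, using $b<\be_0$ and $\ell(\mu_0)\le\wtd H(s_\ast,\mu_0)$; the case $\nu<-\mu_0$ is symmetric. Thus $\ell$ is a global affine minorant of $H_0$, so $\ell\le H$, giving $\wtd H(s_\ast,\mu_\ast)=\ell(\mu_\ast)\le H(s_\ast,\mu_\ast)$.

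For (ii), the key remark is that $H(s,\cdot)$ is Lipschitz in $\mu$ with constant at most $\be_0$. Indeed, $H(s,\cdot)$ is the pointwise supremum of the affine minorants $\nu\mapsto b\nu+c$ of $H_0(s,\cdot)$, and any such minorant must satisfy $|b|\le\be_0$: otherwise it would exceed $H_0(s,\nu)=\pm\be_0\nu+O(1)$ at $\pm\infty$. In particular $H(s,\mu)\le H(s,0)+\be_0|\mu|$, so for $|\la|>\be_0$ the map $\mu\mapsto\la\mu-H(s,\mu)$ diverges to $+\infty$ along $\mu\to\sgn(\la)\infty$, forcing $L(s,\la)=+\infty$.

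For (iii) my plan rests on the \emph{biconjugate identity} $L=L_0:=H_0^\ast$. Since $H_0$ is continuous and $H$ is its convex envelope, $H=H_0^{\ast\ast}$, and hence $L=H^\ast=(H_0^{\ast\ast})^\ast=H_0^\ast$, because conjugation is insensitive to taking the lsc convex hull (it only sees affine minorants). This shifts the computation to the explicit $H_0$. Splitting $\sup_\mu(\la\mu-H_0(s,\mu))$ over the three pieces of $H_0$ shows that, for $\la\in[-\be_0,\be_0]$, the suprema over $\{\mu>\mu_0\}$ and $\{\mu<-\mu_0\}$ (where $H_0$ is affine with slope $\pm\be_0$) are subsumed by the endpoint values $\la\mu_0-\wtd H(s,\mu_0)$ and $-\la\mu_0-\wtd H(s,-\mu_0)$, yielding
\[
L(s,\la)=\sup_{\mu\in[-\mu_0,\mu_0]}\bigl(\la\mu-\wtd H(s,\mu)\bigr),\qquad (s,\la)\in[0,a]\times[-\be_0,\be_0].
\]
This is a sup over a compact $\mu$-interval of functions jointly Lipschitz in $(s,\la)$, with Lipschitz constant $\mu_0$ in $\la$ and the Lipschitz constant of $\wtd H$ in $s$ uniformly for $\mu\in[-\mu_0,\mu_0]$. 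Therefore $L$ inherits joint Lipschitz continuity on $[0,a]\times[-\be_0,\be_0]$, proving (iii).

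The main conceptual obstacle is (iii): the convex envelope $H$ need not coincide with $H_0$ outside $I$ (for instance $H_0$ can exhibit a concave kink at $\pm\mu_0$ when $\wtd H$ has one-sided slope of modulus exceeding $\be_0$, which the envelope flattens), so there is no direct formula for $H$ to plug into the Fenchel transform. Routing through $L=L_0$ avoids ever having to describe $H$ explicitly, and in particular Lipschitz continuity up to the boundary $\la=\pm\be_0$, which would be delicate to verify by bounding maximizers in $L(s,\la)=\sup_\mu(\la\mu-H(s,\mu))$ as $\la\to\pm\be_0$, becomes automatic from the compact-sup representation.
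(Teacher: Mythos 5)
Your proposal is correct and follows essentially the same route as the paper: item (i) via the tangent line with slope bounded by $\be$ being a global affine minorant of $H_0$, and item (iii) via the identity $L=H_0^\ast$ (the paper cites Hiriart--Urruty, Cor.\ 1.3.6) reducing the conjugate to a supremum over the compact interval $[-\mu_0,\mu_0]$ and then the standard Lipschitz estimate for suprema of equi-Lipschitz families. The only (minor) divergence is in (ii), where you deduce divergence of $\mu\mapsto\la\mu-H(s,\mu)$ from the $\be_0$-Lipschitz bound on $H$ itself, while the paper computes $\sup_{\mu>\mu_0}\big(\mu\la-H_0(s,\mu)\big)=+\infty$ directly on the outer affine rays; both are equally elementary.
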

\begin{proof}  We fix $s \in [0,a]$.  To prove {\bf (i)},  we first claim that if $\mu_* \in I$ then the line
\[(\mu, b \, (\mu - \mu_*) +  H_0(s,\mu_*)) \qquad \mu \in \R\]
stays below the graph of $ H_0(s, \cdot)$ for any $b$ in the subdifferential of $\wtd H(s,\cdot)$ at $\mu_*$. If not, due to the convex character of $\wtd H(s,\cdot)$, a line of the above type should cross the graph of $ H_0(s, \cdot)$ at points  outside $[-\mu_0,\mu_0]$ where $ H_0(s, \cdot)$ is linear with angular coefficient $\be_0$ or $-\be_0$. This implies that $|b| > \be_0$, which is in contradiction with the Lipschitz constant of $\wtd H(s,\cdot)$ in $I$ being strictly less or equal to $\be_0$, see \eqref{pena1}.

Bearing in mind that the $ H(s,\cdot)$ is the pointwise maximum of the  convex functions not exceeding $ H_0(s,\cdot)$, we deduce that
\[ H(s,\mu) \geq b \, (\mu - \mu_*) +  H_0(s,\mu_*)\]
which yields
\[ H(s,\mu_*) \geq  H_0(s,\mu_*) = \wtd H(s,\mu_*)\]
and since the opposite inequality is trivially true, we end up with
\[  H(s,\mu_*) = \wtd H(s,\mu_*),\]
as was claimed in item {\bf (i)}.
To prove {\bf (ii)}, we take into account that the Fenchel transform of $ H_0(s,\cdot)$ and $ H(s,\cdot)$ coincide, see \cite{Hiriart} Corollary 1.3.6.  By taking $\la > \be_0$, we  get
\begin{eqnarray*}
   L (s,\la) &\geq& \sup_{\mu \in (\mu_0,+\infty)}\mu \, \la -  H_0(s,\mu) \\
   &=& \sup_{\mu \in (\mu_0,+\infty)} \mu \, (\la - \be_0 ) + \be_0 \, \mu_0 -  \wtd H(s,\mu_0) = + \infty.
\end{eqnarray*}
If instead $\la < -\be_0$, we have
\begin{eqnarray*}
   L (s,\la) &\geq& \sup_{\mu \in (-\infty, -\mu_0)}\mu \, \la -  H_0(s,\mu) \\
   &=& \sup_{\mu \in (-\infty, -\mu_0)} \mu \, (\la + \be_0 ) - \be_0 \, \mu_0 -  \wtd H(s,\mu_0) = + \infty,
\end{eqnarray*}
which completes the proof of {\bf (ii)}.  We proceed pointing out  that for  $\la \in [- \be_0,\be_0]$ the function
\[ \mu \mapsto \mu \, \la -  H_0(s,\mu) = \mu \, (\la - \be_0 ) + \be_0 \, \mu_0 -  \wtd H(s,\mu_0) \]
is decreasing for $\mu$ varying in $[\mu_0, + \infty)$ and the same property holds true for $ \mu \in (- \infty,, -\mu_0]$. this implies that
\begin{equation}\label{pena2}
 L(s,\la)= \max_{\mu \in [ -\mu_0,\mu_0]} \mu \, \la -  H_0(s,\mu) \quad\hbox{for $\la \in [-\be_0,\be_0]$.}
 \end{equation}
 Given $s_1$, $s_2$ in $[0,a]$, $\la \in [-\be_0,\be_0]$, we have
 \[ L(s_1, \la) -  L(s_2,\la) \leq \mu_1 \, \la -  H_0(s_1,\mu_1) - \mu_1 \, \la +  H_0(s_2,\mu_1),\]
 where $\mu_1$ is an optimal element for $ L(s_1, \la)$. We derive
\begin{equation}\label{pena3}
   L(s_1, \la) -  L(s_2,\la) \leq \ell \, |s_1 - s_2|,
 \end{equation}
 where $\ell$  is a Lipschitz constant for $ H_0$ in $[0,a] \times \R$. Given $s \in [0,a]$, $\la_1$, $\la_2$  in $[-\be_0,\be_0]$, we have
\[ L(s, \la_1) - L(s,\la_2) \leq \mu_1 \, \la_1 -  H_0(s,\mu_1) - \mu_1 \, \la_2 +  H_0(s,\mu_2),\]
 where $\mu_1$ is an optimal element for $ L(s, \la_1)$. Taking into account \eqref{pena2} we get
 \begin{equation}\label{pena4}
   L(s, \la_1) - L(s,\la_2) \leq  \mu_0 \, |\la_1 - \la_2|
 \end{equation}
 Inequalities \eqref{pena3}, \eqref{pena4} show item {\bf (iii)}.
\end{proof}
\bigskip

\section{Spaces convergence}

In this section we present in a abstract frame some convergence results we use in the paper. We are given a  metric space $Y$ with distance denoted by $d$.

\begin{Definition}\label{abspaces} Given a sequence  of closed  subspaces $Y_m \subset Y$, we say that  $Y_m$ converges to $Y$ as $m$ goes to infinity, mathematically $Y_m \to Y$, if for any $y \in Y$ there exists a sequence $y_m \in Y_m$ with
\[y_m \to y.\]
\end{Definition}

\smallskip

\begin{Definition}\label{deftoy} Assume that $Y_m \to Y$ in the sense of the above definition. Given a sequence of functions $v_m: Y_m \to \R$, $v:Y \to \R$, we say that $v_m$ {\em  uniformly converges} to $v$ when $Y_m \to Y$  if
\[\lim v_m(y_m)= v(y) \qquad\hbox{for any $y \in Y$, $y_m \in Y_m$ with $y_m \to y$.}\]
\end{Definition}

\smallskip
\begin{Remark}\label{prepremax}  Assume that $v_m$ {\em  uniformly converges} to $v$ when $Y_m \to Y$. The above definition implies that if  a subsequence $z_{m_n} \in Y_{m_n}$ converges to $y \in Y$, then
\[\lim v_{m_n}(z_{m_n})= v(y).\]
In fact, since $Y_m \to Y$ there is a sequence $y_m \in Y_m$ converging to $y$, we define
\[ \wtd y_m = \left \{ \begin{array}{cc}
                y_m  & \quad\hbox{if $m \neq m_n$ for all $n$} \\
                z_{m_n} & \quad\hbox{if $m = m_n$ for all $n$}
              \end{array} \right . \]
It is clear that $\wtd y_m \to y$ and consequently $v_m(\wtd y_m) \to v(y)$ so that
$v_{m_n}(z_{m_n})$, which is a subsequence of $v_m(\wtd y_m)$ converges to $v(y)$ as well.
\end{Remark}
\smallskip
\begin{Proposition}\label{premax} Assume that the  sequence $v_m: Y_m \to \R$  uniformly converges to a function  $v: Y \to \R$, then $v$ is continuous.
\end{Proposition}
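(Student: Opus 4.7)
My plan is to prove continuity by a standard diagonal extraction, using Remark~\ref{prepremax} as the link between uniform convergence on $Y_m \to Y$ and sequential continuity of $v$. Fix $y \in Y$ and a sequence $y^{(n)} \in Y$ with $y^{(n)} \to y$; the goal is to show $v(y^{(n)}) \to v(y)$.

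First I would exploit the hypothesis $Y_m \to Y$ applied at each fixed $y^{(n)}$: there is a sequence $\{y_m^{(n)}\}_{m \in \N} \subset Y_m$ with $y_m^{(n)} \to y^{(n)}$ as $m \to \infty$. By the uniform convergence of $v_m$ to $v$ in the sense of Definition~\ref{deftoy}, applied at the point $y^{(n)}$, we have $v_m(y_m^{(n)}) \to v(y^{(n)})$ as $m \to \infty$. Therefore, for each $n$ I can choose an index $m_n$ (taken strictly increasing in $n$) large enough that
\[
d(y_{m_n}^{(n)}, y^{(n)}) < 1/n \qquad \hbox{and} \qquad |v_{m_n}(y_{m_n}^{(n)}) - v(y^{(n)})| < 1/n.
\]

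Now form the diagonal sequence $z_n := y_{m_n}^{(n)} \in Y_{m_n}$. The triangle inequality yields
\[
d(z_n, y) \leq d(y_{m_n}^{(n)}, y^{(n)}) + d(y^{(n)}, y) < 1/n + d(y^{(n)}, y) \to 0,
\]
so $z_n \to y$ in $Y$ along the subsequence of indices $m_n$. Applying Remark~\ref{prepremax} to this subsequence, we conclude $v_{m_n}(z_n) \to v(y)$.

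Finally, combining with the second inequality in the choice of $m_n$,
\[
|v(y^{(n)}) - v(y)| \leq |v(y^{(n)}) - v_{m_n}(z_n)| + |v_{m_n}(z_n) - v(y)| < 1/n + |v_{m_n}(z_n) - v(y)| \to 0,
\]
so $v(y^{(n)}) \to v(y)$, proving continuity of $v$ at $y$. The only delicate point is ensuring that the diagonal indices $m_n$ are strictly increasing so that Remark~\ref{prepremax} (which concerns a subsequence $z_{m_n} \in Y_{m_n}$) applies; this is easily arranged when extracting the $m_n$'s.
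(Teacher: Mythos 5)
Your proof is correct and follows essentially the same route as the paper: a diagonal extraction choosing $m_n$ so that $d(y_{m_n}^{(n)}, y^{(n)}) < 1/n$ and $|v_{m_n}(y_{m_n}^{(n)}) - v(y^{(n)})| < 1/n$, followed by an application of Remark \ref{prepremax} to the diagonal sequence. The paper's argument is identical in substance, merely more terse.
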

\begin{proof}    Let us fix  $y_0 \in Y$ and a
sequence $z_n$
 converging to it. Taking into account that  $Y_m \to Y$ and  exploiting the   definition of  uniform limit,   we can  select  indices  $m_n$, with $m_n \to + \infty$, such that $y_{m_n} \in Y_{m_n}$ and
\[ d(y_{m_n}, z_n)  < \frac 1n,  \quad |v(z_n)
- v_{m_n}(y_{m_n})| < \frac 1n. \]
This implies by   Remark \ref{prepremax} that
\[ \lim_n v(z_n) = v(y_0),\]
 as desired.
 \end{proof}

\bigskip

The stability of maxima and maximizers (resp. minima and minimizers)
is one the main properties of the above introduced
convergence.

\medskip

\begin{Proposition}\label{max} Let $U$ be  a  compact subset of $Y$  with $Y_m \cap U \to U$, and   $v_m:  Y_m \cap U \to \R$  a sequence  uniformly converging to a function $v: U  \to \R$.
If  $y_m$ is a  sequence of  maximizers (resp. minimizers) of $v_m$ in $U \cap Y_m$, then any  of its limit point is a maximizer (resp. minimizer) of  $v$ in $U$. In addition
\[ \max_{U \cap Y_m} v_m \to \max_U v\]
\[ \hbox {(resp. $\min_{U \cap Y_m} v_m \to \min_U v$)}\]
\end{Proposition}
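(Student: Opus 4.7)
The plan is to combine the uniform convergence hypothesis with compactness of $U$, essentially running a standard $\Gamma$-convergence style argument adapted to this setting. I will do the maximizer case; the minimizer case is symmetric.

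First I would observe that by Proposition \ref{premax} the limit $v$ is continuous on $U$, and since $U$ is compact, $v$ attains its maximum on $U$. Pick any maximizer $y^* \in U$ and set $M = v(y^*) = \max_U v$. Because $Y_m \cap U \to U$ in the sense of Definition \ref{abspaces}, there exists a sequence $z_m \in Y_m \cap U$ with $z_m \to y^*$. The uniform convergence of $v_m$ to $v$ (Definition \ref{deftoy}) then yields $v_m(z_m) \to v(y^*) = M$.

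Next I would exploit that $y_m$ is a maximizer of $v_m$ on $Y_m \cap U$ to get $v_m(y_m) \geq v_m(z_m)$ for every $m$, hence
\[
\liminf_m v_m(y_m) \geq M.
\]
On the other hand, $y_m \in U$ and $U$ is compact, so up to extracting a subsequence $y_{m_n} \to y^{**} \in U$. By Remark \ref{prepremax}, $v_{m_n}(y_{m_n}) \to v(y^{**})$, so combining with the previous inequality gives $v(y^{**}) \geq M$. Since $y^{**} \in U$, we must have $v(y^{**}) = M$, so $y^{**}$ is a maximizer of $v$ on $U$, which proves the statement about limit points.

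Finally, to upgrade to full convergence $\max_{U \cap Y_m} v_m = v_m(y_m) \to M$, I would use a subsequence--subsequence argument: every subsequence of $\{v_m(y_m)\}$ has a further subsequence along which $y_{m_n}$ converges to some limit point $y^{**} \in U$, which by the above is a maximizer, and along which $v_{m_n}(y_{m_n}) \to v(y^{**}) = M$. Since every subsequence has a further subsequence converging to the same limit $M$, the entire sequence converges to $M$. The minimizer case follows by applying the maximizer argument to $-v_m$. I do not anticipate a serious obstacle here; the only subtle point is the careful application of Remark \ref{prepremax} to guarantee that subsequential limits of $y_m$ inherit the correct functional limit, rather than merely applying the uniform convergence to a preassigned approximating sequence.
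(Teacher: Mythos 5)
Your proof is correct and follows essentially the same route as the paper's: continuity of $v$ via Proposition \ref{premax}, a recovery sequence $z_m \to y^*$ from $Y_m \cap U \to U$ to get the lower bound $\liminf_m v_m(y_m) \geq \max_U v$, and Remark \ref{prepremax} applied to a convergent subsequence of maximizers for the upper bound. Your explicit subsequence--subsequence step at the end only makes precise what the paper leaves implicit when it combines its two inequalities.
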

\begin{proof}  It is enough to prove the assertion for maximizers and maxima, the corresponding one for minimizers and minima can obtained arguing in the same way. We denote by  $y_0 \in U$ a limit point of the $y_m$'s, we assume more precisely that a subsequence $y_{m_n}$ converges to $y_0$.
 We have, by the very definition of  uniform limit  and Remark \ref{prepremax}, that
\begin{equation}\label{stab1}
   \lim_n \max_{U \cap Y_{m_n}} v_{m_n} = \lim v_{m_n}(y_{m_n}) = v(y_0)
\leq \max_U v.
\end{equation}
 On the other side, a maximizer  $z_0$ of $v$ in $U$ does exist because $U$ is compact and $v$ continuous in $U$ by Proposition \ref{premax}. Thanks to  the condition $Y_m \cap U \to U$, we further find  $z_m \in Y_m \cap U$   converging to $z_0$  and so satisfying
 \[\lim_m v_m(z_m)= v(z_0).\]
 We have
 \begin{equation}\label{stab2}
   \max_U v = v(z_0)= \lim_n v_m(z_m) \leq \liminf_m \max_{U \cap Y_m} v_m.
\end{equation}
 The assertion  follows combining \eqref{stab1} and
 \eqref{stab2}.
 \end{proof}

\bigskip

\section{How to read figures}\label{readfigure}
Let us explain how to read the  figures in all the paper.  As an example we consider the two plots in the top of Figure \ref{fig:test2}.
Let us start with the plot in the top left.
\begin{enumerate} 
\item The network is embedded in $\R^3$ and the three-dimensional box drawn $[-2,2]\times [-2,2]\times[-2,9]$ helps a three-dimensional visualization.

\item Accordingly, a 3d view of  the  solution $v_{\Delta}(\cdot,T)$ has been displayed in the box, together with the target point $(\bar x, v_{\Delta}(\bar x,T))$, indicated as a green circle.
\item The color  indicates the value $v_{\Delta}(\cdot,T)$ according to the color scale  on the right. 
\item The viewpoint has been chosen in such a way that one sees the larger values of $v_{\Delta}(x)$ in the background, and the smaller ones in the foreground.
\item Since it is not easy to detect all the local minima of $v_{\Delta}(\cdot)$ in the figure, we have put in evidence two of them, vertices $(-2,0)$ and $(-1,0)$, in the zoomed-in Figure \ref{fig:zoom}.
 \end{enumerate}
Next, we explain the  top plot in the right of Figure \ref{fig:test2}.
\begin{enumerate} 
\item The picture is two dimensional as indicated by the 2d box $[-2,2]\times [-2,2]$. 
\item The drawn function is the projection of the solution $v_{\Delta}(\cdot,T)$ in the $x_1x_2$ plane.
\item  The color  indicates the value $v_{\Delta}(\cdot,T)$ according to the color scale  on the right.
\item  The target point $\bar x$  is visualised as a green circle.

 \end{enumerate}
\begin{figure}[htbp]
\centering
		\includegraphics[width=0.6\textwidth]{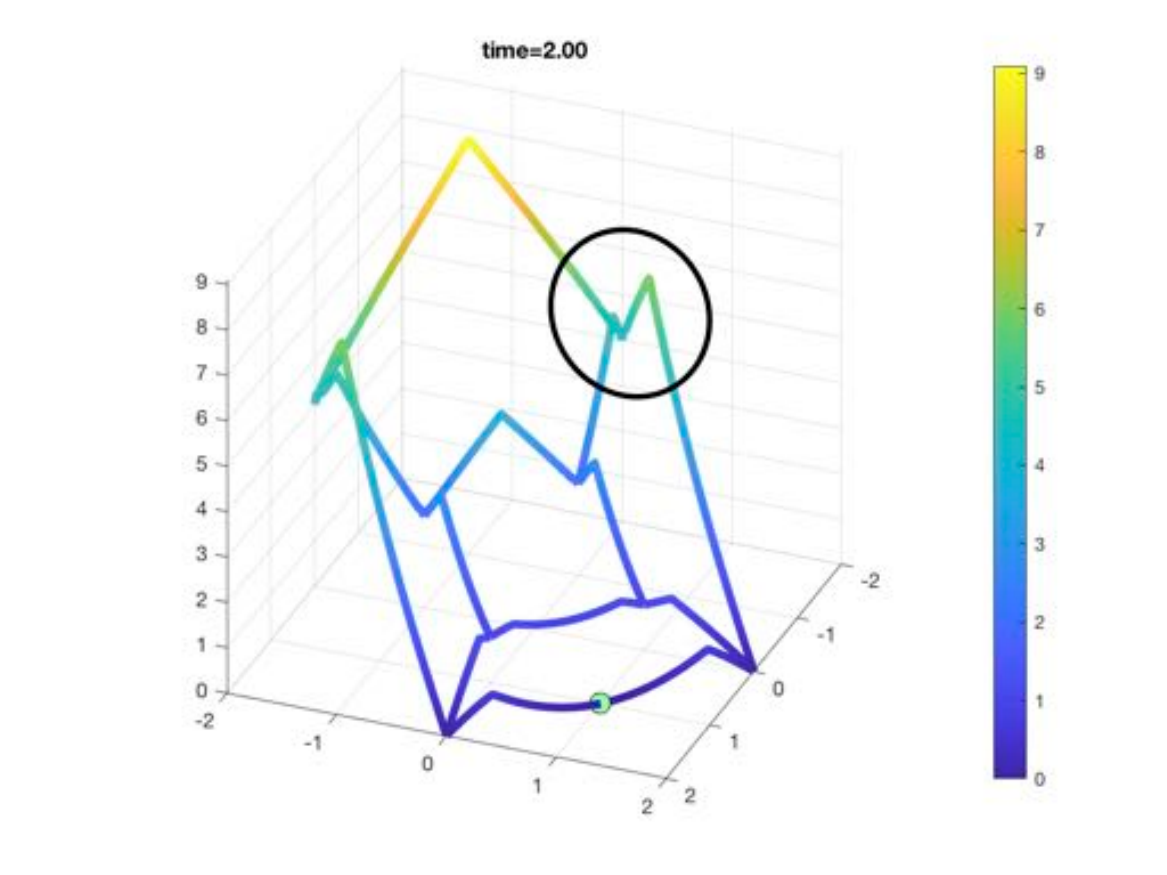}\includegraphics[width=0.6\textwidth]{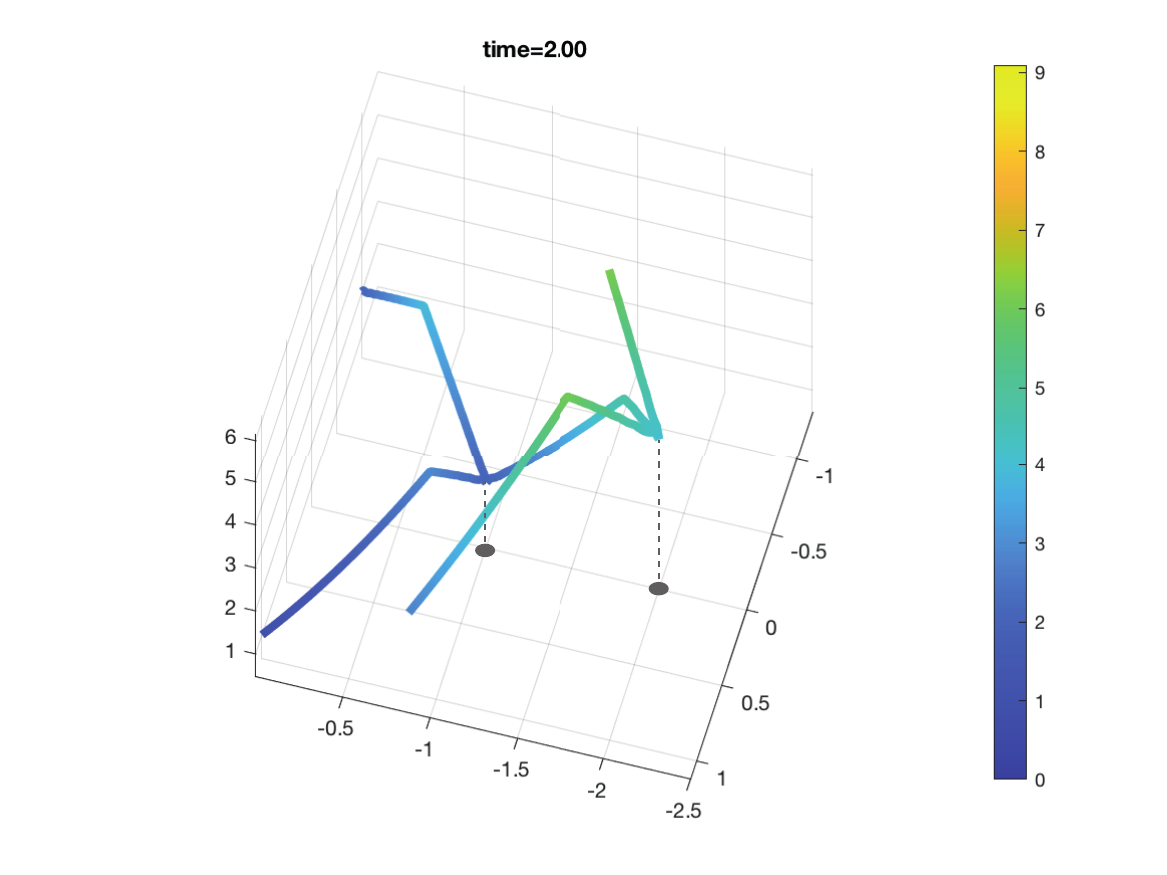}
\caption{ 
The figure on the right is a zoomed-in display of the region shown in the black circle of the one of the left. We have also made a rotation on the right  in order to better show the minima.
 \label{fig:zoom}}
\end{figure} 
\bigskip


\bibliography{sn-bibliography}

\end{document}